\documentclass[11pt]{article}
\newcommand{\Cay}{\mathsf{Cay}}
\usepackage{todonotes}
\usepackage{enumitem}

\newenvironment{poc}{\begin{proof}[Proof of claim]}{\end{proof}}

\usepackage{hyperref}
\usepackage{amsmath, amssymb, amsthm, bm, mathtools, enumitem, caption}

\usepackage{adjustbox}

\usepackage{graphicx, tikz, geometry}
\usepackage{microtype}

\usepackage{tikz}
\usetikzlibrary{positioning,calc,arrows.meta}
\usetikzlibrary{shapes.geometric}

\newtheorem{theorem}{Theorem}[section]
\newtheorem{lemma}[theorem]{Lemma}
\newtheorem{fact}[theorem]{Fact}
\newtheorem{corollary}[theorem]{Corollary}
\newtheorem{proposition}[theorem]{Proposition}

\newtheorem*{conjecture*}{Conjecture}
\newtheorem{problem}[theorem]{Problem}
\newtheorem{claim}[theorem]{Claim}

\theoremstyle{definition}
\newtheorem{definition}[theorem]{Definition}

\theoremstyle{remark}

\DeclarePairedDelimiter\abs{\lvert}{\rvert}

\DeclarePairedDelimiter\ceil{\lceil}{\rceil}

\newcommand{\R}{\mathbb R}

\newcommand{\eps}{\varepsilon}

\newcommand{\EE}{\mathbb E}

\newcommand{\PP}{\mathbb P}
\newcommand{\TT}{\mathbb T}

\newcommand{\cI}{\mathcal I}

\newcommand{\cL}{\mathcal L}

\newcommand{\ba}{{\bm a}}
\newcommand{\bv}{{\bm v}}

\newcommand{\bx}{{\bm x}}
\newcommand{\by}{{\bm y}}
\newcommand{\bz}{{\bm z}}

\newcommand{\CC}{\mathbb C}
\newcommand{\FF}{\mathbb F}

\newcommand{\RR}{\mathbb R}
\newcommand{\SSS}{\mathbb S}
\newcommand{\ZZ}{\mathbb Z}
\renewcommand{\abs}[1]{\left\vert #1 \right \vert}

\newcommand{\norm}[1]{\lVert #1 \rVert}

\usepackage[noabbrev,capitalize]{cleveref}

\crefname{theorem}{Theorem}{Theorems}
\crefname{lemma}{Lemma}{Lemmas}
\crefname{corollary}{Corollary}{Corollaries}
\crefname{equation}{}{}

 \usepackage{xcolor}

\DeclareMathOperator{\dist}{dist}

\title{Chromatic thresholds for linear equations and recurrence}
\author{Hong Liu\thanks{Extremal Combinatorics and Probability Group (ECOPRO), Institute for Basic Science (IBS),  Daejeon, South Korea. Supported by the Institute for Basic Science (IBS-R029-C4). Email: \texttt{hongliu@ibs.re.kr}.}
\and Zhuo Wu\thanks{Departament de Matemàtiques, Universitat Politècnica de Catalunya (UPC),
Carrer de Pau Gargallo 14, 08028 Barcelona, Spain. Z. Wu acknowledges the bilateral AEI+DFG research project PCI2024-155080-2: SRC-ExCo – Structure, Randomness and Computational Methods in Extremal Combinatorics, and the PID2023-147202NB-I00 (COCOA: COntemporary COmbinatorics and its Applications), all funded by MICIU/AEI/10.13039/501100011033. Email: \texttt{zhuo.wu@upc.edu}.}
\and Ningyuan Yang\thanks{School of Mathematical Sciences, Fudan University, Shanghai, China, and Extremal Combinatorics and Probability Group (ECOPRO), Institute for Basic Science (IBS), Daejeon, South Korea. Email: \texttt{nyyang23@m.fudan.edu.cn}.}
\and Shengtong Zhang\thanks{Department of Mathematics, Stanford University, CA, USA. Email: \texttt{stzh1555@stanford.edu}.}}

\date{}

\begin{document}
\maketitle
\begin{abstract}
Let $\cL:\sum_{i=1}^k c_i x_i=0$ be a homogeneous linear equation with $k\ge3$. For an $\cL$-solution-free set $A\subseteq\FF_p$, we study how arithmetic avoidance constrains the global colorability of the Cayley graph $\Cay(\FF_p,A)$. We define the \emph{chromatic threshold} $\delta_\chi(\cL)$ as the infimum of the densities that force $\chi(\Cay(\FF_p,A))$ to be bounded uniformly over the prime $p$. We prove that
\[
\delta_\chi(\cL)=0
\quad\Longleftrightarrow\quad
\text{some subcollection of at least three coefficients of $\cL$ sums to zero.}
\]
This criterion is strictly intermediate between Roth's density criterion, which requires all coefficients to sum to zero, and the Ramsey--Tur\'an criterion, which requires merely a nonempty zero-sum subcollection. In particular, a canceling pair alone is insufficient.

The positive direction combines supersaturation for the balanced subequation with Fourier analysis and a Bohr-set coloring. For the converse, we construct dense solution-free generators with unbounded chromatic number. The main new ingredient is a quantitative lower bound for Cayley graphs on $\ZZ_p^n$ generated by a Hamming ball around the all-ones vector, valid for every prime $p$. We obtain it by introducing a generalized Kneser graph that embeds into the Cayley graph and applying an equivariant Borsuk--Ulam type obstruction. This answers a question of Griesmer and places the classification in the broader hierarchy of measurable, topological, and Bohr recurrence.
\end{abstract}

\section{Introduction}

Roth-type theorems ask when positive density forces a prescribed linear pattern. In this paper we study a finer question: when a dense set avoids the pattern, how complicated can its difference geometry still be? We measure this complexity by using the set as the generator of a Cayley graph. A proper coloring of $\Cay(\FF_p,A)$ is precisely a partition of $\FF_p$ into classes $V$ satisfying $((V-V)\setminus\{0\})\cap A=\varnothing$; thus bounded chromatic number is a global structural conclusion, substantially stronger than the existence of one large difference-avoiding set.

Fix a homogeneous linear equation
$\cL:\sum_{i=1}^k c_i x_i=0,$ where $k\ge3$ and $c_1,\dots,c_k\in\ZZ\setminus\{0\}.$
A set $A\subseteq\FF_p$ is \emph{$\cL$-solution-free} if it contains no solution with pairwise distinct coordinates. The basic density problem was settled by Roth: positive density forces such a solution exactly for translation-invariant equations.

\begin{theorem}[\cite{roth1954certain}]\label{thm:roth}
Let $\cL:\sum_{i\in[k]}c_i x_i=0$ be a homogeneous linear equation with $k\ge 3$ and $c_1,\dots,c_k\in\ZZ\setminus\{0\}$. Let further $p$ be a prime. Then the following are equivalent:
\begin{itemize}
\item[(a)] Every $\cL$-solution-free set $A\subseteq \FF_p$ has size $o(p)$.
\item[(b)] $\sum_{i\in[k]}c_i=0$.
\end{itemize}
\end{theorem}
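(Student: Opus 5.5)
The equivalence has two directions, and I would prove them by quite different means: a Fourier density-increment argument for (b)$\Rightarrow$(a), and an explicit dense construction for the converse.

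\textbf{(b)$\Rightarrow$(a).} Assume $\sum_i c_i=0$ and fix $\delta>0$. I will show that any $A\subseteq\FF_p$ with $|A|\ge\delta p$ and $p$ large contains a solution with pairwise distinct coordinates. For $f=1_A$, count solutions via
\[
N=\sum_{c_1x_1+\dots+c_kx_k=0}\prod_i f(x_i)=p^{k-1}\sum_{\xi\in\FF_p}\prod_{i}\widehat f(c_i\xi),
\]
with normalized Fourier coefficients. The $\xi=0$ term contributes $\delta^k p^{k-1}$, and I would handle the remaining terms with the usual dichotomy.

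\emph{Uniform case.} Suppose $\max_{\xi\neq0}|\widehat f(\xi)|\le\eta$. Apply H\"older and Parseval to two of the factors, and bound the other $k-2$ trivially. This shows the nonzero frequencies contribute at most $\eta^{k-2}\delta p^{k-1}$ in absolute value. Hence $N\ge\tfrac12\delta^kp^{k-1}$ once $\eta$ is small in terms of $\delta$.

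\emph{Structured case.} Otherwise $\widehat f$ has a large nonzero coefficient. This gives a long arithmetic progression $P$ (length tending to infinity with $p$, obtained by Dirichlet approximation) on which $A$ has density at least $\delta+c\delta^{k}$. Because $\sum c_i=0$, the equation is invariant under affine maps $x\mapsto a+dx$. So $A\cap P$ rescales to a subset of an interval of $\ZZ$, and the argument can be iterated there via embedding into $\ZZ_{p'}$ with a suitable prime $p'$. The density can only increase boundedly many times, so the iteration terminates in the uniform case at a scale still tending to infinity.

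\textbf{Distinct coordinates.} Solutions with $x_i=x_j$ for some $i\neq j$ number only $O(p^{k-2})$, which is negligible against $\Omega_\delta(p^{k-1})$. The same holds at every stage of the iteration.

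This direction is the main technical obstacle. The density-increment bookkeeping must ensure the final progression length still tends to infinity, and the transfer between $\ZZ$ and $\FF_{p'}$ must be done without wraparound. For the wraparound issue I would pad the interval by a factor depending on $\max|c_i|$. Alternatively, one could cite Roth's theorem in the form for invariant equations, or use the arithmetic removal lemma to obtain $\Omega(p^{k-1})$ solutions directly.

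\textbf{(a)$\Rightarrow$(b).} Assume $s:=\sum_i c_i\neq0$. I would construct a solution-free set of positive density, via one of two constructions.
\begin{itemize}
\item \emph{Interval construction.} Take $A=\{x: x\equiv 1\pmod m\}\cap[1,p/M]$, with $m=|s|+1$ and $M$ large in terms of $\sum|c_i|$. For $x_i\in A$, the sum $\sum c_ix_i$ has absolute value below $p$, so vanishing mod $p$ forces vanishing in $\ZZ$. But the sum is $\equiv s\not\equiv0\pmod m$, a contradiction. This $A$ has density about $1/(mM)$, a constant.
\item \emph{Interval-pair construction.} Write $\sum_i c_ix_i = s\cdot t+\sum_i c_i(x_i-t)$ for a point $t$ near $p/(2|s|)$ whose image $st$ is near $p/2$. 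Take $A$ to be a short interval around $t$, of length $\varepsilon p$ with $\varepsilon$ small. Then $\sum_i c_ix_i$ lies within $\varepsilon p\sum|c_i|$ of $p/2$, hence is nonzero mod $p$. This gives density $\varepsilon>0$.
\end{itemize}
Either construction contradicts (a), which completes the proof.
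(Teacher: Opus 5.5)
The paper gives no proof of this theorem; it is quoted from Roth. So the comparison is with the machinery the paper uses nearby. Your argument is correct in outline.

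\textbf{Direction (a)$\Rightarrow$(b).} Both of your constructions work. In the first, $M>\sum_i|c_i|$ prevents wraparound, and $0<|s|<m$ gives $\sum_ic_ix_i\equiv s\not\equiv 0\pmod m$. In the second, $\varepsilon\sum_i|c_i|<1/2$ keeps the sum bounded away from $0$ modulo $p$. This is the same no-wraparound device the paper uses with $I_p=[p/(D+1),p/D]$ in Case~1 of \cref{sec:a-implies-b}.

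\textbf{Direction (b)$\Rightarrow$(a).} The paper's internal route is the one you list only as a fallback. \Cref{cor:Roth--supersaturation} (Green's arithmetic removal lemma applied to $c_1A,\dots,c_kA$) yields $\delta p^{k-1}$ solutions, and \cref{thm:upper--bound} discards the at most $k^2p^{k-2}$ degenerate ones. This is exactly the case $I=[k]$ in the proof of \cref{thm:b-implies-a-reduction}. Your main route, Roth's density increment, is more elementary and quantitative, with no removal lemma and no tower-type bounds, but it needs iterative bookkeeping. The removal route takes two lines and gives the supersaturated count the paper needs later anyway.

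\textbf{A step in your increment sketch that needs more than padding.} After the first increment, your set is $A'\subseteq[0,L)$ sitting in $\ZZ_{p'}$ with $p'\asymp_{\cL}L$. There $\widehat{1_{A'}}$ inherits from $1_{[0,L)}$ nonzero coefficients comparable to $\widehat{1_{A'}}(0)$. So the test $\max_{\xi\neq 0}|\widehat{1_{A'}}(\xi)|\le\eta$ fails even for a perfectly uniform $A'$. Moreover, a gain over the ambient density $|A'|/p'$ is no gain over the relative density $\delta'=|A'|/L$, so the iteration as written does not advance.

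\textbf{The standard fix.} Run the dichotomy on the balanced function $g=1_{A'}-\delta'1_{[0,L)}$ instead.
\begin{enumerate}
\item Telescoping compares the count for $A'$ with $\delta'^k$ times the number of solutions in $[0,L)^k$. The error is $O_{\cL}\bigl(\|\widehat g\|_\infty\, p'^{k-1}\bigr)$, bounding one factor by $\|\widehat g\|_\infty$ and two others by Cauchy--Schwarz and Parseval.
\item That number of solutions is $\gg_{\cL}L^{k-1}$. The padding $p'>L\sum_i|c_i|$ makes them integer solutions, and $\sum_ic_i=0$ puts the centre of the cube on the hyperplane.
\item If $\widehat g(\xi)$ is large, Dirichlet approximation and a partition of $[0,L)$ into short progressions give an increment $\delta'+c\eta$ on a subprogression of $[0,L)$.
\end{enumerate}
With this modification your iteration closes as you describe. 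It stops after $O_\delta(1)$ steps at a length $p^{c(\delta)}\to\infty$, where the $O(L^{k-2})$ degenerate solutions are negligible.
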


Roth's theorem is a sharp zero-density classification, but it does not distinguish among the many possible structures of positive-density solution-free sets. Our aim is to insert a chromatic layer between density forcing and partition regularity. See \cite{croot2017progression,green2005szemeredi,meshulam1995subsets} for representative results on linear equations in finite abelian groups.

\subsection{Roth meets Erd\H{o}s and Simonovits}

Studying $\cL$-solution-free sets can be viewed as an additive analogue of Tur\'an-type problems in extremal graph theory, which ask for the largest size of an $F$-free graph. It is thus natural to import graph-theoretic perspectives. The first such refinement of Roth's problem is Ramsey--Tur\'an in nature. For an abelian group $\Gamma$ and $A\subseteq\Gamma$, let $\Cay(\Gamma,A)$ be the Cayley digraph with vertex set $\Gamma$ and an arc $u\to v$ when $u\ne v$ and $v-u\in A$; all graph parameters below refer to its underlying simple graph. Buci\'c, Christoph, Kim, Lee, and Sivashankar \cite{bucić2025ramseyturanvariantrothstheorem} classified the equations for which a solution-free set is either sparse or leaves a linear-size independent set in its Cayley graph.

\begin{theorem}[\cite{bucić2025ramseyturanvariantrothstheorem}]\label{thm:bckls}
Let $\cL:\sum_{i\in[k]}c_i x_i=0$ be a homogeneous linear equation with $k\ge 3$ and $c_1,\dots,c_k\in\ZZ\setminus\{0\}$. Let further $p$ be a prime. Then the following are equivalent:
\begin{itemize}
\item[(a)] Every $\cL$-solution-free set $A\subseteq \FF_p$ with $\alpha(\Cay(\FF_p,A))=o(p)$ has size $o(p)$.
\item[(b)] There exists a non-empty subset of coefficients of $\cL$ whose sum is zero.
\end{itemize}
\end{theorem}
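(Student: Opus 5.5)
Both directions are handled separately; the difficult one is (b)$\Rightarrow$(a).

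\emph{(a)$\Rightarrow$(b), by contrapositive.} Suppose no nonempty subcollection of the $c_i$ sums to zero. We need dense $\cL$-solution-free sets $A\subseteq\FF_p$ with $\alpha(\Cay(\FF_p,A))=o(p)$. We take $A$ as the preimage of a short interval on a circle, since such sets are Bohr sets with a near-continuous structure.

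Fix a large integer $m$ and identify $\FF_p$ with $\{0,\dots,p-1\}$. Let $\phi:\FF_p\to\TT$ be the map $x\mapsto x/p$, and set
\[
A=\{x: \phi(x)\in I\},
\]
where $I$ is a small arc around a point $\theta$. Choose $\theta$ so that $\sum_i c_i x_i\not\equiv 0$ whenever every $x_i/p$ lies within $\eps$ of $\theta$. This requires $\sum c_i\theta\notin\ZZ$ up to error $\eps\sum|c_i|$. However, a wrapped sum mod $p$ could still vanish, so we must control the integer lift.

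This is the subtle point. For $\sum c_i x_i\equiv 0 \pmod p$ we need $\sum c_i x_i = tp$ for an integer $t$ with $|t|\le \sum|c_i|$. The condition that no subset sums to zero should let us choose $\theta$, and perhaps several frequencies, to rule out every such $t$. I would emulate BCKLS's argument here, since this direction is essentially their construction.

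For the independence number, we want $A$ to contain a difference of every set of size $\ge\eps p$. Intervals do not achieve this, because an arithmetic progression of difference $1$ with small span avoids differences near $\theta p$. So we use a random dilate $\lambda A$, or a union of boundedly many dilates, together with an averaging/Fourier argument. The goal is that every large set $V$ satisfies $(V-V)\cap A\neq\varnothing$. The cleanest version uses a Hamming-ball construction in higher dimension, $A=\{x:\ x_j/p\in I \text{ for many coordinates } j\}$ via a map $\FF_p\to\TT^d$, $x\mapsto(\lambda_1x/p,\dots,\lambda_dx/p)$. Sets avoiding $A$-differences then correspond to sets on the torus avoiding a "spherical" difference pattern, and the density of such sets tends to $0$ as $d\to\infty$ by a measure-concentration argument.

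\emph{(b)$\Rightarrow$(a).} Let $S$ be a nonempty zero-sum subset of coefficients.

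If $|S|=1$, this is impossible, since all $c_i\neq 0$.

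If $|S|\ge 3$, take a dense solution-free $A$. Supersaturation for the translation-invariant subequation gives many solutions of $\sum_{i\in S}c_ix_i=0$ in $A$. Fixing values for the remaining variables then forces their combination $\sum_{i\notin S}c_ix_i$ to avoid a dense set of values. Combined with density, this yields a large independent set.

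If $|S|=2$, say $c_1=-c_2$, the equation becomes $c_1(x_1-x_2)+\sum_{i\ge3}c_ix_i=0$. Solution-freeness then says $c_1(A-A)$ avoids $-\sum_{i\ge 3}c_iA$, a set of size $\Omega(p)$ by Cauchy--Davenport / Plünnecke. Hence $A-A$ is not everything, and by a Bogolyubov-type argument $A$ is correlated with a Bohr set. The contrapositive then shows that $\alpha=\Omega(p)$, for example by taking a Bohr-set piece $V$ with $(V-V)\cap A$ small and cleaning it.

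The main obstacle is the independence-number bound for the construction in the first direction. I would attempt it first via the torus/Hamming-ball concentration argument.
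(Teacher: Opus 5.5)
The paper does not prove this theorem. It is quoted from Buci\'c et al.\ and used only as a black box (its construction direction is invoked in Section~\ref{sec:a-implies-b}). Judged on its own, your proposal has a genuine gap in each direction, and the construction fails as designed.

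\textbf{The construction.} Your set $A=\{x:\lambda_jx/p\in I\text{ for most }j\le d\}$ is defined through $d$ frequencies. Every element of $A$ has at least one coordinate $\lambda_jx/p$ in $I$. Since $(\sum_ic_i)\theta$ is kept away from $\ZZ$, the arc $I$ avoids $0$; say $\|t\|_{\TT}>2\rho$ for all $t\in I$. Then the Bohr set $V=\{x:\|\lambda_jx/p\|_{\TT}\le\rho\ \forall j\le d\}$ satisfies $(V-V)\cap(A\cup -A)=\varnothing$ and $|V|\ge\rho^dp$, so $\alpha(\Cay(\FF_p,A))=\Omega_d(p)$. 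Concentration could only help as $d\to\infty$. But the density of $A$ is exponentially small in $d$, since most coordinates must fall in the short arc $I$, so $|A|\ge\eps p$ forces $d=O_\eps(1)$: the quantifiers are in the wrong order. Unions of boundedly many dilates of an interval fail for the same reason.

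There is a second warning sign. Your solution-freeness argument uses only $\|(\sum_ic_i)\theta\|_{\TT}>\eps\sum_i|c_i|$, i.e.\ $\sum_ic_i\ne0$. This already excludes every wrap-around value $t$, so there is no further subtlety, and the hypothesis that no subcollection sums to zero never enters. Yet for $x_1-x_2+3x_3=0$ every dense solution-free $A$ has the almost independent dilate $-\tfrac13A$ (see below), so no construction of this shape can succeed.

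A working construction must meet every bounded-rank Bohr neighbourhood of $0$, hence live at many scales, and the hypothesis is applied to the variables at the smallest scale, as in Rado's last-nonzero-digit colouring. For instance, take a prime $q\gg\sum_i|c_i|$, put $[a,b)=[\lfloor q/2\rfloor-1,\lfloor q/2\rfloor+1)$ and $L\approx\log_qp$. Let $A$ consist of those $x$ whose representative $\theta_x\in[0,1)$ of $x/p$ lies in $\bigcup_{2\le e\le L}[a,b)q^{-e}$; then $|A|\approx 2p/(q(q-1))$.
\begin{itemize}
\item Solution-freeness: in a putative solution, let $S$ be the indices of minimal scale $e$. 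The real number $\sum_ic_i\theta_{x_i}$ equals $(\sum_{i\in S}c_i)\lfloor q/2\rfloor q^{-e}+O(q^{-e}\sum_i|c_i|)$. This is nonzero exactly because $\sum_{i\in S}c_i\ne0$, and has modulus $<1$ because $e\ge2$.
\item Independence number: an independent set cannot meet both the $j$-th and the $(j+\lfloor q/2\rfloor)$-th $q^{-e}$-subarc of an arc of length $q^{1-e}$. So at each scale it occupies at most $(q+1)/2$ of the $q$ subarcs, giving $\alpha\le p\,(\tfrac{q+1}{2q}+o(1))^{L-1}=o(p)$.
\end{itemize}

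\textbf{The direction (b)$\Rightarrow$(a).} For $|S|\ge3$, ``combined with density, this yields a large independent set'' skips the entire argument. What is needed is that the set of $y$ for which $\sum_{i\in S}c_ix_i=y$ has many solutions in $A$ contains a Bohr neighbourhood $B$ of $0$. This is the Fourier estimate of Claim~\ref{cl:AcapB}: it splits at the large spectrum and controls the remaining frequencies by H\"older and Parseval, which is exactly where $|S|\ge3$ is used. It yields $|A\cap B|<k$, after which a smaller Bohr set induces a subgraph of bounded degree. Section~\ref{sec:b-implies-a} in fact obtains $\chi\le C$, hence $\alpha\ge p/C$.

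For $|S|=2$ your route cannot work. A density increment of $A$ on a Bohr set says nothing about $A$-differences inside a Bohr set. Degree-based cleaning of a Bohr set $V$ gives only $|V|/(2|A\cap(V-V)|+1)$, which is linear only if $|A\cap(V-V)|=O(1)$. But suppose the only zero-sum subcollections are pairs (e.g.\ $x_1-x_2+3x_3=0$). Then no argument can produce, for every dense solution-free $A$, a Bohr set $V$ of rank and radius bounded in terms of $\eps,\cL$ with $|A\cap V|=O_{\eps,\cL}(1)$. Otherwise the discretization of Section~\ref{sec:b-implies-a} would bound $\chi(\Cay(\FF_p,A))$, contradicting Theorem~\ref{thm:a-implies-b-reduction}.

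The independent set has to be built from $A$ itself. For $k=3$ and $c_2=-c_1$, take $V=-\tfrac{c_1}{c_3}A$. The relation $-\tfrac{c_1}{c_3}(a-a')=a''\in A$ means $c_1a+c_2a'+c_3a''=0$, so solution-freeness forces $a''\in\{a,a'\}$. Hence $\Cay(\FF_p,A)[V]$ has maximum degree at most $4$ once the vertex $0$ is removed. For $k\ge4$ a further idea is needed, and your sketch does not supply one.
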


A linear-size independent set is only a one-cell certificate; bounded chromatic number asks for a bounded partition of the entire ambient group into independent sets. This is the additive analogue of the chromatic-threshold problem of Erd\H{o}s and Simonovits \cite{erdos1973valence}. For a graph $H$, its chromatic threshold is
\begin{align*}
\delta_\chi(H)\coloneqq
 \inf \{d>0:~ &\exists~C=C(H,d)~\text{ such that for any $n$-vertex $H$-free graph $G$},\\
& \text{if $\delta(G)\ge dn$, then $\chi(G)\le C$}\}.
\end{align*}
This notion measures how much density is required, under an $H$-free constraint, to force bounded chromatic number. In a remarkable work, Allen, B{\"o}ttcher, Griffiths, Kohayakawa, and Morris \cite{allen2013chromatic} determined this parameter for every graph $H$; when $\chi(H)=r\ge3$, it is one of
$\frac{r-3}{r-2}$,  $\frac{2r-5}{2r-3}$, $\frac{r-2}{r-1}$.
See \cite{balogh2016hypergraphs,bourneuf2025denseneighborhoodlemmaapplications,kim2026stabilityminusculestructurechromatic,liu2024beyond,wu2025edgedensityminimumdegree} for related developments.

We transfer this question to linear equations by replacing minimum degree with the density of the Cayley generator.

\begin{definition}
Let $\cL:\sum_{i\in[k]}c_i x_i=0$ be a homogeneous linear equation with $k\ge 3$ and $c_1,\dots,c_k\in\ZZ\setminus\{0\}$. The \emph{chromatic threshold of $\cL$} is
\begin{align*}
\delta_\chi(\cL)\coloneqq
\inf \Bigl\{d >0:~ &\exists~C=C(\cL,d)~\text{such that for every prime $p$ and every}\\
&\text{$\cL$-solution-free $A\subseteq\FF_p$ with $|A|\ge dp$, one has }
\chi\bigl(\Cay(\FF_p,A)\bigr)\le C \Bigr\}.
\end{align*}
\end{definition}

Our main result gives a complete classification of the equations for which this threshold vanishes.

\begin{theorem}\label{thm:main0}
Let $\cL:\sum_{i\in[k]}c_i x_i=0$ be a homogeneous linear equation with $k\ge 3$ and $c_1,\dots,c_k\in\ZZ\setminus\{0\}$. Then the following are equivalent:
\begin{itemize}
\item[(a)] $\delta_\chi(\cL)=0$; equivalently, every $\cL$-solution-free set $A\subseteq \FF_p$ with $\chi(\Cay(\FF_p,A))=\omega(1)$ has size $o(p)$. 
\item[(b)] There exists a subset of coefficients of $\cL$ of size at least three whose sum is zero.
\end{itemize}
\end{theorem}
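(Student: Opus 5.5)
For (b)$\Rightarrow$(a), fix $S\subseteq[k]$ with $|S|\ge3$ and $\sum_{i\in S}c_i=0$, and write $\cL_S:\sum_{i\in S}c_ix_i=0$. By Roth's theorem (\cref{thm:roth}) together with a supersaturation argument, every $B\subseteq\FF_p$ with $|B|\ge\eta p$ contains $\Omega_\eta(p^{|S|-1})$ solutions of $\cL_S$. We use this balanced subequation to force solutions of $\cL$ itself.

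Let $A$ be $\cL$-solution-free with $|A|\ge dp$. The first step is to show that $A$ has a large nontrivial Fourier coefficient. Pick a coordinate $j\notin S$; if $S=[k]$ we instead split off one coefficient of $S$. Freeze the remaining variables outside $S$ to values $a_i\in A$. The set $A$ must then avoid the shifted equation $\sum_{i\in S}c_ix_i=-\sum_{i\notin S}c_ia_i=:t$, and the shifts $t$ range over a set of size $\Omega(p)$ as the $a_i$ vary. Counting solutions of $\sum_{S}c_ix_i=t$ in $A$ by Fourier analysis, the main term is $|A|^{|S|}/p$, and it must be cancelled by the error term for many $t$. Since $|S|\ge3$, there are at least three factors, which lets us bound the error by $\max_{r\ne0}|\widehat{1_A}(r)|$ times $L^2$ norms. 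We conclude that $\sum_{r\ne0}|\widehat{1_A}(c_ir)|^{|S|}$, suitably weighted, is $\gg_d p^{|S|}$. Iterating this density increment in the spirit of Bogolyubov, the conclusion should be that $A$ avoids a Bohr set: there is a Bohr set $B(R,\rho)$ with $|R|,\rho^{-1}=O_d(1)$ whose difference set $B-B$ misses $A\setminus\{0\}$. I would get this by showing that any $x\in A$ lying near $0$ in the Bohr topology creates a solution. If $x\in A\cap B(R,\rho)$, then $x$, combined with the many $\cL_S$-configurations supplied by supersaturation in a dense piece of $A$, perturbs a near-solution into an exact solution with distinct coordinates.

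Given such a Bohr set, we colour $\FF_p$ by the cell of $\TT^{|R|}$ of side $\rho/2$ containing $(rx/p)_{r\in R}$. This uses $O_d(1)$ colours, and any two vertices in the same class differ by an element of $B(R,\rho)$, which avoids $A$. Hence $\chi(\Cay(\FF_p,A))\le C(\cL,d)$. I expect the main obstacle in this direction to be the step producing a Bohr set entirely free of $A$, rather than merely one of small density in $A$. That step needs $|S|\ge3$, so that a single element of $A$ can be absorbed: with a canceling pair only, $c_1x_1+c_2x_2$ has too few degrees of freedom.

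For (a)$\Rightarrow$(b), assume no subset of at least three coefficients sums to zero. We must build $\cL$-solution-free $A$ with $|A|\ge dp$ and unbounded chromatic number, for some fixed $d>0$. The construction lifts from $\ZZ_p^n$ (or $\ZZ_m^n$). Take $A$ to be the preimage, under a map $x\mapsto(\lfloor m\theta_jx/p\rfloor)_{j\le n}$ with generic frequencies, of a Hamming ball of radius $\epsilon n$ around the all-ones vector $\mathbf 1\in\ZZ_m^n$. The coefficients of $\cL$ applied to nearly-$\mathbf 1$ vectors give approximately $(\sum_{i\in T}c_i)\mathbf 1$ on coordinates where the chosen subset $T$ agrees, so a solution would force some subset sum to vanish. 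Subsets of size at least three are excluded by hypothesis, and we choose $m$ and the frequencies to kill the cases $|T|\le2$, including canceling pairs, using distinctness of coordinates. The density is a positive constant depending on $\epsilon$ and $m$.

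The key lower bound is that $\chi(\Cay(\ZZ_p^n,\text{Ball}(\mathbf 1,\epsilon n)))\to\infty$ as $n\to\infty$. I would prove it by embedding a generalized Kneser graph on $\ZZ_p$-labelled partial vectors and applying a $\ZZ_p$-equivariant Borsuk--Ulam (Dold) theorem to the associated box complex, which gives connectivity growing with $n$. This is the hardest part. Transferring the bound back to $\FF_p$ is a Bohr-approximation argument.
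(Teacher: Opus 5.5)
The decisive gap is in (a)$\Rightarrow$(b): the set you construct is not dense. Up to equidistribution, its density is that of the Hamming ball $\{\bx\in\ZZ_m^n:\mathrm d(\bx,\bm 1)\le\epsilon n\}$. That density is $\PP(\mathrm{Bin}(n,1-\frac1m)\le\epsilon n)=e^{-\Omega(n)}$ for $\epsilon<1-\frac1m$, and your heuristic needs $\epsilon$ small anyway. Meanwhile the Kneser/Dold lower bound grows only with $n$, and letting $m$ grow only makes the ball sparser. So the density tends to $0$ exactly when the chromatic number tends to infinity, and $\delta_\chi(\cL)>0$ does not follow.

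This is not a matter of tuning parameters. Every element of $A$ is nearly $\bm 1$, so on most coordinates all $k$ entries equal $1$, and a solution only forces the full sum $\sum_ic_i$ to vanish. Your argument therefore never really uses the hypothesis on subsets of size at least three. If it produced dense sets, it would also give $\delta_\chi>0$ for $x_1-2x_2+3x_3-4x_4=0$, which satisfies (b), contradicting (b)$\Rightarrow$(a).

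There is also a separate problem with the discretization. The coarse cells $\lfloor m\theta_jx/p\rfloor=1$ only confine $\sum_\ell c_\ell\theta_jx_\ell/p$ to an interval of length about $\sum_\ell|c_\ell|/m$. For, e.g., $2x_1-3x_2+2x_3=0$ that interval contains $0$. The paper's generator is instead a near-maximal level set of a fine weighted norm; only the embedded vertices sit on a coarse grid.

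The missing idea is to keep the sparse, high-chromatic, $\cL$-free piece $E$, which indeed needs only $\sum_ic_i\neq0$, and adjoin a separate dense piece $F$. This is where the hypothesis enters. The paper places $F$ inside $[p/(D+1),p/D]$ with $D=\sum_i|c_i|$. Then in any solution, the index set $J$ carried by $F$ must satisfy $\sum_{j\in J}c_j=0$. By hypothesis $J$ is a canceling pair meeting $\{1,2\}$, and this case is excluded by the extension condition $(c_1F-c_1F)\cap(c_3E+\dots+c_kE)=\varnothing$.

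Making $F$ dense under that condition is the real work. In $\ZZ_{p_1\cdots p_n}$, a weighted norm places $c_3E_0+\dots+c_kE_0$ above $n-q^2D\sqrt n$. For $F_0=\{y:\|\pm c_1y\|_C\le n/2-q^2D\sqrt n\}$, the triangle inequality places $c_1F_0-c_1F_0$ below $n-2q^2D\sqrt n$, and a two-dimensional Berry--Esseen estimate bounds the density of $F_0$ from below. When there is no nonempty zero-sum subset at all, the paper uses the Ramsey--Tur\'an theorem instead.

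There is also a genuine error in (b)$\Rightarrow$(a). Your target, a bounded-complexity Bohr set $B$ with $(B-B)\cap A\subseteq\{0\}$, is attainable. But the step ``any $x\in A\cap B(R,\rho)$ creates a solution'' is false once $k-|S|\ge2$. The variables outside $S$ must be filled with \emph{distinct} elements of $A$ whose weighted sum is Bohr-small, and one element cannot occupy two slots. For instance, take $x_1+x_2-2x_3+x_4+x_5=0$ and $A=I\cup\{1\}$, with $I$ an interval of length $\eta p$ around $p/3$ and $\eta$ small. This $A$ is solution-free, yet $1\in B(R,\rho)$ whenever $R\subseteq\{-K,\dots,K\}$ and $p>K/\rho$, in particular for $R$ the large spectrum of $A$.

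What the Fourier argument actually yields is Claim~\ref{cl:AcapB}: $|A\cap B|<k$ for $B=B(c_s^{-1}\mathrm{Spec}_\nu(A),\rho)$. You place $|J|$ distinct elements of $A\cap B$ in the variables outside $S$, so that $y=-\sum_{j\in J}c_jx_j$ is Bohr-small. You then compare the number $N(y)$ of solutions of $\sum_{i\in S}c_ix_i=y$ in $A^S$ with $N(0)\ge\delta p^{|S|-1}$. In units of $p^{|S|-1}$, spectral frequencies contribute at most $|\Gamma|\cdot2\pi\rho\sum_{j\in J}|c_j|$. The remaining frequencies contribute at most $2\nu$ by H\"older and Parseval, which is exactly where $|S|-1\ge2$ is used.

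This is a single comparison, with no preliminary large-coefficient step and no Bogolyubov-type iteration. To conclude, you can either note that each class of the Bohr coloring induces a graph of maximum degree at most $2k-2$, as the paper does, or add at most $k-1$ characters to $R$ to expel the exceptional elements, which recovers your Bohr set. For $S=[k]$ no splitting is needed: supersaturation already rules out dense solution-free sets once $p$ is large.
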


A notable feature of \cref{thm:main0} is that the dichotomy is governed by a zero-sum subcollection of size at least \emph{three}: a canceling \emph{pair} of coefficients alone does not force bounded chromatic number. Thus, \cref{thm:main0} pinpoints an intermediate regime between the Roth and Ramsey--Tur\'an settings: the class of equations with vanishing chromatic threshold lies strictly between the Roth-degenerate and Ramsey--Tur\'an-degenerate classes, as illustrated in Figure~\ref{figure 1}.

The conclusion is genuinely global. For every fixed density $\varepsilon>0$, a dense $\cL$-solution-free set in the vanishing-threshold regime yields a partition
\[
\FF_p=V_1\sqcup\cdots\sqcup V_C,
\qquad C=C(\varepsilon,\cL),
\qquad ((V_i-V_i)\setminus\{0\})\cap A=\varnothing
\]
for every $i$. 

\begin{figure}[ht]
\centering
{
\begin{tikzpicture}

\node[draw=black, fill=blue!15, rectangle,
      rounded corners=1.5cm,
      minimum width=12cm, minimum height=4.6cm,
      align=center, inner sep=0pt] (outer) at (0,0) {};

\node[draw=black, fill=green!15, rectangle,
      rounded corners=1.0cm,
      minimum width=9cm, minimum height=3.2cm,
      align=center, inner sep=0pt] (middle) at (-0.6,-0.4) {};

\node[draw=black, fill=yellow!15, rectangle,
      rounded corners=0.5cm,
      minimum width=6cm, minimum height=1.8cm,
      align=center, inner sep=0pt] (inner) at (-1.2,-0.8) {};

\node[align=center] at ([shift={(0.6,1.8)}]outer.center) {
    Ramsey--Tur\'an criterion\\
    e.g.\ $x_1-x_2+3x_3=0$
};

\node[align=center] at ([shift={(0.3,1.0)}]middle.center) {
    $\delta_{\chi}(\cL)=0$\\
    e.g.\ $x_1-2x_2+3x_3-4x_4=0$
};

\node[align=center] at ([shift={(0,0)}]inner.center) {
    Roth criterion\\
    e.g.\ $x_1-3x_2+2x_3=0$
};

\end{tikzpicture}
}
\caption{The strict hierarchy between Roth's density criterion, vanishing chromatic threshold, and the Ramsey--Tur\'an criterion.}
\label{figure 1}
\end{figure}

\subsection{Applications in topological dynamics}

The high-chromatic Cayley graphs required for the converse direction arise naturally in recurrence theory. This connection both motivates the construction and leads to a second main contribution of the paper.
Let $\Gamma$ be a discrete abelian group. Following Katznelson \cite{katznelson}, we say that a set $S\subset\Gamma$ is
\begin{enumerate}[label=\textup{$(\mathrm{R}\arabic*)$}, ref={$(\mathrm{R}\arabic*)$}]
\item\label{it:density_recurrent} \emph{measurably recurrent} if for every $A\subseteq\Gamma$ of positive upper Banach density, $(A-A)\cap S\ne\varnothing$;
\item\label{it:topological_recurrent} \emph{topologically recurrent} if $\Cay(\Gamma,S)$ has infinite chromatic number;
\item\label{it:bohr_recurrent} \emph{Bohr recurrent} if the complement of $S$ contains no Bohr set.
\end{enumerate}
One has \ref{it:density_recurrent} $\Rightarrow$ \ref{it:topological_recurrent} $\Rightarrow$ \ref{it:bohr_recurrent}. The independence-number and chromatic-number conditions in \cref{thm:bckls,thm:main0} are finite-group analogues of the first two recurrence notions.

The reverse implication \ref{it:bohr_recurrent}$\Rightarrow$\ref{it:topological_recurrent} is Katznelson's question, originating in work of Veech \cite{veech1968equicontinuous} and F{\o}lner \cite{folner1954note}. It remains open in general; see \cite{katznelson,griesmer2023special,alweiss2025new} for background and recent developments.

In the other direction, Bergelson \cite{bergelson1987ergodic}, Furstenberg \cite{furstenberg1981recurrence}, and Ruzsa \cite{ruzsa1982uniform} asked whether \ref{it:topological_recurrent} implies \ref{it:density_recurrent} in $\ZZ$. K\v{r}\'i\v{z} \cite{kvrivz1987large} and Ruzsa \cite{ruzsa1985difference} answered negatively using Cayley graphs on $\ZZ_2^n$ generated by Hamming balls around the all-ones vector. Their construction simultaneously exhibits large chromatic number and a positive-density independent set.

Griesmer \cite{mathoverflow_hamming_chromatic} asked whether the high-chromatic part of this construction persists in odd characteristic. We answer this affirmatively, with a quantitative estimate.\footnote{A similar statement was claimed by Forrest in his thesis \cite{forrest1990recurrence}, but the argument on pp.~144--146 appears to be incorrect.} Write $\mathrm d(\cdot,\cdot)$ for Hamming distance.

\begin{theorem}\label{thm:cayley}
Let $p$ be a prime, $n$ a positive integer, and
$S\coloneqq\{\bx\in\ZZ_p^n:\mathrm d(\bx,\bm1)\le p\sqrt n\}.$
Then
\[
\chi\bigl(\Cay(\ZZ_p^n,S)\bigr)\ge\frac{\sqrt n}{p^3}.
\]
Moreover, as $n\to\infty$,
$\alpha\bigl(\Cay(\ZZ_p^n,S)\bigr)=\Omega_p(|\ZZ_p^n|).$
\end{theorem}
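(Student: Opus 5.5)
The plan is to prove the chromatic lower bound by building a generalized Kneser graph that embeds homomorphically into $\Cay(\ZZ_p^n,S)$, and then to bound its chromatic number with a $\ZZ_p$-equivariant Borsuk--Ulam (Dold/Dol'nikov-type) argument. Concretely, I would take as vertices certain vectors $\bx\in\ZZ_p^n$ that use each residue class $a\in\ZZ_p$ in a roughly balanced way. Equivalently, such a vertex is an ordered partition $[n]=X_0\sqcup\cdots\sqcup X_{p-1}$ with $X_a=\{i:x_i=a\}$. Two vertices $\bx,\by$ are adjacent in the Cayley graph exactly when $\by-\bx$ agrees with $\bm 1$ in all but at most $p\sqrt n$ coordinates, that is, $y_i=x_i+1$ for all but $p\sqrt n$ indices $i$. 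In partition language this says $Y_{a+1}\supseteq X_a$ up to a small defect. So the generalized Kneser graph I want has partitions as vertices, and $\bx\sim\by$ whenever $|X_a\setminus Y_{a+1}|$ summed over $a$ is at most $p\sqrt n$. The map $\bx\mapsto\bx$ is then a graph homomorphism into $\Cay(\ZZ_p^n,S)$. Note that the shift $\bx\mapsto\bx+\bm 1$ generates a free $\ZZ_p$-action on these partitions by cyclically relabelling classes, and this is the symmetry the topology will use.

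For the lower bound I would follow the Dol'nikov/Kříž route for $\ZZ_p$-Kneser hypergraphs, adapted to a graph in which adjacency means ``almost a cyclic shift''. Suppose there is a proper coloring with $m<\sqrt n/p^3$ colors. I would place $n$ points on the moment curve in $\RR^d$ and consider the $\ZZ_p$-space $(\ZZ_p)^{*n}$, or a configuration space of $p$-tuples of disjoint measures/arcs on the curve. Each point of this space gives a partition of $[n]$ into $p$ labelled parts: first into intervals, then rounded to a genuine vertex. The coloring then defines, for each color $c$ and each group element $g$, a continuous test function recording how ``colorable'' part $g$ is by color $c$. Any $\ZZ_p$-equivariant map $(\ZZ_p)^{*N}\to W_p^{\oplus m}\setminus\{0\}$ is ruled out when $N$ exceeds $(p-1)m$ by Dold's theorem, and this holds for every prime $p$, not just $p=2$. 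This produces a partition in which, for each color, the cyclically shifted configurations look alike. Following the chain of shifts $\bx,\bx+\bm 1$-ish neighbours, some color class must then contain two vertices differing by almost $\bm 1$, which is a monochromatic edge.

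The main obstacle is the rounding step. The topological argument naturally produces consecutive-interval partitions whose class sizes are fixed only up to errors. Converting the topological coincidence into two vertices with Hamming defect at most $p\sqrt n$ requires losing at most $O(p)$ coordinates per color in the discretisation, and summing over $m$ colors gives a defect of order $p^2 m\cdot p$. That accounting is exactly where the constraint $m<\sqrt n/p^3$ should come from, since with $m$ in that range the total defect stays below $p\sqrt n$. I would first try an interval/necklace-splitting formulation: the $\ZZ_p$ necklace theorem of Alon, for $p$ thieves and $m$ colors, uses $(p-1)m$ cuts. Each cut perturbs $O(1)$ coordinates, which fits this budget.

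For the independence statement I would take $I=\{\bx:\#\{i:x_i=0\}\ge n/p+c\sqrt n\}$ for a suitable constant $c=c(p)$, in the spirit of Kříž and Ruzsa. If $\bx,\by\in I$ with $\by-\bx\in S$, then $y_i=x_i+1$ for all but $p\sqrt n$ coordinates. This forces $\#\{y_i=1\}$ to be large, and iterating the counting over residues gives a contradiction with the concentration of the class counts. By the central limit theorem the set $I$ has measure $\Omega_p(1)$, which gives $\alpha=\Omega_p(p^n)$.
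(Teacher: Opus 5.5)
Your proposal has genuine gaps in both parts. The independent set is wrong for $p\ge3$, and the chromatic argument leaves its decisive step undefined.

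\textbf{The independent set.} Your set is not independent for $p\ge 3$. Take any $\bx$ with $\#\{i:x_i=0\}=\#\{i:x_i=p-1\}=\lceil n/p+c\sqrt n\rceil$, which is possible for large $n$ since $p\ge3$. The zeros of $\bx+\bm1$ are exactly the positions where $x_i=p-1$, so both $\bx$ and $\bx+\bm1$ lie in $I$. Yet $(\bx+\bm1)-\bx=\bm1\in S$, so they are adjacent.

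No ``iteration over residues'' can repair this. An edge $\by\approx\bx+\bm1$ inside $I$ only tells you that $\by$ has many $1$'s and $\bx$ has many $(p-1)$'s, which is perfectly consistent. A single residue count works only for $p=2$, the K\v{r}\'i\v{z}--Ruzsa case.

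You must constrain two statistics at once. For example, take $\#\{x_i=0\}\ge n/p$ together with $\#\{x_i=1\}\le n/p-(p+1)\sqrt n$: an edge in either orientation would give one endpoint at least $n/p-p\sqrt n$ ones. Positive density then requires a two-dimensional CLT. The paper does exactly this with the pair $(f(\bx),f(-\bx))$ and the 2D Berry--Esseen bound.

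\textbf{The chromatic bound.} You never define the test map, and the description you give does not fit your setup. ``How colorable part $g$ is by color $c$'' is the Alon--Frankl--Lov\'asz test for Kneser hypergraphs, where a vertex is a single $k$-set sitting inside one part. Your vertices are whole labelled partitions, so no such per-part quantity exists. Alon's necklace theorem also cannot be used as a black box: the coloring induces no measures on the coordinates to be divided fairly. Your defect count $p^2m\cdot p$ is not derived from anything.

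What is missing is a color-indexed open cover whose full-orbit conclusion is literally an edge. The paper builds it as follows.
\begin{itemize}
\item Each $\bx\in\SSS^{2r-1}$ induces disjoint index sets $I^{(i)}_{\bx}$, consisting of the points of a generic orbit-closed configuration lying in the open sectors $V^{(i)}_{\bx}$. Each has size at least $k$, and $I^{(i)}_{\zeta\bx}=I^{(i+1)}_{\bx}$.
\item $U_j$ is the set of $\bx$ for which some color-$j$ vertex $(A_1,\dots,A_{p-1})$ satisfies $A_i\subseteq I^{(i)}_{\bx}$ for every $i$.
\item Corollary~\ref{coro:Zp-LS} places a full orbit in one $U_j$. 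The witnesses at $\bx$ and $\zeta\bx$ satisfy the prefix--suffix condition, so they are adjacent in $\mathrm{KN}(n,k,p-1)$.
\item Via $\mathcal A\mapsto\sum_i i\bm1_{A_i}$ with $k=(n-\sqrt n)/p$, they are also adjacent in $\Cay(\ZZ_p^n,S)$.
\end{itemize}
Using $k$-sets with slack instead of full partitions is what makes ``fits inside'' an open condition, and it removes your rounding problem.

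Your full-partition idea can be completed in the same spirit. On the necklace space $(\ZZ_p)^{*(N+1)}$, let $U_c$ be the set of configurations for which some color-$c$ vector agrees with the induced interval labelling on more than $n-D$ beads. This set is open and the family covers once $D>N$. An orbit in $U_c$ yields two color-$c$ vectors whose difference agrees with $\bm1$ off fewer than $2D$ coordinates. Taking $N=(p-1)(m-1)$ and $D=N+1$, this would even give a bound of order $\sqrt n$. But this definition and its verification are precisely what your proposal leaves out.
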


The first conclusion resolves Griesmer's question, while the second retains the positive-density independent-set feature needed in recurrence applications. The proof introduces a generalized Kneser graph adapted to $\ZZ_p$, embeds it into the Cayley graph, and derives its chromatic lower bound from a $\ZZ_p$-equivariant Borsuk--Ulam type obstruction.

For $p=2$, the corresponding phenomenon underlies the constructions of K\v{r}\'i\v{z} and Ruzsa. In an upcoming work~\cite{separatingnote}, we combine \cref{thm:cayley} with additional dynamical machinery to separate~\ref{it:topological_recurrent} from~\ref{it:density_recurrent} in every countably infinite abelian group.\footnote{Since that argument is not part of the present paper, we use the recurrence discussion here only to explain the origin and significance of the finite-dimensional Cayley-graph theorem.}

\begin{theorem}[\cite{separatingnote}]\label{cor:separation}
Let $\Gamma$ be a countably infinite abelian group. Then there exists a subset of $\Gamma$ that is topologically recurrent but not measurably recurrent.
\end{theorem}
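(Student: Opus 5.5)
The plan is to transfer the finite Cayley graphs of \cref{thm:cayley} into $\Gamma$ through homomorphisms, and to arrange the pieces so that a single positive-density set witnesses non-recurrence. We first reduce to two cases using the structure of countable abelian groups. Either $\Gamma$ has a subgroup or quotient isomorphic to $\ZZ$ (or to an infinite direct sum of finite cyclic groups), or $\Gamma$ is torsion. In the torsion case, $\Gamma$ contains either infinitely many independent elements of some prime order $p$, or elements of unboundedly large prime-power order.

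We then use two transfer facts.
\begin{enumerate}
\item If $\varphi:\Gamma\to G$ is a surjective homomorphism onto a finite group and $0\notin S_0$, then $\Cay(\Gamma,\varphi^{-1}(S_0))$ is a blow-up of $\Cay(G,S_0)$ with independent fibres. Hence its chromatic number equals $\chi(\Cay(G,S_0))$, and $\varphi^{-1}(I)$ is independent of Banach density $|I|/|G|$ whenever $I$ is independent in $\Cay(G,S_0)$.
\item If $H\le\Gamma$ and $S\subseteq H$, then colourings of $\Cay(H,S)$ extend coset by coset, so recurrence properties in $H$ pass to $\Gamma$ with the same chromatic number. For the density side, a set $A\subseteq H$ of positive density can be thickened to $A+T$ for a transversal $T$ of $H$ in $\Gamma$; this set has positive density and the same difference set inside $H$.
\end{enumerate}
Using these facts, we realise copies of $\ZZ_p^{n}$ for infinitely many $n$ (or finite cyclic quotients playing the same role, in the $\ZZ$ case via $\ZZ\to\ZZ_{m}$) inside $\Gamma$, by subgroups or by quotients.

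The candidate set is $S=\bigcup_j S_j$, where $S_j$ is the pullback of the Hamming ball $\{\bx:\mathrm d(\bx,\bm 1)\le p\sqrt{n_j}\}$ on the $j$-th block and $n_j\to\infty$. Topological recurrence is then immediate from \cref{thm:cayley}, because $\chi(\Cay(\Gamma,S))\ge\chi(\Cay(\ZZ_p^{n_j},S_j))\ge\sqrt{n_j}/p^3\to\infty$.

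The hard part, and the main obstacle I expect, is measurable non-recurrence. The obvious intersection $A=\bigcap_j\varphi_j^{-1}(I_j)$ of the independent sets supplied by \cref{thm:cayley} has density $\prod_j \alpha_j\to0$, since each $\alpha_j$ is only $\Omega_p(1)$ and cannot be pushed near $1$. To avoid this, I would use one global statistic rather than independent constraints. Choose the blocks nested, $H_1\supseteq H_2\supseteq\cdots$, with the $j$-th Hamming structure living on coordinates in a single infinite product $\ZZ_p^{\infty}$ (or an inverse limit of quotients). Define $A$ by a single weighted threshold of the form
\[
A=\Bigl\{x:\ \sum_j w_j\,\bigl(\#\{\text{zero coordinates of }x\text{ in block }j\}-n_j/p\bigr)/\sqrt{n_j}\ \ge\ 0\Bigr\},
\]
with rapidly decaying weights $w_j$. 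By a central-limit or anticoncentration estimate, this set has density bounded below uniformly.

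The remaining step is to show that $(A-A)\cap S_j=\varnothing$ for every $j$. For $s\in S_j$, the difference $x-y=s$ forces $x$ and $y$ to disagree on almost all coordinates of block $j$. This shifts the $j$-th normalised count by a definite amount in a direction incompatible with both $x,y\in A$, provided the $w_j$ decay fast enough that later blocks cannot compensate; this is the mechanism behind the $\alpha$-bound in \cref{thm:cayley}. Making this compensation argument rigorous for odd $p$, where disagreement does not determine the value of the coordinate, is where I expect the real work to lie. If it fails, the fallback is the dynamical formulation. By Furstenberg correspondence it suffices to find a measure-preserving $\Gamma$-system and a set $B$ with $\mu(B)>0$ and $\mu(B\cap T^{s}B)=0$ for $s\in S$. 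One would then construct $B$ in a skew or odometer-type system built from the $\ZZ_p^{n_j}$, rather than in a plain product.
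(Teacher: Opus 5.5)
\textbf{Gap: measurable non-recurrence is not established.} The paper does not prove this theorem. It defers it to \cite{separatingnote}, where \cref{thm:cayley} is combined with further dynamical machinery. Your chromatic half is fine: the two transfer facts are correct, and \cref{thm:cayley} gives $\chi(\Cay(\Gamma,S))=\infty$. The measurable half, however, is the whole difficulty, and the weighted-threshold set you propose fails.

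Take the most favourable realisation: $S_j$ lies inside the block subgroup $\ZZ_p^{B_j}$, with the blocks disjoint. For $s\in S_j$, the points $x$ and $x-s$ agree off block $j$, so only the $j$-th summand of your statistic changes. That summand always lies in $[-w_j\sqrt{n_j}/p,\ w_j(1-1/p)\sqrt{n_j}]$. Hence any $x$ whose other summands total at least $w_j\sqrt{n_j}/p$ lies in $A$ together with all of $x-S_j$. Convergence of the statistic at $x=0$ forces $w_j\sqrt{n_j}\to0$, so $x=0$ qualifies for all large $j$. Then $-s\in A$, and so $s=0-(-s)\in A-A$ for every $s\in S_j$. (For $p=2$ the $j$-th summand merely changes sign up to $O(w_j)$, which is compatible with both points lying in $A$.)

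A threshold with no per-block margin therefore cannot separate. Putting a margin on each block, as in the independent set of \cref{thm:cayley}, brings back the product $\prod_j\alpha_j\to0$ that you wanted to avoid.

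What is actually required is a uniform $\delta>0$ such that, for every $J$, some set of upper Banach density at least $\delta$ avoids $S_1\cup\dots\cup S_J$. A weak$^*$-compactness argument on $\{0,1\}^\Gamma$ via the Furstenberg correspondence then produces one set avoiding all of $S$, and conversely. With disjoint blocks, this amounts to a $J$-uniform lower bound on the independence ratio of the Cartesian product $\Cay(\ZZ_p^{n_1},S_1)\,\square\cdots\square\,\Cay(\ZZ_p^{n_J},S_J)$. Nothing in \cref{thm:cayley} controls this, and it is not automatic: the Petersen graph has independence ratio $2/5$, but its Cartesian product with $K_2$ has ratio $7/20$. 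Your fallback (``a skew or odometer-type system'') names no construction, so non-recurrence remains unproved. This is exactly where the missing machinery is needed.

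\textbf{Secondary gap: the reduction over $\Gamma$ misses a case.} Your transfer facts realise the Hamming structure only through subgroups or quotients isomorphic to $\ZZ_p^n$, or to cyclic $\ZZ_m$ with $m$ squarefree via the CRT discretisation of \cref{lem:small-generate-large-cayley}. That covers non-torsion groups (via $\ZZ$), groups of infinite $p$-rank, and torsion groups involving infinitely many primes. The remaining case reduces to the quasicyclic group $\ZZ(p^\infty)$. That group is torsion, its finite subgroups are cyclic of order $p^j$, and it has no nontrivial finite quotients, so neither transfer fact applies. There you would need an approximate, Bohr-type embedding of $\Cay(\ZZ_q^n,S)$ through characters into $\TT^n$, together with a measurable-side construction compatible with it.
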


From this perspective, the contrapositive of $(a)\Rightarrow(b)$ in~\cref{thm:main0} gives a finite-group analogue of the same phenomenon. Even under the additional constraint of avoiding solutions to a fixed linear equation $\mathcal L$ (for example $x+ry=z$ with $r\neq0$), there exist dense sets that are topologically recurrent but not measurably recurrent. \cref{thm:cayley} supplies the obstruction needed for this nonvanishing direction of \cref{thm:main0}.

Conversely, the implication $(b)\Rightarrow(a)$ in~\cref{thm:main0} gives a stronger structural conclusion: 
dense $\cL$-solution-free sets are not Bohr recurrent: the proof produces a bounded-rank Bohr neighborhood
$B$ for which $|A\cap B|=O_{\cL}(1)$; discretizing the defining characters of $B$ then yields the bounded coloring. Thus, $(b)\Rightarrow(a)$ can be viewed as a finite-group analogue of the classical implication that topological recurrence~\ref{it:topological_recurrent} implies Bohr recurrence~\ref{it:bohr_recurrent}. 

\subsection{Related results and our approach}

\paragraph{Position among classical criteria.} For a single homogeneous equation, Rado's theorem \cite{rado1933studien} says that partition regularity is equivalent to the existence of a nonempty zero-sum subcollection of coefficients; injective partition regularity is equivalent to partition regularity \cite{graham1990ramsey}. Roth's density criterion is the stronger requirement that all coefficients sum to zero. Theorem~\ref{thm:main0} identifies the exact intermediate requirement: a zero-sum subcollection must exist, and it must contain at least three coefficients. Thus the chromatic threshold is not merely an interpolation by analogy; it selects a new algebraic class strictly between density regularity and partition regularity.

This work also belongs to a broader program of importing graph-theoretic extremal notions into arithmetic, including analogues of Sidorenko's property and commonness; see \cite{dongli2024uncommon,foxphamzhao2019common,kml2023sidorenko,kml2024uncommon,saadwolf2017ramsey}.

\paragraph{Our approach.}
The two directions of \cref{thm:main0} require different structural mechanisms. We describe them in the order in which they appear in the paper.

\noindent\textit{The Fourier--Bohr direction: $(b)\Rightarrow(a)$.} Assume that $A\subseteq \FF_p$ is $\cL$-solution-free and has positive density. Condition $(b)$ provides a translation-invariant sub-equation in at least three variables. By a supersaturation theorem, a dense set $A$ contains many solutions to this sub-equation. This leads to a structural restriction: $A$ cannot place many points inside the Bohr set $B$ associated with its large spectrum. Intuitively, points in this Bohr set behave like approximate periods for the structured part of $A$; if $A$ clustered there, then the shift-invariance forced by the zero-sum sub-equation would produce too many completions to a forbidden solution of $\cL$. Once we know that $A\cap B$ is small, a bounded coloring follows: discretizing the phases of the characters in the large spectrum partitions $\FF_p$ into a bounded number of cells in which all differences lie in $B$, so each cell induces a sparse subgraph of $\Cay(\FF_p,A)$ and hence has bounded chromatic number.  The point where the assumption that the zero-sum subcollection has at least three coefficients becomes essential
is in the Fourier control of the contribution from frequencies outside the large spectrum: having at least three terms allows us to dominate higher moments by the $L^2$-norm via Parseval identity, which is precisely what fails when the only zero-sum subcollection has size two.

\noindent\textit{The construction direction: the contrapositive of $(a)\Rightarrow(b)$.}
If there is no zero-sum subcollection at all, the Ramsey--Tur\'an result already yields the counterexamples needed to show $\delta_\chi(\cL)>0$. The genuinely new case is when zero-sum pairs exist but no zero-sum subcollection has size at least three. After relabeling, one may assume $c_1+c_2=0$. We then construct positive-density $\cL$-solution-free generators whose chromatic numbers are arbitrarily large. The argument has two stages.

\noindent\textit{Stage 1: a topological obstruction inside a Cayley graph (Section~\ref{sec:gen-kneser}).}
To build intuition and highlight the main difficulties, we begin with a guiding example: the Schur equation $x-y+z=0$. In this case there is a classical high-chromatic obstruction inside a Cayley graph on $\ZZ_2^n$, namely the Kneser graph $\mathrm{KN}(n,k)$, whose vertices are the $k$-subsets of $[n]$ with edges between disjoint pairs. Lov\'asz~\cite{lovasz1978kneser} famously proved that $\chi(\mathrm{KN}(n,k)) = n-2k+2$. The graph embeds naturally into $\ZZ_2^n$ by encoding a $k$-set $A\subseteq[n]$ as its indicator vector $\bm{1}_A$, so disjointness corresponds to a difference vector concentrated near the all-ones vector. Moreover, a small Hamming neighborhood of $\bm 1$ in $\ZZ_2^n$ is itself $\{x-y+z=0\}$-solution-free, so the same generator both supports the embedding (hence large chromatic number) and satisfies the required solution-freeness. 

The binary construction, however, is not robust enough for the general
equations arising here. For example, the equation
$x_1-x_2+x_3+x_4=0,$
which has a canceling pair but no zero-sum subcollection of size at least
three, reduces over $\ZZ_2$ to
$x_1+x_2+x_3+x_4=0$. A Hamming ball of radius two around $\bm1$ already
contains four distinct solutions: take
$\bm1$, $\bm1+\bm e_1$, $\bm1+\bm e_2$, and
$\bm1+\bm e_1+\bm e_2$.
Thus the same Hamming neighborhood cannot simultaneously support the desired high-chromatic obstruction and remain $\cL$-solution-free. More fundamentally, the norm-separation argument in Stage~2 requires an auxiliary prime larger than the coefficient scale, forcing us to work over $\ZZ_p^n$ for arbitrary primes $p$. In odd characteristic, however, the classical indicator-vector embedding of the Kneser graph no longer interacts correctly with differences near $\bm1$. The main task of Stage~1 is therefore to construct a genuinely $p$-ary Kneser-type graph, together with a compatible embedding into a Hamming-ball Cayley graph on $\ZZ_p^n$, that can subsequently be combined with the arithmetic construction of $\cL$-solution-free generators.

To this end, we introduce $\mathrm{KN}(n,k,p-1)$, whose vertices are ordered $(p-1)$-tuples of disjoint $k$-sets and whose prefix--suffix adjacency is designed so that
\[
(A_1,\dots,A_{p-1})
\longmapsto
\bm1_{A_1}+2\bm1_{A_2}+\cdots+(p-1)\bm1_{A_{p-1}}
\]
embeds the graph into a Hamming-ball Cayley graph on $\ZZ_p^n$.

The chromatic lower bound uses a free $\ZZ_p$-action on an odd-dimensional sphere. A generic family of labeled $\ZZ_p$-orbits yields $p$ disjoint cyclic sectors, each containing at least $k$ labels. Given a proper coloring, for each color $j$ we let $U_j$ consist of the sphere points whose sectors realize a vertex of color $j$. The sets $U_j$ form an open cover. Dold's equivariant obstruction forces a complete $\ZZ_p$-orbit into one $U_j$; witnesses at two consecutive orbit points satisfy the prefix--suffix adjacency condition and therefore form a monochromatic edge. This proves the generalized Kneser bound and hence the chromatic estimate in \cref{thm:cayley}. The linear-size independent set in that theorem is obtained separately by a two-dimensional Berry--Esseen estimate.

\noindent\textit{Stage 2: adding arithmetic separation and density (Section~\ref{sec:a-implies-b}).}
The Hamming-ball generator alone need not be $\cL$-solution-free. We first work in a product group
$\ZZ_m\cong\prod_{i=1}^n\ZZ_{p_i}$ and define an asymmetric coordinate norm centered near $p_i/q$. Its near-maximal level set $E_0$ contains a discretized copy of the generator from Stage~1, so its Cayley graph has large chromatic number; at the same time, a norm-separation inequality prevents the linear form $\sum_i c_i x_i$ from vanishing on $E_0^k$.

The set $E_0$ is not dense enough. We therefore construct a positive-density extension set $F_0$ whose two relevant norms are simultaneously small. A two-dimensional Berry--Esseen theorem gives $|F_0|=\Theta(m)$, while the norm gap ensures that the $F_0$-contribution cannot cancel the $E_0$-contribution. Finally, we lift both sets to a sufficiently large prime cyclic group. The absence of zero-sum coefficient subcollections of size at least three reduces every possible mixed solution to the canceling pair $c_1+c_2=0$, which is excluded by the extension property.

\noindent\textbf{Organization.}
Section~\ref{sec:preliminaries} collects the analytic, probabilistic, and topological tools. Section~\ref{sec:b-implies-a} proves the Fourier--Bohr direction. Section~\ref{sec:gen-kneser} develops the generalized Kneser graph and proves \cref{thm:cayley}. Section~\ref{sec:a-implies-b} constructs the dense high-chromatic solution-free sets, and Section~\ref{sec:conclude} records further questions and related thresholds.

\section{Preliminaries}\label{sec:preliminaries}

We first collect notations that are used in this paper. 

For an integer $m$, denote $[m]$ to be the set $\{1,\dots,m\}$, and $\ZZ_m$ to be the abelian group $\ZZ/m\ZZ$. When there is no ambiguity, sometimes we identify $\{0,1,\dots,m-1\}$ with $\ZZ_m$ and write the indices additively $\bmod m$. For $a\in\ZZ_m$, we denote by $\overline{a}$ its representative in $\{0,1,\dots,m-1\}$.  For the sake of presentation, we will omit floors and ceilings whenever they are not important.

Throughout the paper, $p,q$ denote prime numbers, and $\FF_p$ denotes the finite field of order $p$ (it may be identified with the cyclic group $\ZZ/p\ZZ$ under addition when convenient). We reserve $\ZZ_p$ for the cyclic group of order $p$ when discussing group actions/topology. In vector spaces, boldface symbols denote vectors; in particular, for a set $A$, $\bm{1}_A$ denotes its characteristic vector that takes value $1$ on $A$ and $0$ elsewhere. For two sets $A,B$ in a field, denote $A+B\coloneqq\{a+b:a\in A,b\in B\}$. For a set $A$ and a scalar $k\in\ZZ$, denote $kA\coloneqq\{ka:a\in A\}$.

Expectation and probability are denoted by $\EE$ and $\PP$, respectively. For random variables $\xi_1,\xi_2$, we write
$\mathrm{Cov}(\xi_1,\xi_2)\coloneqq\EE[(\xi_1-\EE \xi_1)(\xi_2-\EE \xi_2)]$ and
$\mathrm{Var}(\xi_1)\coloneqq\mathrm{Cov}(\xi_1,\xi_1)$.
For a bivariate random variable $X=(\xi_1,\xi_2)\in\RR^2$, let the covariance matrix be
\[
\mathrm{Cov}(X)\coloneqq
\begin{bmatrix}
    \mathrm{Var}(\xi_1) & \mathrm{Cov}(\xi_1,\xi_2)\\
    \mathrm{Cov}(\xi_2,\xi_1) & \mathrm{Var}(\xi_2)
\end{bmatrix}.
\]
For a symmetric matrix $M$, $\lambda_{\min}(M)$ and $\lambda_{\max}(M)$ denote its
smallest and largest eigenvalues, respectively.

We write $N(\mu,\Sigma)$ for the Gaussian distribution with mean $\mu$ and covariance $\Sigma$.
The standard normal distribution function and density function are denoted respectively by
\[
\Phi(t)\coloneqq\frac{1}{\sqrt{2\pi}}\int_{-\infty}^t e^{-x^2/2}\,dx,
\qquad
\varphi(t)\coloneqq\frac{1}{\sqrt{2\pi}}e^{-t^2/2}.
\]

For $d\ge 0$, $\SSS^d$ denotes the unit sphere in $\RR^{d+1}$.
For a nonempty set $A\subseteq \SSS^d$, $\dist(x,A)$ denotes the geodesic
distance from $x$ to $A$.

For prime $p$ and a topological space $X$, a continuous map $g\colon X\to X$ is called an \emph{$\ZZ_p$-action} if $g^p=\mathrm{id}$. It is called \emph{free} if there is no $x$ with $gx=x$. Given $\ZZ_p$-actions $g_X$ on $X$ and $g_Y$ on $Y$, a map $f\colon X\to Y$ is called \emph{$\ZZ_p$-equivariant} (or simply equivariant if $\ZZ_p$ is fixed) if $f(g_Xx)=g_Yf(x)$ for all $x\in X$.

\subsection{Topology}

In this subsection, we collect several basic topological notions that will be used later in the paper. Our purpose is not to provide a comprehensive introduction to topology, but rather
to present a small collection of basic topological facts that will be used in our arguments.

Fix an odd prime $p$, and write $\zeta \coloneqq e^{2\pi i/p}$ for a primitive $p$-th root
of unity.
We identify the real vector space $\mathbb R^{2m}$ with $\mathbb C^m$ in the
standard way, by grouping the $2m$ real coordinates into $m$ complex pairs.

Under this identification, the unit sphere
\[
\SSS^{2m-1} = \{ \bx \in \mathbb C^m : \|\bx\| = 1 \}
\]
is preserved by complex scalar multiplication of modulus one. Here we write $\norm{\cdot}$ for the Euclidean norm.
In particular, the cyclic group $\mathbb Z_p$ acts freely and continuously on $\SSS^{2m-1}$ by rotating every complex coordinate by angle $2\pi/p$:
\[
g_X \bx \coloneqq \zeta \bx,
\]
where $g$ denotes a fixed generator of $\mathbb Z_p$.

The following result is the main topological theorem that we will use in this paper. It is a corollary of Dold's Theorem \cite{1983Simple}, whose idea has been used for Kneser-type results in \cite{3e87df0e-4a01-3fa8-9594-dbda78a6d23c,ziegler2002generalized}. For a comprehensive treatment on this topic, see \cite{matouvsek2003using}.

\begin{theorem}\label{thm:Zp-LS}
Let $U_1,\dots,U_{t+1}$ be closed sets that cover $\SSS^{2m-1}$. If $t(p-1) < 2m$, then at least one $U_i$ contains a full $\mathbb Z_p$--orbit $\{\bx,\zeta \bx,\dots,\zeta^{p-1}\bx\}$.
\end{theorem}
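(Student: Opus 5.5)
We derive \cref{thm:Zp-LS} from Dold's theorem: if $X$ is an $n$-connected free $\ZZ_p$-space and $Y$ is a free $\ZZ_p$-space of dimension at most $n$ (e.g.\ a finite free $\ZZ_p$-simplicial complex), then there is no $\ZZ_p$-equivariant map $X\to Y$. We apply it with $X=\SSS^{2m-1}$, which is $(2m-2)$-connected and carries the free action $\bx\mapsto\zeta\bx$. Arguing by contradiction, we assume that no $U_i$ contains a full orbit, and we build an equivariant map from $\SSS^{2m-1}$ into a free $\ZZ_p$-space of dimension at most $t(p-1)-1\le 2m-2$.

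\emph{Construction of the map.} For $\bx\in\SSS^{2m-1}$ and $j\in\ZZ_p$, set $f_i^{(j)}(\bx)\coloneqq\dist(\zeta^{j}\bx,U_i)$. These functions are continuous, and $f_i^{(j)}(\bx)=0$ exactly when $\zeta^j\bx\in U_i$, since $U_i$ is closed. Let $F_i(\bx)=(f_i^{(0)}(\bx),\dots,f_i^{(p-1)}(\bx))\in\RR^p$. Multiplication by $\zeta$ cyclically shifts the coordinates of $F_i$. Because the $U_i$ are closed and contain no full orbit, some coordinate of each $F_i(\bx)$ is strictly positive. Because the $U_i$ cover the sphere, for each $j$ some $f_i^{(j)}(\bx)$ vanishes. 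Consider the $p$-vector $G(\bx)=\bigl(\min_i f_i^{(j)}(\bx)\bigr)_j$, which is identically zero; the useful information is instead carried by the collection of all the $F_i$. Project each $F_i$ onto the hyperplane $W=\{\by\in\RR^p:\sum_j y_j=0\}$, which is a $(p-1)$-dimensional real representation with no fixed vectors. We want the combined map $\Phi(\bx)=(\pi_W F_1(\bx),\dots,\pi_W F_{t+1}(\bx))\in W^{t+1}$ to avoid $0$.

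\emph{The main obstacle.} As stated, the map $\Phi$ can vanish, because a coordinate vector $F_i(\bx)$ can be constant and nonzero. The plan is to replace $F_i$ by the normalized vector $\tilde F_i(\bx)=F_i(\bx)/\max_j f_i^{(j)}(\bx)$ and to use $\tilde F_i$ only through its zero set. More robustly, we use the standard configuration-space reduction. Set $w_i(\bx)\coloneqq\min_j f_i^{(j)}(\bx)$, which is an invariant function. By the covering property, $\min_i w_i$ is zero everywhere. Then form the $W$-valued map $\Psi_i(\bx)=\pi_W\bigl(F_i(\bx)\bigr)$, modified by the invariant factor $\max\{0,\varepsilon-w_i(\bx)\}$. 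For an index $i$ with $w_i(\bx)=0$, the vector $F_i(\bx)$ has a zero coordinate and a positive coordinate, so it is non-constant and $\pi_W F_i(\bx)\neq0$. Hence $\Psi=(\Psi_1,\dots,\Psi_{t+1})$ has no zeros and maps equivariantly into $W^{t+1}\setminus\{0\}\simeq_{\ZZ_p}\SSS^{(t+1)(p-1)-1}$. This only gives a target of dimension $(t+1)(p-1)-1$, which is off by $p-1$ from what we need. To save one block, use the fact that at every point some $U_i$ contains a designated point of the orbit. Replace the $t+1$ blocks by the $t$ differences $\Psi_i-\Psi_{t+1}$, or, as in Ziegler's and Kriz's proofs, map into the join $E_t\ZZ_p=(\ZZ_p)^{*t}$ of $t$ copies of $\ZZ_p$. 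That join is a free $(t-1)$-dimensional space; the $(p-1)$-scaled version arises from the representation sphere $S(W^{t})$, which has dimension $t(p-1)-1$. The step of cutting one $W$-block is where I expect the real difficulty. I would first try a partition-of-unity argument: choose weights $\lambda_i(\bx)$ that are supported where $w_i<\varepsilon$, and map $\bx\mapsto\sum_i\lambda_i(\bx)\,\pi_W\tilde F_i(\bx)\otimes e_i$ into $W\otimes\RR^{t+1}$ restricted to the affine hyperplane $\sum_i\lambda_i=1$. This restriction kills one dimension of $\RR^{t+1}$ and leaves a space that is equivariantly homotopy equivalent to $S(W\otimes\RR^t)$.

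\emph{Conclusion.} With an equivariant map $\SSS^{2m-1}\to S(W^t)$ in hand, the target is a free $\ZZ_p$-sphere of dimension $t(p-1)-1<2m-1$, so $t(p-1)-1\le 2m-2$. Dold's theorem then forbids such a map, which is the desired contradiction. Hence some $U_i$ contains a full orbit $\{\bx,\zeta\bx,\dots,\zeta^{p-1}\bx\}$.
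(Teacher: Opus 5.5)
Your framework (the distance vectors $F_i$, projection onto the sum-zero representation $W$, and Dold's theorem) matches the paper's. However, there is a genuine gap at exactly the step you flag. You never construct a nowhere-vanishing equivariant map into $W^t$ (rather than $W^{t+1}$), yet your conclusion uses one, and none of the devices you suggest supplies it.

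\begin{itemize}
\item For the differences $\Psi_i-\Psi_{t+1}$, you give no reason why they cannot all vanish at once. That would only require the $t+1$ blocks to coincide at some point, and nothing in your argument excludes this.
\item The partition-of-unity map $\bx\mapsto\sum_i\lambda_i(\bx)\,\pi_W\tilde F_i(\bx)\otimes e_i$ lands in the join of $t+1$ copies of $S(W)$. That space is equivariantly equivalent to $S(W^{t+1})$, of dimension $(t+1)(p-1)-1$. The constraint $\sum_i\lambda_i=1$ restricts the coefficients, not the target, so it removes no $W$-block.
\item Your diagnosis of the obstacle is also off. Under the contradiction hypothesis, the full map $(\pi_W F_1,\dots,\pi_W F_{t+1})$ cannot vanish. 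In fact it stays nonvanishing after deleting any one block, and that is the whole point.
\end{itemize}

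The missing idea is simply to leave $U_{t+1}$ out of the map, as the paper does. Its blocks $\Phi_j(\bx)_a=\dist(\zeta^a\bx,U_j)-\dist(\zeta^{a+1}\bx,U_j)$ are an equivariant linear reparametrization of your $\pi_W F_j$.

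Put $f=(\pi_W F_1,\dots,\pi_W F_t)\colon\SSS^{2m-1}\to W^t$. Suppose $f(\bx)=0$. Then $F_i(\bx)$ is constant for each $i\le t$. Since $U_i$ is closed and contains no full orbit, that constant is positive, so no point of the orbit of $\bx$ lies in $U_1\cup\dots\cup U_t$. By your own observation that each $\zeta^j\bx$ lies in some $U_i$, the whole orbit lies in $U_{t+1}$, a contradiction.

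So $f$ has no zeros, and radial projection yields an equivariant map $\SSS^{2m-1}\to S(W^t)\cong\SSS^{t(p-1)-1}$. The action on the target is free, because $W$ has no nonzero fixed vector and every nonidentity element of $\ZZ_p$ generates the group. Since $t(p-1)-1\le 2m-2$, Dold's theorem gives the contradiction. No $\varepsilon$-cutoffs, joins, or partitions of unity are needed.
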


\begin{proof}

Let
\[
H \coloneqq \left\{(y_0,\dots,y_{p-1})\in\R^p : \sum_{a=0}^{p-1} y_a = 0\right\}
\]
be a subspace of $\R^p$, then $\dim H = p-1$.
Let $\mathbb Z_p$ act on $\mathbb R^p$ by cyclic permutation of coordinates:
\[
g_Y(y_0,y_1,\dots,y_{p-1}) = (y_1,y_2,\dots,y_{p-1},y_0).
\]
Obviously this action preserves $H$. Moreover, it has no nonzero fixed vector in $H$:
indeed, $g_Y\cdot \by=\by$ forces all coordinates equal, hence $\sum y_a=0$ implies $\by=0$.

For $t \ge 1$, define
\[
V \coloneqq H^t \cong \mathbb R^{t(p-1)}, \qquad
Y \coloneqq \SSS^{t(p-1)-1}\subseteq V.
\]
We equip $V$ with the \emph{diagonal} $\mathbb Z_p$--action. For
$\bv = (\bv_1,\dots,\bv_t) \in H^t$, define
\[
g_Y \bv \coloneqq (g_Y \bv_1,\dots,g_Y \bv_t).
\]

This action is free on $Y$.
Indeed, suppose that $g_Y \bv = \bv$ for some
$\bv \in Y$.
Then, $g_Y \bv_i = \bv_i$ for each $i$, which forces
$\bv_i = 0$ for all $i$, a contradiction.

For each $j=1,\dots,t$ define the map $\Phi_j:\SSS^{2m-1}\to \mathbb R^p$ by
\[
\Phi_j(\bx)_a \coloneqq \dist(\zeta^a \bx, U_j) - \dist(\zeta^{a+1}\bx, U_j),
\qquad a\in\{0,1,\dots,p-1\},
\]
where indices are taken mod $p$.
Continuity of $\Phi_j$ follows from continuity of $\dist(\cdot,U_j)$.

Summing over $a$ gives a telescoping sum:
\[
\sum_{a=0}^{p-1} \Phi_j(\bx)_a
=
\sum_{a=0}^{p-1}\bigl(\dist(\zeta^a \bx, U_j) - \dist(\zeta^{a+1}\bx, U_j)\bigr)=0,
\]
so $\Phi_j(\bx)\in H$ for every $\bx\in \SSS^{2m-1}$ and $j\le t$.

Now define
\[
f:\SSS^{2m-1}\to V=H^t,\qquad f(\bx)\coloneqq(\Phi_1(\bx),\dots,\Phi_t(\bx)).
\]

Then for each $j$ and each $a$,
\[
\Phi_j(\zeta \bx)_a
=
\dist(\zeta^a(\zeta \bx),U_j)-\dist(\zeta^{a+1}(\zeta \bx),U_j)
=
\dist(\zeta^{a+1}\bx,U_j)-\dist(\zeta^{a+2}\bx,U_j)
=\Phi_j(\bx)_{a+1},
\]
which is exactly the cyclic permutation action of $g_Y$ on coordinates. Thus each block $\Phi_j$ is equivariant, hence the map $f$ is also equivariant.

Assume for contradiction that none of $U_1,\dots,U_{t+1}$ contains a full
$\mathbb Z_p$--orbit.
We first show that $f$ has no zeros.
Otherwise, let $\bx\in \SSS^{2m-1}$ satisfy $f(\bx)=0$.
Then for each $j\le t$ and each $a\in\{0,1,\dots,p-1\}$ we have
\[
0=\Phi_j(\bx)_a=\dist(\zeta^a \bx,U_j)-\dist(\zeta^{a+1}\bx,U_j),
\]
so the $p$ numbers
\[
\dist(\bx,U_j),~\dist(\zeta \bx,U_j),~\dots,~\dist(\zeta^{p-1}\bx,U_j)
\]
are all equal.

Since the sets $U_1,\dots,U_{t+1}$ cover the sphere, there exists an index
$i$ with $\bx\in U_i$.
If $i\le t$, then $\dist(\bx,U_i)=0$, and by the above equality of distances we
obtain $\dist(\zeta^a\bx,U_i)=0$ for all $a$.
Because $U_i$ is closed, $\dist(y,U_i)=0$ implies $y\in U_i$, and hence
$\zeta^a\bx\in U_i$ for every $a$.
Thus $U_i$ contains the full orbit of $\bx$, a contradiction.
Therefore $\bx\in U_{t+1}$.

Since $f$ is equivariant and $f(\bx)=0$, we have $f(\zeta^a\bx)=f(g_X^{(a)}\bx)=g_Y^{(a)}f(\bx)=0$ for each $a\in\{0,1,\dots,p-1\}$. Now, repeat the same reasoning for each point $\zeta^a\bx$ in the orbit, we know that $\zeta^a\bx\in U_{t+1}$ for every $a$.
This shows that $U_{t+1}$ contains the full orbit of $\bx$, again a contradiction. Therefore $f$ has no zeros.

To complete the argument, we invoke the following standard consequence of Dold's theorem \cite{1983Simple}.

\begin{theorem}[Dold's Theorem for $\ZZ_p$]\label{thm:dold}
Let $X$ be a $d$-connected space with a $\ZZ_p$-action, and let $Y$ be a space with dimension at most $d$ and a free $\ZZ_p$-action.
Then there is no $\ZZ_p$-equivariant map $X\to Y$.
\end{theorem}

In our application, $X$ will be an odd-dimensional sphere. Recall that $\SSS^{2m-1}$ is $(2m-2)$-connected, so we may take $d=2m-2$.

As $f$ has no zeros, it takes values in $V\setminus\{0\}$. Composing $f$ with the radial retraction yields a $\ZZ_p$--equivariant map
\[
\SSS^{2m-1}\xrightarrow{\,f\,} V\setminus\{0\} \xrightarrow{\,r\,} \SSS^{t(p-1)-1},
\]
where $r$ is the radial projection onto the unit sphere in $V$,
\[
r(\bm{v}) \coloneqq \frac{\bm{v}}{\|\bm{v}\|}\qquad (\bm{v}\in V\setminus\{0\}).
\]
The map $r$ is $\ZZ_p$--equivariant because the $\ZZ_p$--action on $V$ is
orthogonal and hence preserves the Euclidean norm. Moreover, the induced action on $\SSS^{t(p-1)-1}$ is free, and by the inequality
$t(p-1)<2m$ we have
\[
\dim\bigl(\SSS^{t(p-1)-1}\bigr)=t(p-1)-1 \le 2m-2 = d.
\]
Therefore Theorem~\ref{thm:dold} applies and forbids the existence of such an
equivariant map, a contradiction.
\end{proof}

According to the Shrinking Lemma (see, e.g., \cite{MunkresTopology}), any finite open cover $\{U_1, \dots, U_t\}$ of a metric space admits a closed cover $\{C_1, \dots, C_t\}$ with $C_i \subseteq U_i$ for all $i$. The following corollary is immediate.

\begin{corollary}\label{coro:Zp-LS}
Let $U_1,\dots,U_{t+1}$ be open sets that cover $\SSS^{2m-1}$. If $t(p-1) < 2m,$ then one of the sets
$U_1,\dots,U_t,U_{t+1}$ contains a full $\mathbb Z_p$--orbit
$\{\bx,\zeta \bx,\dots,\zeta^{p-1}\bx\}$.
\end{corollary}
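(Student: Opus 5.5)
The plan is to reduce the open-cover version to the closed-cover version, \cref{thm:Zp-LS}, using the Shrinking Lemma quoted just above. The sphere $\SSS^{2m-1}$ is a compact metric space, so the open cover $U_1,\dots,U_{t+1}$ admits a closed cover $C_1,\dots,C_{t+1}$ with $C_i\subseteq U_i$ for every $i$. Concretely, one can take $C_i\coloneqq\{\bx:\dist(\bx,\SSS^{2m-1}\setminus U_i)\ge \rho_i(\bx)\}$ for suitable continuous functions $\rho_i$. Alternatively, simply cite the Shrinking Lemma \cite{MunkresTopology}, which applies to any finite open cover of a normal space, and metric spaces are normal.

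Having the closed cover $C_1,\dots,C_{t+1}$, the hypothesis $t(p-1)<2m$ is unchanged. Applying \cref{thm:Zp-LS} therefore gives an index $i$ and a point $\bx$ such that the full orbit $\{\bx,\zeta\bx,\dots,\zeta^{p-1}\bx\}$ lies in $C_i$. Since $C_i\subseteq U_i$, the orbit lies in $U_i$, which is the claim.

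There is essentially no obstacle here. The only point to check is that the shrinking produces a cover by \emph{closed} sets indexed by the same index set, so that the count $t+1$, and hence the dimension inequality, is preserved; the Shrinking Lemma guarantees exactly this. If one prefers a self-contained argument, an explicit shrinking is also available. By compactness, the cover has a Lebesgue number $\lambda>0$. Set $C_i\coloneqq\{\bx:\dist(\bx,\SSS^{2m-1}\setminus U_i)\ge\lambda/2\}$, with the convention that $C_i=\SSS^{2m-1}$ if $U_i$ is the whole sphere. Each $C_i$ is closed and contained in $U_i$. Every $\bx$ has its $\lambda/2$-ball inside some $U_i$, hence $\dist(\bx,\SSS^{2m-1}\setminus U_i)\ge\lambda/2$, so $\bx\in C_i$. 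Thus the $C_i$ cover the sphere, and the argument above applies.
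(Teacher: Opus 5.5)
Your proposal matches the paper's argument: shrink the open cover to a closed cover $C_i\subseteq U_i$ with the same index set via the Shrinking Lemma, apply \cref{thm:Zp-LS}, and note that the resulting full orbit lies in $C_i\subseteq U_i$. Your optional explicit Lebesgue-number shrinking is also valid, with one small fix: use threshold $\lambda/3$ instead of $\lambda/2$, since a ball of radius $\lambda/2$ can have diameter exactly $\lambda$, whereas the Lebesgue property is only guaranteed for diameter strictly less than $\lambda$.
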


\subsection{Additive combinatorics}

Our proof also uses Fourier analysis over the finite field $\FF_p$.
Fix a prime $p$. For a function $f:\FF_p\to\CC$, define its Fourier transform by
\[
\widehat f(\xi) \coloneqq \frac{1}{p}\sum_{x\in\FF_p} f(x)\,e_p(-x\xi),
\qquad \xi\in\FF_p,
\]
where $e_p(t) = e^{2\pi i t/p}$.
We will repeatedly use the following basic facts.

\begin{fact}\label{fact:fourier}
The following standard facts hold.
\begin{enumerate}
    \item \textrm{Fourier inversion.}
    For every $x\in\FF_p$, $f(x)  = \sum_{\xi\in\FF_p} \widehat f(\xi)\, e_p(\xi x)$.

    \item \textrm{Parseval identity.}
    For every $f:\FF_p\to\CC$, $\sum_{x\in\FF_p} |f(x)|^2
     = 
    p\sum_{\xi\in\FF_p} |\widehat f(\xi)|^2$.
\end{enumerate}
\end{fact}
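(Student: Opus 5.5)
Both parts of \cref{fact:fourier} are direct computations from the definition of $\widehat f$. The only ingredient is the orthogonality relation
\[
\sum_{x\in\FF_p} e_p(x\eta)=\begin{cases} p, & \eta=0,\\ 0, & \eta\neq 0,\end{cases}
\]
which I would check first. For $\eta=0$ it is immediate. For $\eta\ne0$ the sum $S$ satisfies $e_p(\eta)S=S$, because $x\mapsto x+1$ permutes $\FF_p$. Since $p$ is prime, $e_p(\eta)\neq1$ for $\eta\not\equiv 0$, and therefore $S=0$.

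For Fourier inversion, I substitute the definition of $\widehat f$ into the right-hand side and swap the two finite sums:
\[
\sum_{\xi}\widehat f(\xi)e_p(\xi x)=\frac1p\sum_{y}f(y)\sum_{\xi}e_p(\xi(x-y)).
\]
By orthogonality the inner sum equals $p\cdot\mathbf 1_{y=x}$, so the whole expression reduces to $f(x)$.

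For Parseval, I expand
\[
p\sum_\xi|\widehat f(\xi)|^2=\frac1p\sum_{x,y}f(x)\overline{f(y)}\sum_\xi e_p(\xi(y-x)).
\]
Orthogonality again collapses the inner sum to $p\cdot\mathbf 1_{x=y}$, which leaves $\sum_x|f(x)|^2$. Alternatively, Parseval follows from inversion by writing $|f(x)|^2=f(x)\overline{f(x)}$, applying inversion to one of the factors, and summing over $x$.

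None of these steps presents a real obstacle; all sums are finite, so the interchanges need no justification. The only point that needs care is the normalization. With the $\frac1p$ placed in $\widehat f$, inversion carries no factor, and Parseval gets the factor $p$ exactly as stated in \cref{fact:fourier}.
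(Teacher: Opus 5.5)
Your proof is correct. The paper records these identities as standard facts and gives no proof of its own. Your orthogonality computation is the standard argument, and it uses the same relation $\frac1p\sum_{\xi}e_p((x-y)\xi)=1_{\{y\}}(x)$ that the paper invokes in the proof of \cref{lem:solution--counting}. One minor point: primality of $p$ is not needed to get $e_p(\eta)\neq 1$, since this holds for every $\eta\not\equiv 0 \pmod p$.
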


As a corollary, we have the following solution--counting formula.
\begin{lemma}
    \label{lem:solution--counting}
    Let $A\subseteq \FF_p$, and let $c_1,\dots,c_k\in\FF_p$ be coefficients. For each fixed $y\in\FF_p$,
    the number of solutions to the linear equation
    $$\sum_{i = 1}^k c_i x_i = y, \quad\text{where }~(x_1, \cdots, x_k) \in A^k$$
    is given by
    $$N(y) = p^{k - 1} \sum_{\xi \in \FF_p} \prod_{i = 1}^k \widehat{1_A}(c_i \xi) e_p(y \xi).$$
\end{lemma}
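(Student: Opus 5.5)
We prove \cref{lem:solution--counting} by expressing the solution count as a sum over $A^k$ of the indicator of the linear condition, and then expanding that indicator by orthogonality of characters. The relevant identity is
\[
\frac1p\sum_{\xi\in\FF_p} e_p(\xi t)=\mathbf 1[t=0]\qquad (t\in\FF_p),
\]
which follows from Fourier inversion (\cref{fact:fourier}) applied to the delta function at $0$, or equivalently from summing a geometric series of $p$-th roots of unity.

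Apply this with $t=\sum_i c_i x_i-y$, where the sign of the exponent is chosen to match the statement. This gives
\[
N(y)=\sum_{(x_1,\dots,x_k)\in A^k}\frac1p\sum_{\xi\in\FF_p} e_p\Bigl(-\xi\bigl(\textstyle\sum_i c_ix_i - y\bigr)\Bigr).
\]
Next, interchange the two finite sums and factor the exponential over the coordinates:
\[
N(y)=\frac1p\sum_{\xi} e_p(y\xi)\prod_{i=1}^k\Bigl(\sum_{x_i\in\FF_p}1_A(x_i)\,e_p(-x_i c_i\xi)\Bigr).
\]
Each inner sum equals $p\,\widehat{1_A}(c_i\xi)$ by the definition of the Fourier transform. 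Collecting the powers of $p$ gives $p^{k}/p=p^{k-1}$, which is exactly the claimed formula.

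There is no real obstacle here. The only point that needs care is bookkeeping: the sign convention $e_p(-x\xi)$ in $\widehat f$ forces us to use $e_p(-\xi t)$ in the orthogonality relation, so that the factor $e_p(y\xi)$ appears with the sign shown in the statement. The coefficients $c_i$ are taken in $\FF_p$, so $c_i\xi$ is an element of $\FF_p$ and nothing further needs checking.
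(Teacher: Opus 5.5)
Your proof is correct and is essentially the paper's argument: both expand the indicator of $\sum_i c_ix_i=y$ by orthogonality of characters, swap the finite sums, and factor over coordinates. The only difference is cosmetic. You take the sign $e_p(-\xi t)$ from the start, whereas the paper uses $e_p(\xi t)$ and then substitutes $\xi\mapsto-\xi$ at the end.
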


\begin{proof}
By Fourier inversion formula,
$1_{\{y\}}(x)=\frac1p\sum_{\xi\in\FF_p} e_p((x-y)\xi)$.
Therefore,
\[
N(y)= \sum_{(x_1,\dots,x_k)\in A^k}
1_{\{y\}}\!\left(\sum_{i=1}^k c_i x_i\right)
=\frac1p\sum_{\xi\in\FF_p} e_p(-y\xi)
\prod_{i=1}^k \sum_{x_i\in A} e_p(c_i x_i\xi).
\]
By the definition of the Fourier transform, this equals
\[
p^{k-1}\sum_{\xi\in\FF_p}
\Bigl(\prod_{i=1}^k \widehat{1_A}(-c_i\xi)\Bigr)e_p(-y\xi),
\]
and replacing $\xi$ by $-\xi$ yields the desired formula.
\end{proof}

We also need a supersaturated version of Roth's theorem. This follows from the arithmetic removal lemma of Green~\cite{green2005szemeredi}, which reads as follows.
\begin{theorem}
    \label{thm:ARL}
    Let $k \geq 3$ be a fixed integer. For any $\eps > 0$, there exists $\delta > 0$ such that the following holds.
    Let $G$ be an abelian group with size $N$, and suppose that $A_1, \cdots, A_k$ are subsets of $G$
    such that there are at most $\delta N^{k - 1}$ solutions to the equation $a_1 + \cdots + a_k = 0$ with $a_i \in A_i$ for all $i$. Then one may remove at most $\eps N$ elements from each $A_i$ to obtain subsets $A_i'$ with no solutions
    to $a_1 + \cdots + a_k = 0$ with $a_i \in A_i'$ for all $i$.
\end{theorem}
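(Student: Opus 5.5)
The plan is to derive this from the graph removal lemma for the cycle $C_k$, following the approach of Kr\'al', Serra and Vena, rather than redoing Green's Fourier-analytic regularity argument. The graph removal lemma (a consequence of Szemer\'edi's regularity lemma) is used as a black box. It says: for every $\eps'>0$ there is $\delta'>0$ such that any graph on $M$ vertices with at most $\delta' M^k$ copies of $C_k$ can be made $C_k$-free by deleting at most $\eps' M^2$ edges.

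\textbf{Construction.} Build a $k$-partite graph $\Gamma$ with parts $V_1,\dots,V_k$, each a copy of $G$; indices are cyclic mod $k$. For each $i$, join $y_i\in V_i$ to $y_{i+1}\in V_{i+1}$ if and only if $y_i-y_{i+1}\in A_i$, and give this edge the label $y_i-y_{i+1}$. Then $\Gamma$ has $M=kN$ vertices.

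\textbf{Cycles versus solutions.} A $k$-cycle through all parts in cyclic order, $y_1y_2\cdots y_ky_1$, gives elements $a_i=y_i-y_{i+1}\in A_i$. These telescope to $a_1+\cdots+a_k=0$, so the cycle yields a solution. Conversely, each solution $(a_1,\dots,a_k)$ yields exactly $N$ such cycles: $y_1$ is chosen freely and $y_{i+1}=y_i-a_i$ is then forced.

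Since $\Gamma$ is $k$-partite with edges only between cyclically consecutive parts, every $k$-cycle either visits the parts in this cyclic order or backtracks. One must check that the backtracking ones are either impossible or also correspond to solutions (for instance when $k$ is even). If this check causes trouble, I would instead count homomorphic copies of the oriented cycle, or use the ``cyclic-order'' variant of the removal lemma, which also follows from regularity.

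The number of relevant cycles is therefore $N\cdot\#\{\text{solutions}\}\le \delta N^k$, which is $\delta k^{-k}M^k$. Choosing $\delta\coloneqq\delta' k^{-k}$, where $\delta'$ comes from the removal lemma with $\eps'\coloneqq\eps/k^3$, we may delete a set $D$ of at most $\eps' k^2N^2$ edges so that no such cycle remains.

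\textbf{Pulling back to sets.} For each $i$, the label $a\in A_i$ is carried by exactly $N$ edges between $V_i$ and $V_{i+1}$. Define $A_i'$ by removing from $A_i$ every $a$ for which at least $N/k$ of its edges lie in $D$. By counting, at most $|D|/(N/k)\le \eps' k^3N=\eps N$ elements are removed from each $A_i$.

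Suppose a solution $(a_1,\dots,a_k)$ with $a_i\in A_i'$ survived. Its $N$ cycles are parametrized by $y_1$, and each edge labeled $a_i$ between $V_i$ and $V_{i+1}$ lies in exactly one of them, since $y_i$ determines $y_1$. So each deleted edge destroys at most one of these cycles. Fewer than $k\cdot N/k=N$ edges of the relevant labels were deleted, so some cycle survives in $\Gamma\setminus D$, a contradiction.

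\textbf{Main obstacle.} The substantive input is the graph removal lemma itself. Beyond that, the one point needing care is the cycle bookkeeping: making sure the removal lemma is applied to the cycles that actually encode solutions, with the correct orientation and no degenerate backtracking cycles. Everything else is routine counting.
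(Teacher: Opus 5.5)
The paper does not prove this statement; it imports it from Green~\cite{green2005szemeredi}, whose argument goes through a Fourier-analytic regularity lemma for abelian groups. You instead use the Kr\'al'--Serra--Vena reduction to a graph removal lemma. This is a valid, different route, provided you commit to your fallback rather than leave the parity question open.

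The map $V_i\mapsto i$ is a homomorphism $\Gamma\to C_k$, so a $k$-cycle in $\Gamma$ lifts to a walk of $k$ steps $\pm1$ in $\ZZ$ ending at some $wk$.
\begin{itemize}
\item For odd $k$, parity forces $w=\pm1$. Every $k$-cycle is then canonical, and the undirected $C_k$ removal lemma works exactly as you wrote it.
\item For even $k$, the cycles with $w=0$ do not encode solutions, and they can be abundant. For $k=4$, the $4$-cycles inside $V_1\cup V_2$ correspond, up to a bounded factor and $O(N|A_1|^2)$ degenerate walks, to $N$ times the number of additive quadruples $a+c=b+d$ in $A_1$. That quantity is at least $|A_1|^4$. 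So the hypothesis of the undirected removal lemma fails for dense $A_1$, however few solutions there are.
\end{itemize}
Orienting every edge from $V_i$ to $V_{i+1}$ fixes this. A directed $k$-cycle must advance the part index at each step, so it is exactly one of the $N\cdot\#\{\text{solutions}\}$ canonical cycles. The Alon--Shapira directed removal lemma, or the partite ``canonical copies'' version obtained from regularity plus counting, then supplies $D$.

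The rest of your bookkeeping is correct. Each label is carried by exactly $N$ edges. The threshold $N/k$ removes at most $|D|k/N\le\eps' k^3N=\eps N$ elements from each $A_i$. Each deleted edge lies in at most one of the $N$ cycles of a surviving solution.

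As for what each route buys: your reduction is short once a removal lemma is granted, uses no Fourier analysis, and extends to all finite groups, abelian or not, which was the point of Kr\'al'--Serra--Vena. Green's proof is intrinsically arithmetic: it regularizes with respect to characters rather than a vertex partition, which is closer in spirit to the Bohr-set argument of Section~\ref{sec:b-implies-a}. Both give only tower-type dependence of $\delta$ on $\eps$. That is harmless here, since \cref{cor:Roth--supersaturation} uses $\delta$ only qualitatively.
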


\begin{corollary}
    \label{cor:Roth--supersaturation}
    Let $\cL:\sum_{i=1}^k c_i x_i=0$ be a homogeneous linear equation with $k\ge 3$, and $\sum_{i = 1}^k c_i = 0$.
    For any $\eps > 0$, there exists $\delta = \delta(\eps, \cL) > 0$ such that the following holds.
    For any prime $p$ and any $A \subseteq \FF_p$ with size at least $\eps p$,  there exist at least $\delta p^{k - 1}$ solutions to $\cL$ in $A^k$.
\end{corollary}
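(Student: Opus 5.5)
We deduce the supersaturation statement of \cref{cor:Roth--supersaturation} from the arithmetic removal lemma, \cref{thm:ARL}, by contradiction. Fix $\eps>0$ and set $N=p$, $G=\FF_p$. First we dispose of small primes. If $p\le P_0(\eps,\cL)$, then $A\neq\varnothing$ (as $|A|\ge\eps p>0$), and for any $a\in A$ the constant tuple $(a,\dots,a)$ solves $\cL$ because $\sum c_i=0$. So there is at least one solution, which is $\ge \delta p^{k-1}$ once $\delta\le P_0^{-(k-1)}$. We also take $p$ larger than $\max_i|c_i|$, so that every $c_i$ is invertible in $\FF_p$. The statement counts solutions in $A^k$ and allows trivial ones, so no distinctness issue arises.

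For large $p$, set $A_i\coloneqq c_iA=\{c_ia:a\in A\}$. Since $c_i$ is invertible, $|A_i|=|A|\ge\eps p$, and solutions of $\cL$ in $A^k$ correspond bijectively to solutions of $a_1+\dots+a_k=0$ with $a_i\in A_i$. Apply \cref{thm:ARL} with parameter $\eps'\coloneqq \eps/(2k)$, and let $\delta_0$ be the resulting constant. Suppose there were fewer than $\delta_0p^{k-1}$ solutions. Then we may delete at most $\eps'p$ elements from each $A_i$ to obtain $A_i'$ with no solution at all.

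The key step is to produce a surviving solution among the $A_i'$, contradicting this. Pull back by setting $B_i\coloneqq c_i^{-1}A_i'\subseteq A$. Then $|A\setminus B_i|\le\eps'p$, so $B\coloneqq\bigcap_i B_i$ satisfies $|B|\ge |A|-k\eps'p\ge \eps p/2>0$. Pick $b\in B$. The tuple $(c_1b,\dots,c_kb)$ lies in $A_1'\times\cdots\times A_k'$ and sums to $(\sum_i c_i)b=0$. This contradicts the conclusion of \cref{thm:ARL}.

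It follows that there are at least $\delta_0p^{k-1}$ solutions, and we take $\delta=\min(\delta_0,P_0^{-(k-1)})$. The only real point is the observation that the diagonal solutions $(b,\dots,b)$, which exist precisely because $\sum c_i=0$, are numerous enough to be destroyed only by deleting a linear number of elements. Everything else is bookkeeping: the rescaling $A_i=c_iA$, and the finitely many small primes where some $c_i$ vanishes modulo $p$.
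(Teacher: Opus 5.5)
Your proposal is correct and follows the paper's proof: rescale to $A_i=c_iA$, apply \cref{thm:ARL} with $\eps/(2k)$, and reach a contradiction because the diagonal tuples $(c_1b,\dots,c_kb)$, $b\in A$, cannot all be destroyed by deleting $\eps p/(2k)$ elements from each $A_i$. Your intersection bound $|B|\ge \eps p/2$ is the paper's pigeonhole step rephrased, and your use of the constant solution $(a,\dots,a)$ makes explicit the small-prime case that the paper calls trivial.
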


\begin{proof}
    By setting $\delta$ sufficiently small, the result trivially holds when $p \le \max_i |c_i|$. Thus we may assume that $p > \max_i |c_i|$.
    Apply \cref{thm:ARL} with $\eps' = \frac{\eps}{2k}$ to the sets $A_i = c_i A$. If the equation $\cL$ has at most $\delta p^{k - 1}$ solutions in $A^k$, then the equation
    $$x_1 + \cdots + x_k = 0$$
    has at most $\delta p^{k - 1}$ solutions in $A_1 \times \cdots \times A_k$. The arithmetic removal lemma shows that we can remove $\eps' p$ elements from each $A_i$ to obtain sets $A_i'$, such that  $A_1' \times \cdots \times A_k'$ has no solution to $x_1 + \cdots + x_k = 0$. This is impossible, since for each $a \in A$ we must remove at least one element in $(c_1 a, \cdots, c_k a)$, which means that we must remove at least $\frac{\eps}{k} p$ elements from one $A_i$, a contradiction.
\end{proof}

Finally, we state a simple lemma that upper bounds the number of solutions of a linear equation. 

\begin{lemma}
    \label{thm:upper--bound}
    Let $\cL:\sum_{i=1}^k c_i x_i= y$ be a linear equation with nonzero coefficients.
    Then the number of solutions to $\cL$ in $\FF_p^k$ is at most $p^{k - 1}$.
    Moreover, the number of solutions with at least two equal coordinates is at most $k^2 p^{k - 2}$, with the only exception equivalent to $x_1-x_2=0$.
\end{lemma}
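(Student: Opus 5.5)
The first claim is elementary. Since all $c_i$ are nonzero, I may assume $p\nmid c_k$; otherwise the statement is degenerate over $\FF_p$, and the lemma is applied only for $p>\max_i|c_i|$. For each choice of $(x_1,\dots,x_{k-1})\in\FF_p^{k-1}$, the equation $c_kx_k=y-\sum_{i<k}c_ix_i$ has exactly one solution $x_k$, because $c_k$ is invertible in $\FF_p$. Hence the solution set has exactly $p^{k-1}$ elements, and in particular at most $p^{k-1}$. (If some $c_i$ vanished mod $p$, I would solve for a different coordinate whose coefficient is invertible.)

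For the second claim I would use a union bound over the $\binom{k}{2}\le k^2$ pairs $\{i,j\}$ with $i<j$, counting the solutions with $x_i=x_j$. Substituting $x_j=x_i$ gives a new linear equation in the $k-1$ variables $\{x_\ell\}_{\ell\ne j}$, in which the coefficient of $x_i$ becomes $c_i+c_j$ and the other coefficients are unchanged. There are two cases.

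If some coefficient of the reduced equation is nonzero mod $p$, I solve for that variable as above. The reduced equation then has at most $p^{k-2}$ solutions, and each such solution determines the original tuple. If instead every coefficient vanishes, then $c_\ell\equiv0$ for all $\ell\ne i,j$ and $c_i+c_j\equiv 0$. Since all $c_\ell$ are nonzero (and $p$ is large relative to them), this forces $k=2$ and the equation to be a multiple of $x_1-x_2=y$. That is exactly the stated exception. Apart from it, summing over pairs gives at most $k^2p^{k-2}$ solutions with a repeated coordinate.

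There is no real obstacle here. The only care needed is bookkeeping. First, the reduction of coefficients mod $p$ should be handled by assuming $p>\max|c_i|$, as in the proof of \cref{cor:Roth--supersaturation}. Second, the exceptional case should be identified precisely: it is the one in which collapsing the repeated pair kills the whole equation, namely the canceling-pair equation $x_1-x_2=0$ (with $y=0$, where it has $p$ solutions, all with equal coordinates).
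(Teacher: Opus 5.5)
Your proposal is correct. There is nothing in the paper to compare it against: the lemma is called ``simple'' and stated without proof. Your two steps are the standard way to fill it in: first solve for a variable whose coefficient is invertible mod $p$, then union-bound over the $\binom{k}{2}$ pairs, collapsing $x_j=x_i$ into a $(k-1)$-variable equation.

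You are right that the lemma tacitly needs $p>\max_i|c_i|$. If every $c_i\equiv 0 \pmod p$ and $y=0$, there are $p^k$ solutions. The paper only invokes the lemma under that assumption.

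One small tidy-up concerns the degenerate case $k=2$, $c_1+c_2\equiv 0 \pmod p$. There the collapsed equation is $0=y$, which has no solutions when $y\neq 0$. So only $y=0$ is a genuine exception, as your closing parenthetical already notes.
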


\subsection{Probabilistic methods}

This subsection collects the probabilistic ingredients used later.
We need a two-dimensional Berry--Esseen bound over convex sets, which is a special case of \cite[Theorem 1.1]{bentkus2005lyapunov}.

\begin{lemma}\label{lem:2D-BE}
Let $X_1,\dots,X_n$ be independent random vectors in $\mathbb R^2$ and define
\[
X\coloneqq\sum_{i=1}^n X_i,\qquad \mu\coloneqq\EE X,\qquad \Sigma\coloneqq\mathrm{Cov}(X).
\]
Assume that $\Sigma$ is positive definite, let $M = \Sigma^{1/2}$ denote the unique symmetric positive definite square root of $\Sigma$,
and let $Z\sim N(\mu,\Sigma)$.

Then there exists an absolute constant $\beta>0$ such that for every convex set
$A\subseteq\mathbb R^2$,
\[
\big|\mathbb P(X\in A)-\mathbb P(Z\in A)\big|
 \le
\beta\sum_{i=1}^n 
\mathbb E\Big\|M^{-1}\big(X_i-\mathbb EX_i\big)\Big\|^3.
\]
\end{lemma}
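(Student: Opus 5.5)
The plan is to deduce the statement from the general multivariate Lyapunov-type bound of Bentkus~\cite[Theorem 1.1]{bentkus2005lyapunov} by an affine normalization. That theorem is stated for sums of independent, mean-zero random vectors in $\RR^d$ whose sum has identity covariance. In that setting it bounds the discrepancy from the standard Gaussian $N(0,I_d)$, uniformly over all convex sets, by $c\,d^{1/4}\sum_i \EE\norm{Y_i}^3$, where $c$ is an absolute constant. So the only work is to put $X$ into this normalized form and to check that the class of convex sets and the Gaussian comparison are preserved.

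\textbf{Step 1 (centering and whitening).} Set $Y_i\coloneqq M^{-1}(X_i-\EE X_i)$ for $i\in[n]$. These are independent and mean zero. Their sum is $S\coloneqq\sum_i Y_i = M^{-1}(X-\mu)$, and
\[
\mathrm{Cov}(S)=M^{-1}\Sigma M^{-1}=I_2,
\]
because $M$ is the symmetric square root of $\Sigma$. This is the only place where positive definiteness of $\Sigma$ is used: it makes $M$ invertible.

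\textbf{Step 2 (transport of events).} Let $T(x)\coloneqq M^{-1}(x-\mu)$. This is an invertible affine map of $\RR^2$, so it sends convex sets bijectively onto convex sets. Hence $\PP(X\in A)=\PP(S\in T(A))$. Similarly, if $Z\sim N(\mu,\Sigma)$, then $T(Z)\sim N(0,I_2)$, and $\PP(Z\in A)=\PP(T(Z)\in T(A))$. Therefore
\[
\bigl|\PP(X\in A)-\PP(Z\in A)\bigr| = \bigl|\PP(S\in A')-\PP(G\in A')\bigr|,
\]
where $A'=T(A)$ is convex and $G$ is standard Gaussian in $\RR^2$.

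\textbf{Step 3 (apply Bentkus).} Applying Bentkus's theorem with $d=2$ bounds the right-hand side by $c\,2^{1/4}\sum_i\EE\norm{Y_i}^3$. This is exactly the claimed bound with $\beta\coloneqq c\,2^{1/4}$, which is an absolute constant.

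The only point needing care is matching hypotheses and normalizations with the cited theorem. Specifically, Bentkus's theorem requires the identity covariance of the sum, not of each summand, and allows arbitrary (not identically distributed) summands. If a third moment is infinite, the bound is trivial. I therefore expect no real obstacle beyond verifying this bookkeeping.
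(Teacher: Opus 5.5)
Your proposal is correct and is essentially the paper's approach. The paper gives no argument beyond noting that the lemma is a special case of Bentkus's Theorem~1.1; your centering and whitening by $M^{-1}$, the affine transport of convex sets and of $N(\mu,\Sigma)$ to $N(0,I_2)$, and the choice $\beta=c\,2^{1/4}$ are exactly the routine reduction that this citation leaves implicit.
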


We will use it through a convenient corollary tailored to bounded summands
and a well-conditioned covariance matrix.

\begin{corollary}\label{cor:2D-BE-convenient}
In the setting of Lemma~\ref{lem:2D-BE}, assume moreover that
$\|X_i-\mathbb EX_i\|\le 1$ almost surely for all $i$, and that
\[
\lambda_{\min}(\Sigma) \ge \sigma^2 n
\]
for some constant $\sigma>0$ independent of $n$.
Then for every convex set $A\subseteq\mathbb R^2$,
\[
\big|\mathbb P(X\in A)-\mathbb P(Z\in A)\big|
\le
\frac{\beta}{\sigma^3}\cdot \frac1{\sqrt n}.
\]
\end{corollary}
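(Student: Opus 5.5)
Since $\|X_i-\EE X_i\|\le 1$ almost surely, the plan is to reduce each summand in \cref{lem:2D-BE} to a deterministic bound through the operator norm of $M^{-1}$, and then sum over $i$. Concretely, for each $i$ we have
\[
\bigl\|M^{-1}(X_i-\EE X_i)\bigr\|\le \|M^{-1}\|_{\mathrm{op}}\,\|X_i-\EE X_i\|\le \|M^{-1}\|_{\mathrm{op}} \quad\text{almost surely.}
\]

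The next step is to bound $\|M^{-1}\|_{\mathrm{op}}$. Because $M=\Sigma^{1/2}$ is symmetric positive definite, its eigenvalues are the square roots of the eigenvalues of $\Sigma$. It follows that
\[
\|M^{-1}\|_{\mathrm{op}}=\lambda_{\min}(\Sigma)^{-1/2}\le (\sigma^2 n)^{-1/2}=\frac{1}{\sigma\sqrt n}.
\]
Combining this with the pointwise bound, each expectation $\EE\|M^{-1}(X_i-\EE X_i)\|^3$ is at most $(\sigma\sqrt n)^{-3}=\sigma^{-3}n^{-3/2}$.

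Finally, we sum over the $n$ indices. This gives a total of $n\cdot \sigma^{-3}n^{-3/2}=\sigma^{-3}n^{-1/2}$. Multiplying by $\beta$ and invoking \cref{lem:2D-BE} then yields the stated bound for every convex set $A$.

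There is no real obstacle in this argument. The only point needing care is the identification of $\|M^{-1}\|_{\mathrm{op}}$ with $\lambda_{\min}(\Sigma)^{-1/2}$. This holds because $M$ is the symmetric square root of $\Sigma$, so $M$ and $\Sigma$ are simultaneously diagonalizable.
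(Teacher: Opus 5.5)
Your proposal is correct and matches the paper's proof: bound the operator norm of $M^{-1}$ by $(\sigma\sqrt n)^{-1}$ using $\lambda_{\min}(\Sigma)\ge\sigma^2 n$, get an almost-sure bound on each $\|M^{-1}(X_i-\EE X_i)\|$, and sum the $n$ cubed terms to obtain $\sigma^{-3}n^{-1/2}$ before invoking Lemma~\ref{lem:2D-BE}. The paper phrases the operator-norm step as $\lambda_{\max}(M^{-1})\le(\sigma\sqrt n)^{-1}$, which is the same fact you justify.
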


\begin{proof}
Since $\lambda_{\min}(\Sigma)\ge \sigma^2 n$, we have $\lambda_{\max}\left(M^{-1}\right)\le (\sigma\sqrt n)^{-1}$. Hence, using $\|X_i-\mathbb EX_i\|\le 1$ a.s., we have
\[
\Big\|M^{-1}\big(X_i-\mathbb EX_i\big)\Big\|
\le\frac1{\sigma\sqrt n}\quad\text{a.s.},
\]
and therefore
\[
\sum_{i=1}^n \mathbb E\Big\|M^{-1}\big(X_i-\mathbb EX_i\big)\Big\|^3
\le n\cdot \Big(\frac1{\sigma\sqrt n}\Big)^3
=\frac1{\sigma^3\sqrt n}.
\]
The claim follows from Lemma~\ref{lem:2D-BE}.
\end{proof}

\medskip

We will also need a uniform, dimension-free lower bound for certain Gaussian rectangle
probabilities when the covariance is well-conditioned at scale $n$. Recall that $\Phi$ and $\varphi$ denote the standard normal distribution function and density function respectively.

\begin{lemma}\label{lem:gauss-rect-const}
Let $n\ge 1$ and let $\Sigma$ be a $2\times 2$ positive definite matrix such that
\[
\Sigma=
\begin{bmatrix}
    \Sigma_{11} & \Sigma_{12}\\
    \Sigma_{12} & \Sigma_{22}
\end{bmatrix},
\quad cn\le \lambda_{\min}(\Sigma)\le \lambda_{\max}(\Sigma)\le Cn,
\]
for some constants $0<c\le C$ independent of $n$.
Let $Z\sim N(\mu,\Sigma)$ where $\mu=(\mu_1,\mu_2)$ and set $t\coloneqq r\sqrt n$ with $r\ge 1$. Then
\[
\mathbb P\big(Z\in(-\infty,\mu_1-t]\times(-\infty,\mu_2-t]\big) \ge \alpha_r(c,C),
\]
where one may take the explicit constant
\[
\alpha_r(c,C)
\coloneqq
\big(\Phi(a)-\Phi(2a)\big)\cdot 
\Phi\!\Big(\frac{a(1+2\rho_0)}{\sqrt{1-\rho_0^2}}\Big),
\qquad
a\coloneqq-\frac{r}{\sqrt c},
\qquad
\rho_0\coloneqq\sqrt{1-\Big(\frac{c}{C}\Big)^2}.
\]
\end{lemma}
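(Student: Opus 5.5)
The plan is to reduce to a standardized bivariate Gaussian and bound the correlation using the eigenvalue assumptions; the probability then factors by conditioning on the first coordinate.

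\textbf{Step 1: standardize.} Put $W_j\coloneqq (Z_j-\mu_j)/\sqrt{\Sigma_{jj}}$ for $j=1,2$. Then $(W_1,W_2)$ is a centered Gaussian vector with unit variances and correlation $\rho=\Sigma_{12}/\sqrt{\Sigma_{11}\Sigma_{22}}$. The event in question is $\{W_1\le -t/\sqrt{\Sigma_{11}},\,W_2\le -t/\sqrt{\Sigma_{22}}\}$. Since $\Sigma_{jj}\ge \lambda_{\min}(\Sigma)\ge cn$, each threshold satisfies $-t/\sqrt{\Sigma_{jj}}\ge -r\sqrt n/\sqrt{cn}=a$. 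Because $a<0$, the probability is at least $\PP(W_1\le a,\,W_2\le a)$.

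\textbf{Step 2: bound the correlation.} We have
\[
1-\rho^2=\frac{\det\Sigma}{\Sigma_{11}\Sigma_{22}}\ge\frac{\lambda_{\min}\lambda_{\max}}{\lambda_{\max}^2}=\frac{\lambda_{\min}}{\lambda_{\max}}\ge \frac cC\ge\Big(\frac cC\Big)^2 .
\]
Here we used $\Sigma_{jj}\le\lambda_{\max}$ and $c\le C$. Hence $|\rho|\le\rho_0$.

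\textbf{Step 3: condition on $W_1$.} Write $W_2=\rho W_1+\sqrt{1-\rho^2}\,G$ with $G$ standard normal and independent of $W_1$. Then
\[
\PP(W_1\le a,\,W_2\le a)\ \ge\ \PP(W_1\in[2a,a],\,W_2\le a)=\EE\Big[\mathbf 1_{W_1\in[2a,a]}\,\Phi\Big(\frac{a-\rho W_1}{\sqrt{1-\rho^2}}\Big)\Big].
\]
For $w\in[2a,a]$ we have $|w|\le 2|a|$. This gives $a-\rho w\ge a-2|a|\rho_0=a(1+2\rho_0)$, which is negative. Dividing a negative number by the larger denominator $\sqrt{1-\rho^2}\ge\sqrt{1-\rho_0^2}$ only increases it, so
\[
\frac{a-\rho w}{\sqrt{1-\rho^2}}\ \ge\ \frac{a(1+2\rho_0)}{\sqrt{1-\rho_0^2}}.
\]
Since $\Phi$ is monotone, the integrand is at least $\Phi\big(a(1+2\rho_0)/\sqrt{1-\rho_0^2}\big)$ on the event $\{W_1\in[2a,a]\}$. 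That event has probability $\Phi(a)-\Phi(2a)$, which gives exactly $\alpha_r(c,C)$.

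The only delicate points are the sign bookkeeping in Step 3, namely that $a<0$ throughout, and the degenerate case $\rho_0=1$. The latter cannot occur, because $c>0$ forces $\rho_0<1$. Nothing here depends on $n$, which yields the dimension-free bound.
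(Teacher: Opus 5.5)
Your proof is correct and follows the paper's argument essentially step for step. Like the paper, you standardize, bound $|\rho|\le\rho_0$ via $1-\rho^2=\det\Sigma/(\Sigma_{11}\Sigma_{22})$, and lower-bound the conditional integrand uniformly for $W_1\in[2a,a]$. The only harmless difference is that your Step 2 obtains the sharper bound $1-\rho^2\ge c/C$ before weakening it to $(c/C)^2$, whereas the paper goes directly through $\det\Sigma\ge c^2n^2$ and $\Sigma_{11}\Sigma_{22}\le C^2n^2$.
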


\begin{proof}
Let $Z=(Z_1,Z_2)\sim N(\mu,\Sigma)$ where $\mu=(\mu_1,\mu_2)$. Write $\xi_1\coloneqq(Z_1-\mu_1)/\sqrt{\Sigma_{11}}$ and $\xi_2\coloneqq(Z_2-\mu_2)/\sqrt{\Sigma_{22}}$.
Then $(\xi_1,\xi_2)$ is centered Gaussian with unit variances and correlation
$\rho\coloneqq\mathrm{Cov}(\xi_1,\xi_2)=\Sigma_{12}/\sqrt{\Sigma_{11}\Sigma_{22}}$.

First, from $cn\le \Sigma_{jj}\le Cn$ and $t=r\sqrt n$, we get
\[
-\frac{t}{\sqrt{\Sigma_{jj}}} \ge-\frac{r\sqrt n}{\sqrt{cn}} = a,
\qquad j=1,2.
\]
Hence
\[
\mathbb P\big(Z_1\le \mu_1-t,~Z_2\le \mu_2-t\big)
=\mathbb P\Big(\xi_1\le -\frac{t}{\sqrt{\Sigma_{11}}},~\xi_2\le -\frac{t}{\sqrt{\Sigma_{22}}}\Big)
\ge \mathbb P(\xi_1\le a,~\xi_2\le a).
\]

Second, we bound $|\rho|$ using the eigenvalue constraints. Since
$\det(\Sigma)=\lambda_{\min}(\Sigma)\lambda_{\max}(\Sigma)\ge c^2n^2$
and $\Sigma_{11}\Sigma_{22}\le (\lambda_{\max}(\Sigma))^2\le C^2n^2$, we have
\[
\rho^2 \;=\; 1-\frac{\det(\Sigma)}{\Sigma_{11}\Sigma_{22}}
\;\le\; 1-\Big(\frac{c}{C}\Big)^2,
\quad\text{so}\quad
|\rho|\le \rho_0.
\]

Finally, using the standard conditional representation for a bivariate Gaussian,
for any $\rho\in[-\rho_0,\rho_0]$,
\[
\mathbb P(\xi_1\le a,~\xi_2\le a)
=\int_{-\infty}^a \Phi\!\Big(\frac{a-\rho x}{\sqrt{1-\rho^2}}\Big)\varphi(x)\,dx
 \ge \int_{2a}^a \Phi\!\Big(\frac{a-\rho x}{\sqrt{1-\rho^2}}\Big)\varphi(x)\,dx.
\]
For $x\in[2a,a]$ and $|\rho|\le\rho_0$, one checks
\[
\frac{a-\rho x}{\sqrt{1-\rho^2}}
 \ge \frac{a(1+2\rho_0)}{\sqrt{1-\rho_0^2}}.
\]
Therefore,
\[
\mathbb P(\xi_1\le a,~\xi_2\le a)
\ge
\big(\Phi(a)-\Phi(2a)\big)\cdot 
\Phi\!\Big(\frac{a(1+2\rho_0)}{\sqrt{1-\rho_0^2}}\Big)
=\alpha_r(c,C),
\]
as claimed.
\end{proof}

We also need the following fact for eigenvalue bounds under summation.

\begin{fact}\label{fact:eig-sum}
Let $A_1,\dots,A_n$ be $2\times 2$ symmetric positive semidefinite matrices. Then
\[
\lambda_{\min}\Big(\sum_{i=1}^n A_i\Big) \ge \sum_{i=1}^n \lambda_{\min}(A_i),
\qquad
\lambda_{\max}\Big(\sum_{i=1}^n A_i\Big) \le  \sum_{i=1}^n \lambda_{\max}(A_i).
\]
Consequently, if $X_1,\dots,X_n$ are independent random vectors in $\mathbb R^2$ and
$X=\sum_{i=1}^n X_i$, then
\[
\mathrm{Cov}(X)=\sum_{i=1}^n \mathrm{Cov}(X_i),
\]
and the same eigenvalue inequalities hold with $A_i=\mathrm{Cov}(X_i)$.
\end{fact}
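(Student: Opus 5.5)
The plan is to prove both eigenvalue inequalities through the Rayleigh quotient characterization, and then get the covariance statement from bilinearity together with independence. Symmetry is used only to write the extreme eigenvalues as extremal quadratic forms; positive semidefiniteness is not needed for the inequalities themselves, and the dimension two plays no role. For a real symmetric $2\times 2$ matrix $B$ we have
\[
\lambda_{\min}(B)=\min_{\|u\|=1}u^{\top}Bu,\qquad \lambda_{\max}(B)=\max_{\|u\|=1}u^{\top}Bu .
\]

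For the first step, fix a unit vector $u\in\RR^2$. Then $u^{\top}\bigl(\sum_i A_i\bigr)u=\sum_i u^{\top}A_iu$, and each term satisfies $\lambda_{\min}(A_i)\le u^{\top}A_iu\le \lambda_{\max}(A_i)$. Summing gives
\[
\sum_i\lambda_{\min}(A_i)\le u^{\top}\Bigl(\sum_i A_i\Bigr)u\le\sum_i\lambda_{\max}(A_i).
\]
Taking the minimum over $u$ on the left-hand inequality and the maximum over $u$ on the right-hand one yields the two claimed bounds, since $\sum_i A_i$ is again symmetric.

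For the consequence, write $X=\sum_i X_i$ and expand $\mathrm{Cov}(X)$ entrywise by bilinearity of $\mathrm{Cov}$. This gives
\[
\mathrm{Cov}(X)=\sum_{i,j}\EE\bigl[(X_i-\EE X_i)(X_j-\EE X_j)^{\top}\bigr].
\]
For $i\ne j$, independence lets the expectation factor as $\EE[X_i-\EE X_i]\,\EE[X_j-\EE X_j]^{\top}=0$. Only the diagonal terms survive, so $\mathrm{Cov}(X)=\sum_i\mathrm{Cov}(X_i)$. Each $\mathrm{Cov}(X_i)$ is symmetric and positive semidefinite, because $u^{\top}\mathrm{Cov}(X_i)u=\mathrm{Var}(u^{\top}X_i)\ge 0$. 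The first part therefore applies with $A_i=\mathrm{Cov}(X_i)$.

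There is no real obstacle. The only point needing care is that the cross terms vanish, which requires just pairwise independence and finite second moments; the latter hold in our applications because the summands are bounded.
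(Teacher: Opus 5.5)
Your proof is correct and follows the paper's argument: both eigenvalue bounds come from applying the Rayleigh quotient termwise to $u^\top\bigl(\sum_i A_i\bigr)u$, exactly as the paper does. You also justify the covariance additivity (vanishing cross terms under independence) and the positive semidefiniteness of each $\mathrm{Cov}(X_i)$, which the paper asserts without proof as standard.
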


\begin{proof}
For any unit vector $u\in\mathbb R^2$, by the Rayleigh quotient,
\[
u^\top\Big(\sum_{i=1}^n A_i\Big)u=\sum_{i=1}^n u^\top A_i u
\ge \sum_{i=1}^n \lambda_{\min}(A_i),
\]
and taking the infimum over $\|u\|=1$ gives the first inequality.
Similarly,
\[
u^\top\Big(\sum_{i=1}^n A_i\Big)u=\sum_{i=1}^n u^\top A_i u
\le \sum_{i=1}^n \lambda_{\max}(A_i),
\]
and taking the supremum over $\|u\|=1$ yields the second inequality.
\end{proof}



\section{Proof of~\cref{thm:main0} $(b) \Rightarrow (a)$}\label{sec:b-implies-a}

In this section we prove~\cref{thm:main0} in the direction $(b) \Rightarrow (a)$. Unpacking the definitions, it suffices to prove the following statement.

\begin{theorem}\label{thm:b-implies-a-reduction}
Let $\cL:\sum_{i=1}^k c_i x_i=0$ be a homogeneous linear equation with $k\ge 3$ and
$c_1,\dots,c_k\in\ZZ\setminus\{0\}$. Assume that there exists $I\subseteq [k]$ with $|I|\ge 3$
and $\sum_{i\in I} c_i=0$.
Then for every $\varepsilon>0$, there exists $C=C(\varepsilon,\cL)$ such that for any $\cL$-solution-free set $A\subseteq \FF_p$, if $|A|\ge \varepsilon p$, then 
$$\chi\!\left(\Cay(\FF_p,A)\right)\le C.$$
\end{theorem}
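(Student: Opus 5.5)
The plan is to follow the Fourier--Bohr strategy from the introduction. Let $I\subseteq[k]$ with $|I|\ge3$ and $\sum_{i\in I}c_i=0$, and write $J=[k]\setminus I$. We may assume $p$ is large in terms of $\eps$ and $\cL$, since otherwise $\chi\le p$ is bounded. Put $\eta=\eta(\eps,\cL)>0$, to be chosen small, and define the large spectrum $\mathrm{Spec}_\eta=\{\xi:|\widehat{1_A}(\xi)|\ge\eta\}$. By Parseval, $|\mathrm{Spec}_\eta|\le \eps^{-1}\eta^{-2}$. Let
\[
B=\{x\in\FF_p:\ \|x\xi/p\|_{\RR/\ZZ}\le\rho\ \text{ for all } \xi\in c\,\mathrm{Spec}_\eta,\ c\in\{c_1,\dots,c_k\}\}
\]
be the associated Bohr set, of bounded rank $d$ and small radius $\rho$. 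The goal is the key claim $|A\cap B|\le K=K(\eps,\cL)$.

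Given the key claim, the coloring is routine. Partition the torus $(\RR/\ZZ)^d$ into $\rho^{-d}$ boxes of side $\rho$, and color $x$ by the box containing $(x\xi/p)_\xi$. Any two points in the same class differ by an element of $B$. So the graph induced on each class has all its differences in $A\cap B$, and therefore has maximum degree at most $2K$. Each class then needs at most $2K+1$ colors, and in total $\chi\le\rho^{-d}(2K+1)$, which is bounded.

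To prove the key claim, I would argue by contradiction using many shifts. Suppose $|A\cap B|>K$. Fix a tuple $(x_j)_{j\in J}\in A^J$ with distinct entries, and let $y=-\sum_{j\in J}c_jx_j$. The subequation $\sum_{i\in I}c_ix_i=y$ must have no solution in $A^I$ with entries distinct and distinct from the $x_j$. By \cref{cor:Roth--supersaturation}, the homogeneous equation $\sum_{i\in I}c_ix_i=0$ has at least $\delta p^{|I|-1}$ solutions in $A^I$.

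The idea is to exploit translation invariance. If $(a_i)_{i\in I}$ solves the homogeneous subequation and $b\in B$, then shifting one coordinate by $b$ changes the value by $c_ib$. A cleaner route is to count $N_I(y)$ by \cref{lem:solution--counting}:
\[
N_I(y)=p^{|I|-1}\sum_\xi\prod_{i\in I}\widehat{1_A}(c_i\xi)e_p(y\xi).
\]
For $y$ of the form $\sum c_j x_j$ with the $x_j$ lying in a translate of $B$ structure, the large-spectrum terms are close to the $y=0$ terms, since $e_p(y\xi)\approx1$. Those terms carry mass at least about $\delta$. The small-spectrum terms are bounded by
\[
\max_{\xi\notin\mathrm{Spec}}|\widehat{1_A}(c_{i_0}\xi)|\cdot\sum_\xi|\widehat{1_A}(c_{i_1}\xi)||\widehat{1_A}(c_{i_2}\xi)|\le\eta\cdot\eps.
\]
This uses $|I|\ge3$, together with Cauchy--Schwarz and Parseval; the remaining factors are at most $1$. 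Choosing $\eta\ll\delta/\eps$ gives $N_I(y)\ge(\delta/2)p^{|I|-1}$. By \cref{thm:upper--bound}, this exceeds the degenerate solutions, and so produces a forbidden solution of $\cL$.

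To make $y$ land in the good range, I would choose the $x_j$ so that $\sum c_jx_j\in B'$ for a slightly smaller Bohr set. The natural way is to take the $x_j$ from $A\cap B$ itself: all $x_j\in A\cap B$ gives $\sum c_jx_j$ in a dilate of $B$, which is fine after adjusting radii. We need $|A\cap B|\ge|J|$ distinct elements, which the assumption $|A\cap B|>K$ provides. If $J=\varnothing$, Roth supersaturation alone already contradicts solution-freeness.

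The main obstacle is the bookkeeping in this last step. The approximation $e_p(y\xi)\approx1$ has to hold uniformly over the large spectrum, with constants independent of $p$. We also have to ensure distinctness between the $I$-coordinates and the $J$-coordinates, where \cref{thm:upper--bound}'s $k^2p^{|I|-2}$ bound suffices once $p$ is large. Finally, we must check that the dilates $c_i\xi$ are included in the Bohr set's frequencies.
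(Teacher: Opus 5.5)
This is essentially the paper's proof. It uses supersaturation for the balanced subequation $\sum_{i\in I}c_ix_i=0$ and a bounded-rank Bohr set built from the large spectrum. It then shows $|A\cap B|$ is bounded by comparing $N_I(0)$ with $N_I(y)$ for $y=-\sum_{j\in J}c_jx_j$, $x_j\in A\cap B$, using $|I|\ge 3$ and Parseval off the spectrum. Finally, it colours by discretizing the phases, which gives bounded degree inside each cell.

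Two bookkeeping points need correcting.
\begin{enumerate}
\item The Bohr frequencies must be $c_{i_0}^{-1}\mathrm{Spec}_\eta$, i.e. the $\xi$ with $c_{i_0}\xi\in\mathrm{Spec}_\eta$; this is the paper's $\Gamma=c_s^{-1}L$. The multiples $c\,\mathrm{Spec}_\eta$ will not do: with them, $\|y\xi/p\|_{\RR/\ZZ}$ is not controlled on exactly the frequencies that survive the split.
\item Cauchy--Schwarz with Parseval bounds the off-spectrum part by $\eta|A|/p\le\eta$, not by $\eta\eps$. So you need $\eta\ll\delta$ (the paper takes $\nu=\delta/6$), not merely $\eta\ll\delta/\eps$.
\end{enumerate}
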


\begin{proof} 
By taking $C$ sufficiently large, we may assume that $p > \max_i |c_i|$, so all the coefficients of $\cL$ are nonzero in $\FF_p$.

If $I = [k]$, then by Corollary \ref{cor:Roth--supersaturation} and Lemma \ref{thm:upper--bound}, there are at least $\delta p^{k - 1}$ solutions to $\cL$ in $A^k$, and at most $k^2 p^{k - 2}$ of those can have duplicate coordinates. Therefore, we have $p < k^2 \delta^{-1}$, so taking $C = k^2 \delta^{-1}$ gives the desired result. 

Now suppose $I \neq [k]$, and let $J = [k] \backslash I$. Consider the homogeneous linear equation
$$\cL_1: \sum_{i \in I} c_i x_i = 0.$$
By Corollary \ref{cor:Roth--supersaturation}, there exists $\delta > 0$ depending only on $\eps$ and $\cL_1$ such that $A^I$ contains at least $\delta p^{|I| - 1}$ solutions to $\cL_1$. Take $\nu = \frac{\delta}{6}$, and define the large spectrum
\[
L \coloneqq \Bigl\{\xi\in \FF_p:~\bigl|\widehat{1_A}(\xi)\bigr| \ge \nu \Bigr\}.
\]
By Parseval's identity (Fact \ref{fact:fourier}(2)), we have
\[
\sum_{\xi\in \FF_p} \bigl|\widehat{1_A}(\xi)\bigr|^2  = \frac{|A|}{p}  \le  1.
\]
Hence we have $\bigl|\widehat{1_A}(\xi)\bigr|\le 1$ for all $\xi\in\FF_p$, and $|L|\le \nu^{-2}$. We take an element $s \in I$, and set
\[
\Gamma\coloneqq c_s^{-1} L=\{\eta\in \FF_p:~c_s\eta\in L\}.
\]
Then $|\Gamma| = |L|\le \nu^{-2}$.

For $x \in \FF_p$, define
\[
\norm{x}_{\tau}\coloneqq\|x/p\|_{\TT}= \min_{n\in\mathbb{Z}} |x/p-n|
\]
as the distance from $x / p$ to its closest integer. We now set
$$D \coloneqq \sum_{i \in J} |c_i|, \quad \quad\rho \coloneqq \frac{1}{6\pi} D^{-1} \nu^2 \delta = \frac{1}{216\pi} D^{-1} \delta^3,$$
and consider the Bohr set
$$B = B(\Gamma, \rho) = \{x \in \FF_p: \norm{\xi x}_{\tau} \leq \rho, \forall \xi \in \Gamma\}.$$
Our key claim is the following.

\begin{claim}\label{cl:AcapB}
If $p > 6k^2 \delta^{-1}$, we have $\abs{A \cap B} < k$.
\end{claim}

\begin{poc}
Suppose for the sake of contradiction that $\abs{A \cap B} \geq k$. Then we can find $\{x_i: i\in J\}$ such that the $x_i$'s are pairwise distinct, and $x_i \in A \cap B$ for each $i \in J$. Set
$$y = -\sum_{i \in J}c_i x_i.$$
Let $N_1, N_2$ denote the number of solutions in $A^I$ to the equations
$$\sum_{i \in I} c_i x_i = 0\quad \text{and} \quad \sum_{i \in I} c_i x_i = y$$
respectively. By assumption, we have $N_1 \geq \delta p^{|I| - 1}$. Furthermore, by Lemma \ref{lem:solution--counting}, we have
$$N_1 = p^{|I| - 1} \sum_{\xi \in \FF_p} \prod_{i \in I}\widehat{1_A}(c_i \xi)\quad \text{and}\quad N_2 = p^{|I| - 1} \sum_{\xi \in \FF_p} \prod_{i \in I} \widehat{1_A}(c_i \xi) e_p(y \xi).$$
Thus, we have
$$\abs{N_1 - N_2} \leq p^{|I| - 1}  \sum_{\xi \in \FF_p} \prod_{i \in I} \abs{\widehat{1_A}(c_i \xi)} \abs{1 - e_p(y \xi)}.$$
We split the sum based on whether $\xi$ lies in $\Gamma$. If $\xi \in \Gamma$, we use the trivial bound $ \abs{\widehat{1_A}(c_i \xi)} \leq 1$ for each $i$, and use the triangle inequality to bound
$$\abs{1 - e_p(y \xi)}=2|\sin(\pi y \xi)| \leq 2\pi \norm{y \xi}_{\tau} \leq 2\pi\sum_{i \in J} \norm{c_i x_i \xi}_{\tau} \leq 2\pi\sum_{i \in J} |c_i| \norm{x_i \xi}_{\tau}.$$
As $\norm{x_i \xi}_{\tau} \leq \rho$ by assumption $x_i\in B$, we obtain
$$\abs{1 - e_p(y \xi)} \leq 2\pi\sum_{i \in J} \norm{c_i x_i \xi}_{\tau}\le2\pi \sum_{i \in J} |c_i|  \rho \leq 2\pi D \rho \leq \frac{1}{3}\nu^2 \delta.$$
Hence, we obtain
$$\sum_{\xi \in \Gamma} \prod_{i\in I} \abs{\widehat{1_A}(c_i \xi)} \abs{1 - e_p(y \xi)} \leq \abs{\Gamma} \cdot \frac{1}{3}\nu^2 \delta \leq \frac{1}{3} \delta.$$

If $\xi \notin \Gamma$, then we trivially bound $\abs{1 - e_p(y \xi)} \leq 2$. By the definition of $\Gamma$, we have $\abs{\widehat{1_A}(c_s \xi)} \leq \nu$ in this case, so we obtain
$$\prod_{i \in I} \abs{\widehat{1_A}(c_i \xi)} \leq \nu \prod_{i \in I \backslash \{s\}} \abs{\widehat{1_A}(c_i \xi)}.$$
Thus by H\"{o}lder's inequality, we have
$$\sum_{\xi \notin \Gamma} \prod_{i \in I} \abs{\widehat{1_A}(c_i \xi)} \abs{1 - e_p(y \xi)} \leq 2 \nu \sum_{\xi \in \FF_p} \prod_{i \in I \backslash \{s\}} \abs{\widehat{1_A}(c_i \xi)} \leq 2 \nu\prod_{i \in I \backslash \{s\}} \left(\sum_{\xi \in \FF_p} \abs{\widehat{1_A}(c_i \xi)}^{|I|- 1}\right)^{1 / (|I| - 1)}.$$
For each $i \in I \backslash \{s\}$, by Parseval's identity and the assumption $|I| \geq 3$, we have
$$\sum_{\xi \in \FF_p} \abs{\widehat{1_A}(c_i \xi)}^{|I|- 1} \leq \sum_{\xi \in \FF_p} \abs{\widehat{1_A}(c_i \xi)}^{2} \leq 1.$$
So we conclude that
$$\sum_{\xi \notin \Gamma} \prod_{i \in I} \abs{\widehat{1_A}(c_i \xi)} \abs{1 - e_p(y \xi)} \leq 2 \nu \leq \frac{1}{3} \delta.$$
Summing both components together, we obtain
$$\abs{N_1 - N_2} \leq \frac{2}{3} \delta p^{|I| - 1},$$
so we have
$$N_2 \geq \frac{1}{3} \delta p^{|I| - 1}.$$

Therefore, the number of solutions to
$$\sum_{i \in I} c_i x_i = y$$
in $A^I$ is at least $\frac{1}{3} \delta p^{|I| - 1}$. By Lemma \ref{thm:upper--bound}, the number of solutions in $A^I$ with two equal coordinates is at most $k^2 p^{|I| - 2}$, and the number of solutions in $A^I$ with at least one coordinate in $\{x_j: j \in J\}$ is also at most $k^2 p^{|I| - 2}$. We conclude that, when $p > 6k^2 \delta^{-1}$, there is at least one solution to $\sum_{i \in I} c_i x_i = y$ in $A^I$ with pairwise distinct coordinates, and all coordinates distinct from  $\{x_j: j \in J\}$. Thus we have
$$\sum_{i \in I} c_i x_i = y = - \sum_{i \in J} c_i x_i$$
so we have
$$\sum_{i = 1}^k c_i x_i = 0$$
and the $x_i$'s are pairwise distinct elements of $A$. This contradicts the assumption that $A$ is $\cL$--solution--free. 
\end{poc}

We now set
$$C = C(\eps, \cL) \coloneqq \max\left( 6k^2 \delta^{-1}, 2k \ceil{2\rho^{-1}}^{\nu^{-2}}\right).$$
It is straightforward to check that $C$ depends only on $\eps$ and $\cL$. We now check that the chromatic number of $\Cay(\FF_p,A)$ is bounded by $C$.

If $p \leq 6k^2 \delta^{-1}$, then we have
$$\chi\!\left(\Cay(\FF_p,A)\right)\le p \le 6k^2 \delta^{-1} \leq C.$$
Now assume that $p > 6k^2 \delta^{-1}$. Let $M\coloneqq\lceil 2\rho^{-1}\rceil$ and partition the unit circle $\SSS^1$ into arcs
$\mathcal I_0,\dots,\mathcal I_{M-1}$ of equal length $2\pi/M$.
Define a partition $\kappa:\FF_p\to [M]^{\Gamma}$ as follows.
For each $u\in\FF_p$ and $\xi\in\Gamma$, let $\kappa_\xi(u)\in\{0,\dots,M-1\}$ be the unique index such that
$e_p(\xi u)\in \mathcal I_{\kappa_\xi(u)}$, and set
\[
\kappa(u)\coloneqq\bigl(\kappa_\xi(u)\bigr)_{\xi\in\Gamma}.
\]
Therefore the number of parts satisfies
\[
|\mathrm{im}(\kappa)| \le  M^{|\Gamma|}
 \le  \bigl\lceil 2\rho^{-1}\bigr\rceil^{\nu^{-2}}.
\]
For each $\ba=(a_\xi)_{\xi\in\Gamma} \in \mathrm{im}(\kappa)$, let $V_\ba = \kappa^{-1}(\ba)$. Then $V_\ba$, $\ba\in \mathrm{im}(\kappa)$, partition $\FF_p$. 
Let $\Cay(\FF_p,A)[V_\ba]$ be the subgraph of $\Cay(\FF_p,A)$ induced by $V_\ba$. If $v_2$ is an in-neighbor of $v_1$ in $\Cay(\FF_p,A)[V_\ba]$, then $v_1-v_2\in A$; and also
$$e_p(\xi v_1),e_p(\xi v_2)\in \cI_{a_{\xi}}\implies \norm{\xi (v_1 - v_2)}_{\tau} \leq \frac{2}{M}\le \rho,\quad\forall \xi\in\Gamma.$$
This entails $v_1 - v_2 \in A \cap B$. By Claim \ref{cl:AcapB}, we have $\abs{A \cap B} < k$, so each $v_1 \in V_{\ba}$ has at most $(k-1)$ in-neighbors. Symmetrically, each $v_1 \in V_{\ba}$ has at most $(k-1)$ out-neighbors. We conclude that $\Cay(\FF_p,A)[V_\ba]$ has maximum degree at most $2(k - 1)$, hence by Brooks' theorem (see \cite{brooks1941colouring}) we have
$$\chi\left(\Cay(\FF_p,A)[V_\ba]\right) \leq 2k - 1.$$
Summing over $\ba$, we conclude that
$$\chi\left(\Cay(\FF_p,A)\right) \leq \sum_{\ba \in \mathrm{im}(\kappa)}\chi\left(\Cay(\FF_p,A)[V_\ba]\right) \leq (2k - 1)\bigl\lceil 2\rho^{-1}\bigr\rceil^{\nu^{-2}} \leq C$$
as desired.
\end{proof}

\section{Generalized Kneser graphs}\label{sec:gen-kneser}

The proof of the implication $(a) \Rightarrow (b)$ in
Theorem~\ref{thm:main0} requires explicit constructions of dense sets whose
associated Cayley graphs have large chromatic number. In this section, we introduce a variant of the Kneser graph (Definition~\ref{def:gen-kneser}) tailored to our
additive setting. Our main result in this section is a chromatic lower bound on such Kneser graphs obtained by topological methods, see \cref{thm:chi-lb}. The key feature of this construction is that it admits a natural embedding into the underlying undirected graph of a Cayley graph on $\ZZ_{m+1}^n$ generated by a Hamming ball around the all-ones vector, allowing us to prove \cref{thm:cayley}.

\subsection{A generalized Kneser graph and its chromatic number}\label{sec:gen-Kneser-defn}

We begin by defining the generalized Kneser graph that will serve as our main
combinatorial object. Roughly speaking, vertices encode \emph{ordered}
collections of pairwise disjoint $k$-subsets of $[n]$. The ordering is essential:
it allows us to impose a cyclic structure that will later correspond to addition
in $\ZZ_{m+1}$ and to the geometry of Hamming balls.

\begin{definition}\label{def:gen-kneser}
Fix integers $n,m,k\ge 1$ such that $n\ge(m+1)k$. We define the
\emph{generalized Kneser graph} $\mathrm{KN}(n,k,m)$ as follows. Let
\[
V(\mathrm{KN}(n,k,m))\coloneqq
\left\{(A_1,\dots,A_m)\in\binom{[n]}{k}^{m}:\;
A_i\cap A_j=\varnothing~\text{for all }i\neq j\right\}.
\]
For vertices $\mathcal A=(A_1,\dots,A_m)$ and
$\mathcal B=(B_1,\dots,B_m)$ we declare $\mathcal A\sim\mathcal B$ if either of
the following holds:
\begin{enumerate}[label=\textup{$(\mathrm{A}\arabic*)$.}, ref={$(\mathrm{A}\arabic*)$}]
\item\label{it:adj1}
$\left(\bigcup_{\ell=1}^i A_\ell\right)\cap
\left(\bigcup_{\ell=i}^{m} B_\ell\right)=\varnothing$
for all $i\in[m]$;
\item\label{it:adj2}
$\left(\bigcup_{\ell=1}^i B_\ell\right)\cap
\left(\bigcup_{\ell=i}^{m} A_\ell\right)=\varnothing$
for all $i\in[m]$.
\end{enumerate}
\end{definition}

Note that in the special case $m=1$, this definition recovers the classical Kneser graph $\mathrm{KN}(n,k)$.

The adjacency conditions may be interpreted as prefix--suffix disjointness
requirements: condition~\ref{it:adj1} asserts that every cumulative prefix of
$\mathcal A$ avoids the complementary suffix of $\mathcal B$, while
condition~\ref{it:adj2} imposes the same requirement with the roles of
$\mathcal A$ and $\mathcal B$ reversed. Together, they ensure that adjacency is
symmetric, while encoding a directional cyclic structure that will later
correspond to addition in $\ZZ_{m+1}$.

The main result of this section is the following lower bound on the
chromatic number of $\mathrm{KN}(n,k,m)$.

\begin{theorem}\label{thm:chi-lb}
Let $n,m,k\ge 2$ such that $n\ge(m+1)k$ and $m+1=p$ is a prime. Then
\[
\chi(\mathrm{KN}(n,k,m)) > \frac{\frac{n}{p}-k}{p(p-1)}.
\]
\end{theorem}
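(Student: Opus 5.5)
Suppose $\chi(\mathrm{KN}(n,k,m))=t$, with a proper coloring $c$, and aim for a contradiction with \cref{coro:Zp-LS} when $t$ is below the stated bound. Following the Greene-style proof of Lovász's theorem, I would place $n$ points $\bx_1,\dots,\bx_n$ in general position on $\SSS^{2d-1}\subseteq\CC^d$, with $d$ chosen so that $(t-1)(p-1)<2d$, i.e.\ $2d\approx (t-1)(p-1)+1$ or $+2$. The general-position requirement is that no hyperplane-like region defined below contains too many of them. For each $\by\in\SSS^{2d-1}$ and each point $\bx_\ell$, the complex number $\langle \bx_\ell,\by\rangle$ has an argument in $\RR/2\pi\ZZ$. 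Cut the circle into $p$ half-open sectors $S_0,\dots,S_{p-1}$ of angle $2\pi/p$. Multiplying $\by$ by $\zeta$ rotates every argument by $2\pi/p$, so it cyclically shifts the sector labels. Write $T_a(\by)=\{\ell:\arg\langle\bx_\ell,\by\rangle\in S_a\}$.

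The first key step is a counting lemma from general position: for every $\by$, at most a bounded number of labels sit within a thin boundary of each sector. More precisely, I would use open sectors slightly shrunk, and say $\by$ is \emph{good for $a$} if $T_a(\by)$ contains at least $k$ labels. Then define $U_j$ to be the set of $\by$ for which some vertex $(A_1,\dots,A_m)$ of color $j$ has $A_i\subseteq T_{i}(\by)$ (interior) for $i=1,\dots,m$, with the $0$-th sector left free. These sets are open. I also add one extra set $U_{t+1}$ consisting of those $\by$ for which some sector contains fewer than $k$ labels (slightly thickened to be open).

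The covering step is immediate: if all sectors have at least $k$ labels, the point lies in some $U_j$. To bound $U_{t+1}$ from containing a full orbit, note that rotations permute the sectors. So what is needed is that it is impossible for every rotation $\zeta^a\by$ to have a sparse sector. This is the main obstacle, and it is where the quantitative loss $n/p-k$ and the factor $p(p-1)$ enter. I would handle it by choosing the $\bx_\ell$ as $p$ duplicated copies, or, more cleanly, by dropping $U_{t+1}$ and using pigeonhole. Some $p$-sector always carries at least $n/p$ labels, but all must carry $k$. So instead I would coarsen: apply general position in $\CC^d$ so that any open half-space-type sector set of $\by$ contains at least about $(n-2d\cdot\text{const})/p$ labels. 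General position gives that degenerate configurations have at most $2d-1$ points on a real hyperplane, which yields each sector having at least $n/p-O(d)$ points. The requirement $n/p-O(p\,t)\ge k$ then gives exactly a bound of the form $t>(n/p-k)/(p(p-1))$. I expect calibrating this "every sector is rich" statement via a generic equidistribution choice (e.g.\ points on a moment-type curve) to be the delicate part.

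Finally, with only $U_1,\dots,U_t$ covering and $t(p-1)<2d$ (adjusting $d$), \cref{coro:Zp-LS} gives a color $j$ and an orbit $\by,\zeta\by,\dots$ all in $U_j$. Take witnesses $\mathcal A$ at $\by$ and $\mathcal B$ at $\zeta\by$. Since rotation shifts sectors by one, $B_\ell\subseteq T_{\ell+1}(\by)$ up to the orientation convention, while $A_\ell\subseteq T_\ell(\by)$. Sectors are disjoint, so for each $i$, $\bigcup_{\ell\le i}A_\ell\subseteq T_1\cup\dots\cup T_i$ is disjoint from $\bigcup_{\ell\ge i}B_\ell\subseteq T_{i+1}\cup\dots\cup T_{p-1}\cup T_0$. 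This is condition \ref{it:adj1}, so $\mathcal A\sim\mathcal B$ is a monochromatic edge, which is the desired contradiction.
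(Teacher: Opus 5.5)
There is a genuine gap, at the step you flag as delicate. The proof needs that, for a suitable configuration, every one of the $p$ sectors determined by \emph{every} $\by\in\SSS^{2d-1}$ contains at least $k$ labels. Your justification (``general position \dots\ yields each sector having at least $n/p-O(d)$ points'') is false. General position only limits how many labels lie on a sector \emph{boundary}, since each boundary lies in a real hyperplane through $0$. It gives no lower bound on the number of labels inside an open sector. For example, generic points clustered in a small cap are in general position, yet for $\by$ near that cap they all fall into one sector. Pigeonhole only makes \emph{some} sector rich, and you do not actually supply a moment-curve or Gale-type construction.

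Nor can the extra set $U_{t+1}$ absorb the bad points. As you note, $\by\mapsto\zeta\by$ merely permutes the sector counts, so $U_{t+1}$ is $\ZZ_p$-invariant. It therefore contains a full orbit as soon as it is nonempty, and then \cref{coro:Zp-LS} gives nothing. This is exactly where Greene's $p=2$ argument fails to transfer: for $p\ge3$, two sparse sectors do not force many labels onto a boundary.

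The missing idea, which is what the paper does, is to make the labelled configuration itself $\ZZ_p$-invariant. Take a set $X$ of $k+2pd$ base points on $\SSS^{2d-1}$ such that:
\begin{itemize}
\item the points lie in distinct $\ZZ_p$-orbits;
\item no real hyperplane through $0$ contains more than $2d$ of them.
\end{itemize}
Use all rotated copies $\zeta^a\bx$ ($\bx\in X$, $a\in\ZZ_p$) as labels, padded arbitrarily to $n$ points; this requires $p(k+2pd)\le n$.

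Now fix any $\by$. The sector boundaries lie in at most $p$ real hyperplanes, so at most $2pd$ base points meet a boundary, leaving at least $k$ base points off all boundaries. For each such $\bx$, the arguments of $\langle\zeta^a\bx,\by\rangle$, $a\in\ZZ_p$, are the argument of $\langle\bx,\by\rangle$ shifted by all multiples of $2\pi/p$. Hence the orbit of $\bx$ puts exactly one label into \emph{each} sector. So every sector is rich for every $\by$, and your $U_1,\dots,U_t$ already cover the sphere with no extra set. The bookkeeping $p(k+2pd)\le n$ and $(t-1)(p-1)<2d$, with $d=\lfloor \frac{n/p-k}{2p}\rfloor$, then gives exactly $\chi>\frac{n/p-k}{p(p-1)}$.

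If your parenthetical ``$p$ duplicated copies'' meant these rotated copies, that is the right repair. But it must replace the general-position claim, and the counting above is the substance of the proof. Your concluding step, with witnesses at $\by$ and $\zeta\by$ satisfying \ref{it:adj1}, is correct as written.
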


\subsection{Embedding into Cayley graphs: Proof of~\cref{thm:cayley}}

Recall that the $p=2$ case was observed by Kříž~\cite{kvrivz1987large} and Ruzsa~\cite{ruzsa1985difference}. From now on we suppose $p\geq3$.

We first prove the lower bound on the chromatic number. Let $k= np^{-1}-\sqrt{n}p^{-1}$. We will embed $G=\mathrm{KN}(n,k,p-1)$ into the underlying undirected graph of $\Cay(\ZZ_p^n,S)$, and hence the lower bound \cref{thm:chi-lb} on $\chi(G)$ transfers to the Cayley graph. Therefore, one has
    \[
    \chi\!\left(\Cay(\ZZ_p^n,S)\right) \ge \chi(G) > \frac{\sqrt{n}}{p^3}.
    \]
    
    To this end, we associate each vertex $\mathcal{A}$ of $G$ to a vector in $\ZZ_p^n$ as
    \[
    \bx_\mathcal{A}=1\cdot\bm{1}_{A_1}+\dots+(p-1)\cdot\bm{1}_{A_{p-1}}.
    \]
    Fix an edge $\mathcal A\mathcal B\in E(G)$, where
    $\mathcal A=(A_1,\dots,A_{p-1})$ and $\mathcal B=(B_1,\dots,B_{p-1})$.
    Write
    \[
    A_0\coloneqq[n]\setminus\bigcup_{i=1}^{p-1} A_i
    \quad\text{and}\quad
    B_0\coloneqq[n]\setminus\bigcup_{i=1}^{p-1} B_i.
    \]
    Assume that condition~\ref{it:adj1} holds.  We first prove the following observation.

    \begin{claim}\label{ob:inter}
        We have $|A_0\cap B_{p-1}|=k$ and $|A_i\cap B_{i-1}|\geq(p+1)k-n$
        \footnote{The argument holds also when $(p+1)k-n$ is negative.}
        for all $i\in[p-1]$. 
    \end{claim}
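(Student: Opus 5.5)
Both assertions come from instantiating condition~\ref{it:adj1} at the right indices and then counting via inclusion--exclusion. Throughout, $A_0,\dots,A_{p-1}$ partition $[n]$, and so do $B_0,\dots,B_{p-1}$. Each $A_i$ and $B_i$ with $i\ge1$ has size $k$, so $|A_0|=|B_0|=n-(p-1)k$.

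For the first assertion, take $i=p-1$ in~\ref{it:adj1}. This gives $\bigl(\bigcup_{\ell=1}^{p-1}A_\ell\bigr)\cap B_{p-1}=\varnothing$. Hence $B_{p-1}\subseteq [n]\setminus\bigcup_{\ell\ge1}A_\ell=A_0$, so $|A_0\cap B_{p-1}|=|B_{p-1}|=k$.

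For the second assertion, fix $i\in[p-1]$. I would show that $A_i$ misses $B_\ell$ for every $\ell\ge i$. Indeed, $A_i\subseteq\bigcup_{\ell=1}^i A_\ell$, and by~\ref{it:adj1} at index $i$ this union is disjoint from $\bigcup_{\ell=i}^{p-1}B_\ell$. Therefore $A_i\subseteq B_0\cup B_1\cup\dots\cup B_{i-1}$.

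If $i=1$, this already gives $A_1\subseteq B_0$, so $|A_1\cap B_0|=k$. This is at least $(p+1)k-n$, since $n\ge pk$.

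For $i\ge2$, I would peel off the earlier blocks $B_1,\dots,B_{i-2}$ and the part of $B_0$ they do not account for. The crude count $|A_i\cap B_{i-1}|\ge |A_i|-|B_0|-\sum_{\ell=1}^{i-2}|B_\ell|$ looks too weak, so instead I would use~\ref{it:adj1} at index $i-1$. It says $A_i\cap\bigl(\bigcup_{\ell=1}^{i-1}A_\ell\bigr)$ is empty (automatic) and, more usefully, that $\bigcup_{\ell\le i-1}A_\ell$ misses $B_{i-1},\dots,B_{p-1}$. I expect that the union bound alone is insufficient and a global count is needed.

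The global count goes as follows. Let $T=\bigcup_{\ell=i}^{p-1}B_\ell$. From~\ref{it:adj1} at index $i$, we get $T\cap\bigcup_{\ell\le i}A_\ell=\varnothing$. Hence $T\subseteq A_0\cup A_{i+1}\cup\dots\cup A_{p-1}$.

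Now I would bound $|A_i\cap B_{i-1}|$ by counting inside $[n]$. We have $A_i\subseteq B_0\cup\dots\cup B_{i-1}$, and $|A_i\cap B_0|\le|B_0\setminus T'|$ for a suitable $T'$. The cleanest route is
\[
|A_i\cap B_{i-1}|\ \ge\ |A_i|-|A_i\cap B_0|-\sum_{\ell=1}^{i-2}|A_i\cap B_\ell|.
\]
For $1\le\ell\le i-2$, each $B_\ell$ with $\ell<i-1$ lies in the suffix $\bigcup_{j\ge \ell}B_j$. By~\ref{it:adj1} at index $\ell$, that suffix misses $A_1,\dots,A_\ell$. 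This alone does not kill $A_i\cap B_\ell$. So I would instead use the simple estimate $|A_i\cap(B_0\cup\dots\cup B_{i-2})|\le n-|T\cup B_{i-1}|-|\bigcup_{\ell<i}A_\ell\setminus(B_0\cup\dots\cup B_{i-2})|$, which amounts to comparing sizes of complements.

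Concretely, $A_i$ and $T\cup B_{i-1}$ are both subsets of $[n]$, so $|A_i\cap(T\cup B_{i-1})|\ge |A_i|+|T\cup B_{i-1}|-n=k+(p-i+1)k-n$. Since $A_i\cap T=\varnothing$, this is a lower bound for $|A_i\cap B_{i-1}|$. For $i\ge2$ it yields $(p-i+2)k-n$, which is weaker than the claim, so further care is needed.

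The main obstacle is exactly this step. The target bound $(p+1)k-n$ corresponds to the union $A_i\cup B_{i-1}$ missing a set of size $pk-\dots$, i.e.\ one needs $A_i\cup B_{i-1}$ to avoid roughly $(p-1)k$ further elements. I would try to show that $A_i\cup B_{i-1}$ is disjoint from $\bigcup_{\ell\ne i}A_\ell\cap\bigcup_{\ell\ne i-1}B_\ell$-type sets of total size $\ge (p-1)k-$slack. This would use~\ref{it:adj1} at both indices $i-1$ and $i$, together with the fact that a complement has size at most $n-|A_i\cup B_{i-1}|$. If that fails, I would fall back on induction on $i$, starting from the $i=1$ case. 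The inductive step would show $|A_i\cap B_{i-1}|\ge |A_{i-1}\cap B_{i-2}|-(n-pk)$, using that the elements of $B_{i-2}$ not in $A_{i-1}$ force elements of $A_i$ into $B_{i-1}$.
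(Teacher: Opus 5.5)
Your proofs of $|A_0\cap B_{p-1}|=k$ and of the case $i=1$ are correct and match the paper. The gap is the case $2\le i\le p-1$, which is the real content of the claim, and the proposal never proves it. Each estimate you actually carry out falls short of the claim by exactly $(i-1)k$:
\begin{itemize}
\item the inclusion--exclusion inside $[n]$ gives only $(p-i+2)k-n$;
\item your ``simple estimate'' subtracts $\bigl|\bigcup_{\ell<i}A_\ell\setminus(B_0\cup\dots\cup B_{i-2})\bigr|$, which is $0$ by \ref{it:adj1} at index $i-1$, so it is vacuous (the term that belongs there is the part of $\bigcup_{\ell<i}A_\ell$ \emph{inside} $B_0\cup\dots\cup B_{i-2}$, which is all $(i-1)k$ of it);
\item the fallback induction, even granting its step, yields only $|A_i\cap B_{i-1}|\ge k-(i-1)(n-pk)$.
\end{itemize}
The last bound is strictly weaker than the claim for $i\ge 3$ whenever $n>pk$, which is the case in the application, where $n-pk=\sqrt n$. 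So none of the routes you list reaches $(p+1)k-n$.

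The missing step is small, and you already stated both ingredients. First, \ref{it:adj1} at index $i-1$ says that $\bigcup_{\ell<i}A_\ell$ misses $\bigcup_{\ell\ge i-1}B_\ell$. Second, $A_i$ is disjoint from $\bigcup_{\ell<i}A_\ell$. What is needed is to count in the right ambient set. Let $W\coloneqq[n]\setminus\bigcup_{\ell=1}^{i-1}A_\ell$, so $|W|=n-(i-1)k$. Both $A_i$ and $\bigcup_{\ell=i-1}^{p-1}B_\ell$ lie in $W$, hence
\[
\Bigl|A_i\cap\bigcup_{\ell=i-1}^{p-1}B_\ell\Bigr|\ \ge\ k+(p-i+1)k-\bigl(n-(i-1)k\bigr)=(p+1)k-n .
\]
Since $A_i$ misses $\bigcup_{\ell\ge i}B_\ell$ by \ref{it:adj1} at index $i$, the left side equals $|A_i\cap B_{i-1}|$. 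This is the paper's argument in complementary form. There one observes that $\bigcup_{\ell<i}A_\ell$ and $A_i\setminus B_{i-1}$ are disjoint subsets of $\bigcup_{\ell=0}^{i-2}B_\ell$, a set of size $n-(p-i+1)k$. Hence $|A_i\setminus B_{i-1}|\le n-pk$.
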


    \begin{poc}
        We know that $B_{p-1}\subseteq A_0$ since $B_{p-1}\cap\left(\bigcup_{\ell=1}^{p-1}A_\ell\right)=\varnothing$. This implies that $|A_0\cap B_{p-1}|=|B_{p-1}|=k$. Similarly, $A_1\subseteq B_0$ and thus $|A_1\cap B_0|=k\geq(p+1)k-n$.

        For $2\leq i\leq {p-1}$, it suffices to prove that $|A_i\setminus B_{i-1}|\leq n-pk$. By condition~\ref{it:adj1}, we have
        \[
        \bigcup_{\ell=1}^iA_\ell\subseteq\bigcup_{\ell=0}^{i-1}B_\ell,\qquad
        \bigcup_{\ell=1}^{i-1}A_\ell\subseteq\bigcup_{\ell=0}^{i-2}B_\ell.
        \]

        hence
        \[
        \left(\bigcup_{\ell=1}^{i-1}A_\ell\right)\sqcup(A_i\setminus B_{i-1})\subseteq\bigcup_{\ell=0}^{i-2}B_{\ell}.
        \]
        We have
        \[
        |A_i\setminus B_{i-1}|\leq\left|\bigcup_{\ell=0}^{i-2}B_{\ell}\right|-\left|\bigcup_{\ell=1}^{i-1}A_\ell\right|=(n-(p-i+1)k)-(i-1)k=n-pk,
        \]
        as desired.
    \end{poc}

    Writing the difference coordinatewise according to the partitions
    $[n]=A_0\sqcup A_1\sqcup\cdots\sqcup A_{p-1}$ and
    $[n]=B_0\sqcup B_1\sqcup\cdots\sqcup B_{p-1}$, we obtain
    \[
    \bx_\mathcal{A}-\bx_\mathcal{B}
    =(0\cdot\bm{1}_{A_0}-(p-1)\cdot\bm{1}_{B_{p-1}})
    +(1\cdot\bm{1}_{A_1}-0\cdot\bm{1}_{B_0})
    +\cdots
    +((p-1)\cdot\bm{1}_{A_{p-1}}-(p-2)\cdot\bm{1}_{B_{p-2}}).
    \]
    In particular, for every coordinate $j\in A_0\cap B_{p-1}$ we have
    \[
    (\bx_\mathcal{A}-\bx_\mathcal{B})_j  = 0-(p-1)  \equiv 1 \pmod{p},
    \]
    and for every $i\in[p-1]$ and every coordinate $j\in A_i\cap B_{i-1}$ we have
    \[
    (\bx_\mathcal{A}-\bx_\mathcal{B})_j  = i-(i-1)  \equiv 1 \pmod{p}.
    \]
    Therefore, on the coordinates in
    \[
    (A_0\cap B_{p-1})\ \cup\ \bigcup_{i=1}^{p-1} (A_i\cap B_{i-1}),
    \]
    the vector $\bx_\mathcal{A}-\bx_\mathcal{B}$ agrees with the all-ones vector $\bm 1$. It follows from Claim~\ref{ob:inter} that
    \[
    \mathrm{d}(\bx_\mathcal{A}-\bx_\mathcal{B},\bm 1)
     \le 
    n-\Bigl(|A_0\cap B_{p-1}|+\sum_{i=1}^{p-1} |A_i\cap B_{i-1}|\Bigr) \le 
    n-\Bigl(k+(p-1)\bigl((p+1)k-n\bigr)\Bigr)
    =
    pn-p^2k.
    \]
    By the choice of $k$, we have $\bx_\mathcal{A}-\bx_\mathcal{B}\in S$, and hence there is an arc from $\bx_\mathcal{B}$ to $\bx_\mathcal{A}$ in the Cayley digraph
    $\Cay(\ZZ_p^n,S)$. Similarly, if condition~\ref{it:adj2} holds, then the same conclusion holds
    with the roles of $\mathcal A$ and $\mathcal B$ reversed. Therefore, $G$ is an underlying subgraph of $\Cay(\ZZ_p^n,S)$. This gives the lower bound on the chromatic number.

It remains to construct an independent set of size linear in $p^n$. We shall pick all the elements $\bx$ that both $\bx$ 
and $-\bx$ are ``far away'' from $\bm1$, and show that the number of such elements is linear in $p^n$ via a 2D Berry--Esseen argument. For an element $\bx=(x_1,\dots,x_n)\in\ZZ_p^n$, let
\[
f(x_i)\coloneqq\begin{cases}
    \frac{p-x_i}{p-1},&x_i\neq0,\\
    0,&x_i=0,
\end{cases}\qquad \text{ and }\qquad 
f(\bx)\coloneqq \sum_{i=1}^nf(x_i),
\]
and let
\[
I\coloneqq\left\{\bx\in\ZZ_p^n:f(\bx)\leq n/2-p\sqrt n,\quad f(-\bx)\leq n/2-p\sqrt n\right\}.
\]
One checks that $f$ satisfies the triangle inequality. Hence $f(\bx-\by)\le f(\bx)+f(-\by)\le n-2p\sqrt n$ for $\bx,\by\in I$.
On the other hand, for $\bz\in S$, one has $f(\bz)\geq n-p\sqrt n$. Thus $(I-I)\cap S=\varnothing$, i.e. $I$ is an independent set.

Let $\by=(y_1,\dots,y_n)$ be uniformly random in $\ZZ_p^n$, and for each $i\in[n]$ define the two-dimensional random vector
\[
X_i\coloneqq (f(y_i),f(-y_i)).
\]
Then,
\[
X\coloneqq X_1+\dots+X_n=(f(\by),f(-\by)).
\]
To apply Corollary \ref{cor:2D-BE-convenient}, we first verify that $\norm{X_i-\EE X_i}\leq1$. Indeed, $\EE(X_i)=(1/2,1/2)$, and $X_i\in[0,1]^2$, thus it always holds that $\norm{X_i-\EE X_i}\leq\sqrt2/2$.
    
Now it suffices to control the covariance matrix of $X$. Let $\Sigma_i$ be the covariance matrix of $X_i$. A direct computation gives
\[
\Sigma_i=
\begin{bmatrix}
    \frac{p+1}{12(p-1)} &
    \frac{5-p}{12(p-1)}\\
    \frac{5-p}{12(p-1)} &
    \frac{p+1}{12(p-1)}
\end{bmatrix}
\quad\text{with eigenvalues}\quad
\lambda_{i,1}=\frac{1}{2(p-1)},\quad
\lambda_{i,2}=\frac{(p-2)}{6(p-1)},
\]
and the two eigenvalues are both positive. Let $\sigma^2=\min\{\lambda_{i,1},\lambda_{i,2}\}$, $\sigma'^2=\max\{\lambda_{i,1},\lambda_{i,2}\}$.

Since the random vectors $X_1,\dots,X_n$ are mutually independent, one has 
\[
\Sigma\coloneqq \mathrm{Cov}(X)=\sum_{i=1}^n \mathrm{Cov}(X_i)
\quad\text{with eigenvalues}\quad
\lambda_{\min}(\Sigma)= \sigma^2n,\quad
\lambda_{\max}(\Sigma)=\sigma'^2n.
\]

Now let $A=\left[-\infty,n/2-p\sqrt n\right]^2$ and let $Z$ be the Gaussian vector 
$Z\sim N(\mu,\Sigma)$, where $\mu=\EE X=(n/2,n/2)$.
By Lemma \ref{lem:gauss-rect-const}, one has
\[
\PP(Z\in A)\geq\alpha\coloneqq\alpha_p\left(\sigma^2,\sigma'^2\right).
\]
Here $\alpha>0$ is a constant independent of $n$.

By Corollary \ref{cor:2D-BE-convenient}, for sufficiently large $n$, we obtain
\[
\PP(X\in A)\geq \PP(Z\in A)-\frac{\beta}{\sigma^3}\frac{1}{\sqrt n}>\frac{\alpha}{2}.
\]
In particular, for a uniformly random $\by\in\ZZ_p^n$, the probability that
$\max\{f(\by),f(-\by)\}\leq n/2-p\sqrt n$ is at least $\alpha/2$, which implies that $|I|=\Theta\!\left(p^n\right)$.  This completes the proof of~\cref{thm:cayley}.

\subsection{Proof of \texorpdfstring{\cref{thm:chi-lb}}{Theorem chi-lb}}

\begin{proof}
Write $p=m+1$ and $G=\mathrm{KN}(n,k,p-1)$. Suppose for a contradiction that
\[
q\coloneqq\chi(G)\le \frac{\frac np-k}{p(p-1)}.
\]
Fix a proper coloring $c\colon V(G)\to[q]$ and put $t=q-1$. Then
$q\le \left\lfloor\frac{\frac np-k}{p(p-1)}\right\rfloor.$
Set
\[
\gamma\coloneqq\frac{\frac np-k}{p}
\qquad\text{and}\qquad
r\coloneqq\left\lfloor\frac{\gamma}{2}\right\rfloor.
\]
Since $q\ge1$, we have $\gamma\ge p-1\ge2$, and hence $r\ge1$. Moreover,
\[
p(k+2pr)\le n
\quad 
\text{and}\quad
t(p-1)\le \gamma-(p-1)\le \gamma-2<2r.
\]

Identify $\mathbb R^{2r}$ with $\mathbb C^r$, and let
$\zeta=e^{2\pi i/p}$. For $\bx\in\SSS^{2r-1}$ and $i\in\ZZ_p$, define
\[
V_{\bx}^{(i)}\coloneqq
\left\{\by\in\SSS^{2r-1}:
\dist(\zeta^i\bx,\by)<\dist(\zeta^{i+1}\bx,\by)
\text{ and }
\dist(\zeta^i\bx,\by)<\dist(\zeta^{i-1}\bx,\by)
\right\}.
\]
These sets are open and pairwise disjoint. Indeed, if
$\langle\by,\bx\rangle_{\mathbb C}=\rho e^{i\theta}$ with $\rho>0$, then
$\by\in V_{\bx}^{(i)}$ precisely when
\[
\theta\in
\left(\frac{2\pi i}{p}-\frac{\pi}{p},
      \frac{2\pi i}{p}+\frac{\pi}{p}\right)
\pmod{2\pi}.
\]
We shall also use the equivariance relations
\[
V_{\zeta\bx}^{(i)}=V_{\bx}^{(i+1)}
\qquad\text{and}\qquad
\by\in V_{\bx}^{(i)}\Longrightarrow
\zeta^a\by\in V_{\bx}^{(i+a)}.
\]
For $i\in\ZZ_p$, let
\[
H_{\bx}^{(i)}\coloneqq
\left\{\by\in\SSS^{2r-1}:
\dist(\zeta^i\bx,\by)=\dist(\zeta^{i+1}\bx,\by)
\right\}.
\]
Each $H_{\bx}^{(i)}$ is an equatorial sphere, and
\[
\SSS^{2r-1}\setminus\bigcup_{i\in\ZZ_p}V_{\bx}^{(i)}
\subseteq
\bigcup_{i\in\ZZ_p}H_{\bx}^{(i)}.
\]
Indeed, outside the union on the right, a closest point to $\by$ in the
orbit $\{\zeta^i\bx:i\in\ZZ_p\}$ is strictly closer than its two
neighbors, and hence $\by$ lies in the corresponding sector.

Choose a set $X\subseteq\SSS^{2r-1}$ of size $k+2pr$ such that distinct
points of $X$ lie in distinct $\ZZ_p$-orbits and no real hyperplane through
$0$ contains more than $2r$ points of $X$. Such a set exists by a generic
choice. Since $p(k+2pr)\le n$, choose distinct points
$Z=\{\bz_1,\dots,\bz_n\}\subseteq\SSS^{2r-1}$ containing
\[
\ZZ_pX\coloneqq\{\zeta^a\by:\by\in X,\ a\in\ZZ_p\}.
\]

Fix $\bx\in\SSS^{2r-1}$. At most $2pr$ points of $X$ lie in
$\bigcup_iH_{\bx}^{(i)}$, so at least $k$ points of $X$ lie in
$\bigcup_iV_{\bx}^{(i)}$. By the equivariance above, the orbit of each such
point meets every $V_{\bx}^{(i)}$ exactly once. Since these orbits are
pairwise disjoint, every $V_{\bx}^{(i)}$ contains at least $k$ points of
$\ZZ_pX$, and hence at least $k$ points of $Z$.

For $\bx\in\SSS^{2r-1}$ and $i\in\ZZ_p$, set
\[
I_{\bx}^{(i)}\coloneqq\{a\in[n]:\bz_a\in V_{\bx}^{(i)}\}.
\]
Thus the sets $I_{\bx}^{(i)}$ are pairwise disjoint and each has size at
least $k$. For every color $j\in[q]$, define
\[
U_j\coloneqq
\left\{\bx\in\SSS^{2r-1}:
\begin{array}{l}
\text{there exists }\mathcal A=(A_1,\dots,A_{p-1})\in V(G)\text{ with }c(\mathcal A)=j,\\[-1mm]
A_i\subseteq I_{\bx}^{(i)}\text{ for every }i\in[p-1]
\end{array}
\right\}.
\]
Each $U_j$ is open, since for a fixed witness the defining condition is a
finite intersection of strict inequalities depending continuously on $\bx$. Moreover, the sets $U_1,\dots,U_q$ cover the sphere:
for any $\bx$, choose $A_i\in\binom{I_{\bx}^{(i)}}{k}$ for
$i\in[p-1]$. Their pairwise disjointness makes
$(A_1,\dots,A_{p-1})$ a vertex of $G$, and its color witnesses membership
of $\bx$ in one of the $U_j$.

By Corollary~\ref{coro:Zp-LS}, with the case $t=0$ being immediate, there
exist $\bx\in\SSS^{2r-1}$ and $j\in[q]$ such that
\[
\{\bx,\zeta\bx,\dots,\zeta^{p-1}\bx\}\subseteq U_j.
\]
Let $\mathcal A=(A_1,\dots,A_{p-1})$ witness $\bx\in U_j$, and let
$\mathcal B=(B_1,\dots,B_{p-1})$ witness $\zeta\bx\in U_j$. Since
$I_{\zeta\bx}^{(\ell)}=I_{\bx}^{(\ell+1)}$, for every $i\in[p-1]$ we have
\[
\left(\bigcup_{\ell=1}^{i}A_\ell\right)
\cap
\left(\bigcup_{\ell=i}^{p-1}B_\ell\right)
\subseteq
\left(\bigcup_{\ell=1}^{i}I_{\bx}^{(\ell)}\right)
\cap
\left(\bigcup_{\ell=i}^{p-1}I_{\bx}^{(\ell+1)}\right)
=\varnothing,
\]
where the superscripts are read modulo $p$. Thus adjacency
condition~\ref{it:adj1} holds, so $\mathcal A\sim\mathcal B$. But both
vertices have color $j$, contradicting the properness of $c$.
\end{proof}

\section{Proof of~\cref{thm:main0} $(a) \Rightarrow (b)$}\label{sec:a-implies-b}

In this section we prove \cref{thm:main0} in the direction $(a) \Rightarrow (b)$. To this end, we shall construct, for any prescribed chromatic number bound $t$, a dense subset
$A\subseteq \FF_p$ that is $\cL$-solution-free but whose associated Cayley graph has
chromatic number at least $t$.
Formally it suffices to prove the following statement.

\begin{theorem}\label{thm:a-implies-b-reduction}
Let $\cL:\sum_{i=1}^k c_i x_i=0$ be a homogeneous linear equation with $k\ge 3$ and
$c_1,\dots,c_k\in\ZZ\setminus\{0\}$. Assume that there does not exist $I\subseteq [k]$ with $|I|\ge 3$
and $\sum_{i\in I} c_i=0$.
Then there exists $\varepsilon>0$, such that for every integer $t$ and sufficiently large $p$, there is an $\cL$-solution-free set $A\subseteq \FF_p$ with $|A|\ge \varepsilon p$ and 
$$\chi\!\left(\Cay(\FF_p,A)\right)\ge t.$$
\end{theorem}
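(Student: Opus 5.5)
We split into two cases according to whether $\cL$ has any nonempty zero-sum subcollection of coefficients. If it has none, we invoke \cref{thm:bckls}: there exist $\varepsilon>0$ and, for infinitely many (indeed all large) $p$, $\cL$-solution-free sets $A\subseteq\FF_p$ with $|A|\ge\varepsilon p$ and $\alpha(\Cay(\FF_p,A))=o(p)$. Since $\chi\ge p/\alpha$, the chromatic number is unbounded, which gives the claim. (If \cref{thm:bckls} only yields a sequence of primes, we would instead run the construction below, which in this case needs no extension-property argument at all.) So the real case is when some pair cancels, say $c_1+c_2=0$ after relabeling, and no zero-sum subcollection of size at least three exists; in particular every zero-sum subcollection is a pair $\{i,j\}$ with $c_i=-c_j$.

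The construction follows the two-stage plan from the introduction. Fix $t$, and choose auxiliary primes $p_1<\dots<p_n$ that are all larger than a constant $q$, with $q$ larger than $\sum_i|c_i|$. Work in $\ZZ_m\cong\prod_i\ZZ_{p_i}$. For $x\in\ZZ_m$ with coordinates $x^{(i)}$, define an asymmetric coordinate ``norm'' $N(x)=\sum_i g_i(x^{(i)})$, where $g_i$ peaks at the point $\lfloor p_i/q\rceil$; this is the analogue of the distance to $\bm 1$, scaled so that $q$-fold sums wrap around. Let $E_0=\{x: N(x)\ge n-\sqrt n\cdot O(1)\}$ be the near-maximal level set.

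We then carry out the following steps in order.
\begin{enumerate}
\item Show that $\Cay(\ZZ_m,E_0)$ contains a homomorphic copy of $\Cay(\ZZ_{q}^n,S)$ with the Hamming-ball generator of \cref{thm:cayley}, via the digit map $a\mapsto a\lfloor p_i/q\rceil$ in each coordinate. Hence $\chi\ge\sqrt n/q^3\ge t$ for $n$ large.
\item Prove a norm-separation inequality: for $x_i\in E_0$ one has $\sum_i c_ix_i\neq 0$, because each coordinate of $\sum c_ix_i$ sits near $(\sum c_i)p_i/q$ up to small errors, and $|\sum_i c_i|<q$ while the sum is nonzero.
\item Build a dense extension $F_0$ consisting of $x$ for which both the forward norm $N(x)$ and the reflected norm $N(-x)$ are small, at most $n/2-C\sqrt n$. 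A two-dimensional Berry--Esseen estimate (\cref{cor:2D-BE-convenient} together with \cref{lem:gauss-rect-const}, exactly as in the independent-set part of \cref{thm:cayley}) gives $|F_0|=\Theta(m)$.
\item Lift $E_0\cup F_0$ into $\FF_P$ for a large prime $P$, by embedding $\ZZ_m$ as a long interval or via a Freiman-isomorphic dilation, so that solutions of $\cL$ with bounded coefficients are preserved in both directions. Density survives up to a constant, and the chromatic number does not drop because $\Cay(\ZZ_m,\cdot)$ maps homomorphically.
\end{enumerate}

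The main obstacle is verifying solution-freeness of the mixed configurations. Take a putative solution with $x_i\in E_0$ for $i\in T$ and $x_i\in F_0$ for $i\notin T$. The norm gap between $E_0$ and $F_0$ should force the $E_0$-part and the $F_0$-part to vanish separately at the level of leading terms. Concretely, the contributions indexed by $T$ must form a zero-sum subcollection, as must those outside $T$. By hypothesis each such subcollection is empty or a canceling pair. Pairs inside $E_0$ give $c(x_i-x_j)=0$ with $x_i\neq x_j$, which is impossible once coefficients are invertible. We would therefore design $F_0$ with an explicit extension property: $F_0\cap(F_0+F_0)$-type and $c(F_0-F_0)$ relations with the residual $E_0$-sum are excluded by keeping $N(\pm x)$ well below $n/2$, using the triangle inequality for $N$.

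Making this leading-term decomposition rigorous is the delicate step. It requires a quantitative inequality showing that any nontrivial combination $\sum_{i\in T}c_ix_i$ with $x_i\in E_0$ has norm far above $\sum_{i\notin T}|c_i|$ times the maximal $F_0$-norm. That is where the condition $q>\sum|c_i|$ and the asymmetric choice of $g_i$ must be tuned. I would first attempt this using an $\ell^1$-type subadditivity of $N$ coordinatewise.
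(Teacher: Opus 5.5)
Your Steps 1--3 essentially reproduce the paper's construction in $\ZZ_m$. The gap is in Step 4 and in your plan for mixed configurations. The inequality you hope to prove in $\ZZ_m$ is false, and in fact $E_0\cup F_0$ is not $\cL$-solution-free there.

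Since $F_0$ is dense and unconcentrated, an $F_0$-part $\sum_{i\notin T}c_ix_i$ has no leading term. The maximal $F_0$-norm is about $n/2$, so your proposed bound $\sum_{i\notin T}|c_i|\cdot\frac n2$ exceeds the largest possible norm $n$ once $\sum_{i\notin T}|c_i|\ge3$. Zero-sum $T$ also breaks it.

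Concretely, take $x_1-x_2+x_3+x_4=0$, which satisfies the hypothesis. Your $F_0$ is symmetric, and moving one coordinate by $1$ changes $N$ by only $O(q/p_1)$. Hence the element $d$ with $d^{(1)}=1$ and $d^{(i)}=0$ for $i\ge2$ lies in both $E_0-E_0$ and $F_0-F_0=F_0+F_0$. Then $(x,\,x+d,\,y+d,\,-y)$ is a solution with distinct entries. Its $E_0$-part has coefficient sum $0$ and its $F_0$-part has sum $2$, so nothing ``vanishes separately''.

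The missing idea is \emph{scale separation inside $\FF_p$}. The paper keeps $E=\varphi(E_0)\subseteq[-\frac{m-1}{2},\frac{m-1}{2}]$ near $0$, but places $F$ in the far interval $[\frac{p}{D+1},\frac{p}{D}]$. Let $J$ be the set of $F$-indices of a putative solution and $s=\sum_{j\in J}c_j$.
\begin{itemize}
\item If $s\neq0$, the integer $\sum_{j\in J}c_jx_j$ lies in $\pm[\frac{p}{D(D+1)},\frac{(D-1)p}{D}]$. The $E$-part has absolute value at most $Dm\ll p$, so the two cannot cancel mod $p$.
\item Only when $s=0$ is information from $\ZZ_m$ needed. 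Then the hypothesis forces $J$ to be a canceling pair meeting $\{1,2\}$. Everything reduces to the single extension condition $(c_1F_0-c_1F_0)\cap(c_3E_0+\dots+c_kE_0)=\varnothing$, which is the one place the triangle inequality genuinely works.
\end{itemize}
For that condition, $F_0$ must be defined by $\norm{c_1y}_C,\norm{-c_1y}_C\le\frac n2-q^2D\sqrt n$, in the norm $\norm{\cdot}_C$ peaked at $Cp_i/q$ where $c_3E_0+\dots+c_kE_0$ is large. Defining it through $N(\pm y)$ loses a factor $|c_1|$.

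There is also a density gap. With $m$ fixed in terms of $t$ and $P\to\infty$, a single embedded copy of $E_0\cup F_0$ has density $O(m/P)\to0$, so ``density survives up to a constant'' is false. The paper instead takes the full preimage $F=\{x\in[\frac{p}{D+1},\frac{p}{D}]: x\bmod m\in F_0\}$. This gives $|F|\approx\frac{|F_0|}{m}\cdot\frac{p}{D(D+1)}=\Theta(p)$, with a constant depending only on $q$ and $D$, hence uniform in $t$.

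Finally, the interval embedding does not carry $\Cay(\ZZ_m,E_0)$ into $\Cay(\FF_P,E)$, because differences wrap around in $\ZZ_m$ but not in $\FF_P$. One restricts to $\{0,\dots,\frac{m-1}{2}\}$, where the induced graphs coincide, and loses a factor $2$ in $\chi$.
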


By \cref{thm:bckls}, if there does not exist $I\subseteq [k]$ with $\sum_{i\in I} c_i=0$, then we can find an $\cL$-solution-free set $A\subseteq \FF_p$ with $|A|\ge  \Omega(p)$ with $\alpha(\Cay(\FF_p,A))=o(p)$ and thus $\chi\!\left(\Cay(\FF_p,A)\right)\geq p/o(p)=\omega(1)$. Hence, from now on we may assume that (after a permutation of variables) 
$$c_1+c_2=0.$$ 
Moreover, if $c_i+c_j=0$ for some $i,j$, then $\{i,j\}\cap\{1,2\}\neq\varnothing$, as otherwise one has $\sum_{i\in I} c_i=0$ for $I=\{1,2,i,j\}$.

The idea is to construct such a set first in a fixed cyclic group $\ZZ_m$, where we can use a product decomposition
$\ZZ_m\cong \prod_i \ZZ_{p_i}$ to define a ``Hamming ball-like'' subset with large chromatic number, and then lift the
construction to $\ZZ_p$ for $p$ sufficiently large compared to $m$.

Throughout this section, for brevity, we let $C=c_1+\dots+c_k$ and $D =|c_1|+\dots+|c_k|$. From the conditions, $C$ is a nonzero integer. If $C\leq-1$, we may replace $c_1,\dots,c_k$ by their negatives. Hence we may assume that $C\geq1$.

\subsection{Construction in product group $\ZZ_m$}\label{subsec:construction_in_product_group}

We first prove the following lemma, which handles groups with a product structure. For two sets $E,F$, we say that $F$ is \emph{an extension of $E$} (in $\ZZ_m$) if
\[
    (-c_1F-c_2F)\cap (c_3E+\dots+c_kE)=\varnothing.
\]

Intuitively, the extension condition says that the contribution from the
$F$-variables can never cancel the contribution from the $E$-variables; in particular, there is
no $\cL$-solution with $x_1,x_2\in F$ and $x_3,\dots,x_k\in E$.

\begin{lemma}
    \label{lem:small-generate-large-cayley}
    Let $\cL$ be an equation from \cref{thm:a-implies-b-reduction}. For any $t > 0$, there exists a large integer $m$ and a subset $E_0 \subseteq \ZZ_m$ with the following properties:
    \begin{enumerate}
        \item $E_0$ is $\cL$-solution-free in $\ZZ_m$.
        \item $\Cay(\ZZ_m,E_0)$ has chromatic number at least $2t$.
        \item There exists an extension $F_0\subseteq\ZZ_m$ of $E_0$ such that $|F_0|=\Theta(m)$.
    \end{enumerate} 
\end{lemma}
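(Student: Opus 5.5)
The plan is to take an auxiliary odd prime $q>D$, and let $m=p_1\cdots p_n$ be a product of $n$ distinct primes, all much larger than $qD$. By the Chinese remainder theorem, $x\in\ZZ_m$ corresponds to residues $x_i\in\ZZ_{p_i}$, and we write $u_i(x)\coloneqq x_i/p_i\in\RR/\ZZ$. Fix a small $\eta>0$ with $D\eta\ll 1/q$. Call coordinate $i$ \emph{good} for $x$ if $\|u_i(x)-1/q\|_{\TT}\le\eta$, and set
\[
E_0\coloneqq\{x\in\ZZ_m:\ x \text{ is good on at least } n-q\sqrt n \text{ coordinates}\}.
\]
This is the ``near-maximal level set'' of a coordinate norm centered at $p_i/q$. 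The parameter $n$ is chosen at the end so that $\sqrt n/q^3\ge 2t$ and $n>kq\sqrt n$.

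\emph{Property 1.} Let $x^{(1)},\dots,x^{(k)}\in E_0$. The union of their bad coordinates has size at most $kq\sqrt n<n$, so some coordinate $i$ is good for all of them. On that coordinate,
\[
\sum_j c_j u_i(x^{(j)})\equiv C/q+O(D\eta)\pmod 1 .
\]
Since $1\le C\le D<q$, the quantity $C/q$ is at distance at least $1/q$ from $\ZZ$, so the sum is nonzero mod $1$. Hence $\sum_j c_jx^{(j)}\neq0$ in $\ZZ_m$, even allowing repeated coordinates.

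\emph{Property 2.} Define $\psi:\ZZ_q^n\to\ZZ_m$ by $\psi(\by)_i\coloneqq\lfloor \bar y_i p_i/q\rceil \bmod p_i$, which is injective. Suppose $\by'-\by\in S$, where $S$ is the Hamming ball of radius $q\sqrt n$ around $\bm1$ from \cref{thm:cayley} with $p=q$. On each coordinate where $y'_i-y_i=1$, we get $u_i(\psi(\by')-\psi(\by))=1/q+O(1/p_i)$, which is good. Hence $\psi(\by')-\psi(\by)\in E_0$, and $\psi$ is a graph homomorphism of the underlying graphs. By \cref{thm:cayley}, $\chi(\Cay(\ZZ_m,E_0))\ge\sqrt n/q^3\ge 2t$.

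\emph{Property 3.} Since $c_1+c_2=0$, we have $-c_1F-c_2F=c_1(F-F)$. Every element $e=c_3e_3+\dots+c_ke_k$ with $e_j\in E_0$ has $u_i(e)\approx C/q$ (using $\sum_{j\ge3}c_j=C$) on at least $n-kq\sqrt n$ coordinates. So it suffices to make $c_1(f-f')$ land within $O(D\eta)$ of $C/q$ in fewer than $n-kq\sqrt n$ coordinates, for all $f,f'\in F_0$.

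Mimicking the independent-set argument of \cref{thm:cayley}, I would build a function $h:\RR/\ZZ\to[0,1]$ with the following properties:
\begin{itemize}
\item $h(0)=0$;
\item $h\equiv1$ on the target set $T\coloneqq\{u:\ \|c_1u-C/q\|_{\TT}\le D\eta'\}$;
\item $h$ is subadditive: $h(u+v)\le h(u)+h(v)$;
\item $\int h<1/2$ and $\int h(-u)\,du<1/2$.
\end{itemize}
A natural candidate is a pullback of the function $f$ from \cref{thm:cayley}. Write $c_1u\approx a/q$ to round to the nearest $q$-th root. Then set $h$ to be the analogue of $(q-\bar a')/(q-1)$ for the appropriate rescaled residue $a'$, made subadditive by taking the induced subadditive hull.

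With $H(x)\coloneqq\sum_i h(u_i(x))$ and a constant $\lambda>kq$, define
\[
F_0\coloneqq\{x:\ H(x)\le n/2-\lambda\sqrt n,\ H(-x)\le n/2-\lambda\sqrt n\}.
\]
If the means fall strictly below $1/2$, the thresholds can be shifted to the means instead. Subadditivity gives
\[
H(f-f')\le H(f)+H(-f')<n-2\lambda\sqrt n .
\]
Meanwhile, any element of $c_1(F-F)\cap(c_3E_0+\dots+c_kE_0)$ would force $H\ge n-kq\sqrt n$. This contradiction shows $F_0$ is an extension of $E_0$.

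For the density, observe that for uniform $x\in\ZZ_m$ the coordinates $u_i(x)$ are independent and nearly uniform on the grids $\frac1{p_i}\ZZ_{p_i}$. So $X_i=(h(u_i),h(-u_i))$ are independent, bounded, and have covariance uniformly comparable to a fixed positive definite matrix. Then \cref{cor:2D-BE-convenient}, \cref{lem:gauss-rect-const} and \cref{fact:eig-sum} give $\PP(x\in F_0)\ge\alpha/2$ for large $n$, hence $|F_0|=\Theta(m)$.

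The main obstacle is constructing $h$. It must simultaneously be subadditive, equal $1$ on the whole target set $T$ (which for $|c_1|>1$ is a union of $|c_1|$ arcs), and have small enough mean in both orientations. The last condition also needs the covariance of $(h(u),h(-u))$ to be nondegenerate. I would first try to choose the reference $1/q$ in the definition of $E_0$ cleverly (e.g.\ center it at $a_0/q$ with $C a_0$ chosen so that $T$ becomes a single residue class), reducing to the exact $\ZZ_q$ function of \cref{thm:cayley}. Only if that fails would I fall back to a general subadditive-hull construction with direct mean estimates.
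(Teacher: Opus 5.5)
\textbf{There is a genuine gap in Property 3, the extension set.} Your Properties 1 and 2 are correct. They are a hard-window version of the paper's argument: the paper instead uses the soft condition $\norm{y}_1\ge n-q\sqrt n-1$ for an asymmetric tent norm peaking at $p_i/q$, but the discretized embedding of $\Cay(\ZZ_q^n,S)$ and the appeal to \cref{thm:cayley} are the same.

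Property 3 is the real content of the lemma, and you never construct $h$. Moreover, with $\eta$ fixed before $n\to\infty$, the requirements you list cannot all hold.
\begin{itemize}
\item The condition $\int h<\tfrac12$ is impossible for any subadditive $h$ with $h(u)=1$ somewhere. Averaging $1=h(u)\le h(v)+h(u-v)$ over $v$ gives $\int h\ge\tfrac12$, and the same holds for averages over the grid $\frac1{p_i}\ZZ_{p_i}$.
\item Suppose $h\equiv1$ on an arc $T$ of fixed length $\ell>0$ and $\int h=\tfrac12+\delta$. For $u,u'\in T$ the defects $h(v)+h(u-v)-1\ge0$ have total mass $2\delta$. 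So $h$ is $4\delta$-almost invariant in $L^1$ under every translation in $T-T\supseteq[-\ell,\ell]$, hence $O(\delta/\ell)$-close to a constant in $L^1$. This is incompatible with $h=1$ on $T$ unless $\delta\gtrsim\ell^2$.
\item Any $h$ serving your argument must be essentially $1$ on arcs of length $\gtrsim\eta/|c_1|$. This is because $c_3E_0+\dots+c_kE_0$ contains elements (e.g.\ $Cx$ with $x\in E_0$) whose coordinates sweep a window of width $\gtrsim\eta$ around $C/q$.
\item Consequently $H$ has mean at least $(\tfrac12+c(\eta))n$. Your thresholds $n/2-\lambda\sqrt n$ then sit $\Theta(n)$ below the mean, and $F_0$ has exponentially small density.
\item Your fallback ideas do not help. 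There is no nontrivial homomorphism $\ZZ_{p_i}\to\ZZ_q$ along which to pull back the $\ZZ_q$-function. Composing with rounding destroys subadditivity: two points rounding to $0$ can sum to a point rounding to $1$. Recentering $E_0$ at $a_0/q$ does not change the width of $T$.
\end{itemize}

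\textbf{The missing idea.} You should not ask for $h\equiv1$ on a set. You only need $H\ge n-O(\sqrt n)$ on the forbidden set $\{f-f': c_1(f-f')\in c_3E_0+\dots+c_kE_0\}$, and that allows a tent with a single peak.

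The paper takes, in your notation, $h(u)=g(c_1u)$, where $g(w)=\min\{\frac{q}{C}\bar w,\frac{q}{q-C}(1-\bar w)\}$ is the asymmetric tent on $\RR/\ZZ$ peaking at $C/q$. This $g$ has three properties:
\begin{itemize}
\item it is the quotient of the sublinear function $x\mapsto\max\{qx/C,\,-qx/(q-C)\}$ on $\RR$, hence subadditive;
\item its mean is $\tfrac12$, up to $O(p_i^{-2})$ on the grid;
\item $(g(w),g(-w))$ has nondegenerate covariance because $C/q\ne\tfrac12$.
\end{itemize}
The paper defines $E_0$ by the soft condition and uses the coordinatewise bound $\norm{\sum_\ell c_\ell x_\ell}^{(i)}_C\ge 1-(q-1)\sum_\ell|c_\ell|\bigl(1-\norm{x_\ell}^{(i)}_1\bigr)$. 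This gives $\norm{y}_C>n-q^2D\sqrt n$ for every $y\in c_3E_0+\dots+c_kE_0$. Subadditivity gives $\norm{c_1(f-f')}_C\le n-2q^2D\sqrt n$ for $f,f'\in F_0$. The Berry--Esseen step then goes through with thresholds a constant multiple of $\sqrt n$ below the mean.

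Your own framework can be repaired the same way. Let $\eta\lesssim 1/(qD\sqrt n)$; this is allowed, since taking $p_i\gg n$ keeps Property 2 intact. Then use this $h$: its deficit on $T$ is at most $qD\eta$ per coordinate, so $O(\sqrt n)$ in total, and the argument closes.
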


\begin{proof}
We shall choose $m$ to be a product of many large primes so that $\ZZ_m$ factors as a product
of cyclic groups. This allows us to introduce a multi-coordinate weight measuring deviation from the
endpoint regions and to construct a high-chromatic generating set $E_0$ defined by a
near-maximal total weight constraint.

    Fix a prime $q\gg D >C$.
    Take $n$ sufficiently large compared to $t,k,q,D $ and take distinct primes $p_1, \cdots, p_n$ sufficiently large compared to $n$.
    Let $m = p_1\cdots p_n$ and set
    \[
    G_0 = \ZZ_m \cong \prod_{i=1}^n \ZZ_{p_i}.
    \]
    For $y\in\ZZ_m$, we identify it with a vector $\left(y^{(1)},\dots,y^{(n)}\right)$, where $y^{(i)}$ is the image of $y$ under the projection $\ZZ_m\to\ZZ_{p_i}$ for each $i\in[n]$.

For each coordinate $i$ and each $j\in[q-1]$, we define a piecewise-linear function of
$\overline{y^{(i)}}\in\{0,\ldots,p_i-1\}$ that measures the distance to the endpoints $0$ and $p_i$, rescaled
asymmetrically with slopes depending on $j$.
More precisely, we define $\norm{\cdot}^{(i)}_j\colon\ZZ_m\to[0,1]$ as
    \[
    \norm{y}^{(i)}_j=\min\left\{\frac{q}{jp_i}\overline{y^{(i)}},
    \frac{q}{(q-j)p_i}\left(p_i-\overline{y^{(i)}}\right)\right\},\quad\forall i\in[n],~j\in[q-1].
    \]
    One readily checks that $\norm{\cdot}^{(i)}_j$ takes value in $[0,1]$ and satisfies the triangle inequality. 
    
    On $\ZZ_m$ we define
    \[
    \norm{y}_j=\sum_{i=1}^n \norm{y}^{(i)}_j.
    \]
    Let
    \[
    E_0 \coloneqq \left\{y \in \ZZ_m: \norm{y}_1\ge n-q\sqrt n-1\right\}.
    \]
Informally, $E_0$ consists of elements for which the sum of the coordinate weights is
close to its maximum, forcing most coordinates to lie in the high-weight regime.

We now verify the three required properties of $E_0$ (and the extension set $F_0$) in turn.

\noindent\textbf{$\cL$-solution-free.}
    The idea is that membership in $E_0$ forces most coordinates to sit in a narrow interval, so the linear form
    $\sum c_\ell x_\ell$ cannot wrap around to $0$.
    For $y\in E_0$, let $\delta^{(i)}(y)=1-\norm{y}^{(i)}_1\in[0,1]$. Then, from the definitions we know that
    \[
    \sum_{i=1}^n\delta^{(i)}(y)=n-\norm{y}_1\leq q\sqrt n+1.
    \]
    As $\norm{y}^{(i)}_1=\min\left\{\frac{q}{p_i}\,\overline{y^{(i)}},\;
\frac{q}{(q-1)p_i}\bigl(p_i-\overline{y^{(i)}}\bigr)\right\}
=1-\delta^{(i)}(y)$, the $i$-th coordinate of $y$ satisfies
    \[
    \frac{p_i\left(1-(q-1)\delta^{(i)}(y)\right)}{q}\leq\frac{p_i\left(1-\delta^{(i)}(y)\right)}{q}\leq \overline{y^{(i)}}\leq\frac{p_i\left(1+(q-1)\delta^{(i)}(y)\right)}{q}.
    \]
    Consequently, for $x_1,\dots,x_k\in E_0$, one has
    \[
    \frac{p_i}{q}\left(C-(q-1)\sum_{\ell=1}^k|c_\ell|\delta^{(i)}(x_\ell)\right)\leq \sum_{\ell=1}^kc_\ell \overline{x_\ell^{(i)}}\leq\frac{p_i}{q}\left(C+(q-1)\sum_{\ell=1}^k|c_\ell|\delta^{(i)}(x_\ell)\right).
    \]
    One checks that if $z\in\ZZ$, $y\in\ZZ_m$ and $z\equiv y^{(i)}\pmod{p_i}$, then
    \[
    \norm{y}^{(i)}_j\geq\min\left\{\frac{q}{jp_i}z,
    \frac{q}{(q-j)p_i}\left(p_i-z\right)\right\}.
    \]
    Indeed, when $z=\overline{y^{(i)}}$ this follows from definition, and otherwise the right hand side is non-positive. Thus using $C\ge 1$ and $q-C\ge 1$ we have
    \[
    \begin{split}
    &\norm{c_1x_1+\dots+c_kx_k}^{(i)}_{C}\\
    &\geq\min\left\{\frac{q}{Cp_i}\frac{p_i}{q}\left(C-(q-1)\sum_{\ell=1}^k|c_\ell|\delta^{(i)}(x_\ell)\right),\frac{q}{(q-C)p_i}\frac{p_i}{q}\left(q-C-(q-1)\sum_{\ell=1}^k|c_\ell|\delta^{(i)}(x_\ell)\right)\right\}\\
    &\geq1-(q-1)\sum_{\ell=1}^k|c_\ell|\delta^{(i)}(x_\ell).
    \end{split}
    \]
    Note that the above inequality is still valid if the right-hand side is negative as $\norm{\cdot}^{(i)}_C\ge 0$ by definition.
   
    Therefore,
    \begin{equation}\label{eq:sum_of_E0}
    \begin{split}
    \norm{c_1x_1+\dots+c_kx_k}_{C}&=\sum_{i=1}^n\norm{c_1x_1+\dots+c_kx_k}^{(i)}_{C}\geq n-(q-1)\sum_{\ell=1}^k|c_\ell|\sum_{i=1}^n\delta^{(i)}(x_\ell)\\
    &\geq n-(q-1)\sum_{\ell=1}^k|c_\ell|(q\sqrt n+1)=n-(q-1)D (q\sqrt n+1)\\
    &>n-q^2D \sqrt n>0.
    \end{split}
    \end{equation}
    Since $\overline{0^{(i)}}=0$ for every $i$, we have $\norm{0}^{(i)}_C=0$ and hence
$\norm{0}_C=\sum_{i=1}^n \norm{0}^{(i)}_C=0$.
We conclude from~\eqref{eq:sum_of_E0} that $c_1x_1+\dots+c_kx_k\neq0$ for all $x_1,\dots,x_k\in E_0$.

    \noindent\textbf{Large chromatic number.}
We shall embed the high-chromatic Cayley graph on $\ZZ_q^n$ in \cref{thm:cayley} into $\Cay(\ZZ_m,E_0)$ by discretizing each coordinate.
In particular, we show that the Cayley graph $\Cay\!\left(\ZZ_{q}^n,S\right)$ embeds into
    $\Cay\!\left(\ZZ_m,E_0\right)$, where
    \[
    S\coloneqq\left\{\bx\in\ZZ_q^n:\mathrm{d}(\bx,\bm{1})\leq q\sqrt n\right\}.
    \]

    To this end, define $f:\ZZ_q^n\to \prod_{i=1}^n \ZZ_{p_i}$ coordinate-wise by
    \[
    f(\bx)^{(i)}\coloneqq \left\lfloor \frac{\bx^{(i)}p_i}{q}\right\rfloor\in\ZZ_{p_i},
    \qquad i\in[n].
    \]
    Here we use $\bx^{(i)}$ for the $i$-th coordinate of a vector $\bx$. Since $p_i>q$, the map $j\mapsto \lfloor jp_i/q\rfloor$ is strictly increasing on $j\in\{0,1,\dots,q-1\}$. Hence the map $f$ is injective.

    It remains to show that adjacency is preserved.
    For an arc $(\by,\bx)$ in $\Cay\!\left(\ZZ_{q}^n,S\right)$, $\bx^{(i)}-\by^{(i)}$ is 1 for at least $n-q\sqrt n$ coordinates $i$. On each such $i$, one has
    \[
    \bigl(f(\bx)-f(\by)\bigr)^{(i)}=\left\lfloor\frac{\bx^{(i)}p_i}{q}\right\rfloor-\left\lfloor\frac{\by^{(i)}p_i}{q}\right\rfloor\in\left\{\left\lfloor\frac{p_i}{q}\right\rfloor,\left\lceil\frac{p_i}{q}\right\rceil\right\}\subseteq\ZZ_{p_i}.
    \]
    Therefore,
    \begin{equation}\label{eq:pi_large}
    \norm{f(\bx)-f(\by)}^{(i)}_1\geq\min\left\{\frac{q}{p_i}\left\lfloor\frac{p_i}{q}\right\rfloor,\frac{q}{(q-1)p_i}\left(p_i-\left\lceil\frac{p_i}{q}\right\rceil\right)\right\}>1-\frac{q}{p_i}>1-\frac{1}{n}.
    \end{equation}
    The last inequality is because $p_i>qn$ by the choice of $p_i$. Therefore,
    \[
    \norm{f(\bx)-f(\by)}_1>(n-q\sqrt n)\left(1-\frac{1}{n}\right)>n-q\sqrt n-1,
    \]
    hence $f(\bx)$ and $f(\by)$ are adjacent in $\Cay\!\left(\ZZ_m,E_0\right)$. Applying \cref{thm:cayley} with $p=q$, we conclude that
    \[
    \chi\!\left(\Cay(\ZZ_m,E_0)\right)\ge \chi\!\left(\Cay\!\left(\ZZ_{q}^n,S\right)\right)>\frac{\sqrt n}{q^3}>2t.
    \]

    \noindent\textbf{Linear size extension.}
    We define $F_0$ so that $c_1F_0-c_1F_0$ stays inside a ``small norm'' region,
    while $c_3E_0+\cdots+c_kE_0$ is forced into a ``large norm'' region by \eqref{eq:sum_of_E0};
    this makes the required intersection empty. The condition $|F_0|=\Theta(m)$ shall follow from a 2D Berry--Esseen argument.
    To be specific, let
    \[
    F_0 \coloneqq \left\{y \in \ZZ_m: \norm{c_1y}_{C}\leq n/2-q^2D \sqrt n ~\text{ and }~\norm{-c_1y}_{C}\leq n/2-q^2D \sqrt n\right\}.
    \]

    We first show that $|F_0|$ is linear in $m$. Let $y$ be uniformly random in $\ZZ_m$, and for each $i\in[n]$ define the two-dimensional random vector
\[
X_i\coloneqq\bigl(\norm{c_1y}^{(i)}_{C},~\norm{-c_1y}^{(i)}_{C}\bigr).
\]
Then, 
\[
X\coloneqq X_1+\cdots+X_n=\left(\sum_{i=1}^n \norm{c_1y}^{(i)}_{C},~\sum_{i=1}^n \norm{-c_1y}^{(i)}_{C}\right)
=\bigl(\norm{c_1y}_{C},~\norm{-c_1y}_{C}\bigr).
\]
To apply Corollary \ref{cor:2D-BE-convenient}, we first verify that
$\norm{X_i-\EE X_i}\leq1$ almost surely.
Since $\gcd(c_1,m)=1$ (as all $p_i>|c_1|$), multiplication by $c_1$ permutes $\ZZ_m$,
and hence $c_1y$ is uniformly distributed in $\ZZ_m$.
Therefore, one can compute
\[
\EE\left[\norm{c_1y}_C^{(i)}\right]=\EE\left[\norm{-c_1y}_C^{(i)}\right]=\frac12-\frac{r_i(q-r_i)}{2C(q-C)p_i^2},\quad\text{where $r_i \in [0, q-1]$ satisfies $r_i \equiv Cp_i\bmod q$}. 
\]
Consequently, let $\EE[X]=(\mu_0,\mu_0)$, then $\mu_0<n/2$, and
\[
\left(\norm{c_1y}_C^{(i)}-\EE\left[\norm{c_1y}_C^{(i)}\right]\right)^2
+
\left(\norm{-c_1y}_C^{(i)}-\EE\left[\norm{-c_1y}_C^{(i)}\right]\right)^2
\leq
2\left(\tfrac12+O\left(p_i^{-2}\right)\right)^2
<1,
\quad \forall\,y\in\ZZ_m,~i\in[n].
\]

Now it suffices to control the covariance matrix of $X$.
Since under the identification $\ZZ_m\cong\prod_{i=1}^n\ZZ_{p_i}$ the coordinates $y^{(i)}$ are independent and uniformly distributed,
the random vectors $X_1,\dots,X_n$ are mutually independent, and hence
$\mathrm{Cov}(X)=\sum_{i=1}^n \mathrm{Cov}(X_i)$.
Let $\Sigma_i$ be the covariance matrix of $X_i$. A direct computation gives
\[
\Sigma_i=
\begin{bmatrix}
    \frac{1}{12}+O\left(p_i^{-1}\right) & -\frac{7a^2-6a+1}{12(1-a)^2}+O\left(p_i^{-1}\right)\\
    -\frac{7a^2-6a+1}{12(1-a)^2}+O\left(p_i^{-1}\right) & \frac{1}{12}+O\left(p_i^{-1}\right)
\end{bmatrix},
\]
where $a\coloneqq C/q\in(0,1/3)$.
It has two eigenvalues
\[
\lambda_{i,1}=\frac{a(2-3a)}{6(1-a)^2}+O\left(\frac{1}{p_i}\right),\quad
\lambda_{i,2}=\frac{(1-2a)^2}{6(1-a)^2}+O\left(\frac{1}{p_i}\right),
\]
and the two eigenvalues are both positive.
Hence by Fact \ref{fact:eig-sum}, the covariance matrix $\Sigma_0$ of $X$ satisfies
\[
\lambda_{\min}(\Sigma_0)\geq\sum_{i=1}^n\min\{\lambda_{i,1},\lambda_{i,2}\}\geq\sigma^2n,\quad
\lambda_{\max}(\Sigma_0)\leq\sum_{i=1}^n\max\{\lambda_{i,1},\lambda_{i,2}\}\leq\sigma'^2n,
\]
where
\[
\sigma\coloneqq\min\left\{\frac{\sqrt{a(2-3a)}}{3(1-a)},\frac{1-2a}{3(1-a)}\right\},\quad
\sigma'\coloneqq\max\left\{\frac{\sqrt{a(2-3a)}}{2(1-a)},\frac{1-2a}{2(1-a)}\right\},
\]
and we use the fact that $p_i\gg1$.

Now let $A=\left[-\infty,\mu_0-2q^2D \sqrt {\mu_0}\right]^2$ and let $Z$ be the Gaussian vector $Z\sim N((\mu_0,\mu_0),\Sigma_0)$.
By the properties of Gaussian vectors
(Lemma \ref{lem:gauss-rect-const} with $r=2q^2D $, $c=\sigma^2$, $C=\sigma'^2$),
one has
\[
\PP(Z\in A)\geq\alpha\coloneqq\alpha_{2q^2D }\left(\sigma^2,\sigma'^2\right).
\]
Here $\alpha>0$ is a constant independent of $n$.

By Corollary \ref{cor:2D-BE-convenient}, we obtain
\[
\PP(X\in A)\geq \PP(Z\in A)-\frac{\beta}{\sigma^3}\frac{1}{\sqrt n}>\frac{\alpha}{2},
\]
where we use the fact that $n\gg1$.
In particular, for a uniformly random $y\in\ZZ_m$, the probability that
\[
\max\{\norm{c_1y}_C,\norm{-c_1y}_C\}\leq\mu_0-2q^2D \sqrt {\mu_0}< n/2-q^2D \sqrt n
\]
is at least $\alpha/2$,
which implies that $|F_0|=\Theta(m)$.

    It remains to verify the extension condition. On one hand, by triangle inequality of $\norm{\cdot}_C$ we have
    \[
    -c_1F_0-c_2F_0=c_1F_0-c_1F_0\subseteq\left\{y \in \ZZ_m: \norm{y}_{C}\leq n-2q^2D \sqrt n\right\}.
    \]
    On the other hand, from~\eqref{eq:sum_of_E0} we know that
    \[
    c_3E_0+\dots+c_kE_0\subseteq c_1E_0+\dots+c_kE_0\subseteq\left\{y \in \ZZ_m: \norm{y}_{C} > n-q^2D \sqrt n\right\}.
    \]
    Hence $F_0$ is indeed an extension of $E_0$ with size linear in $m$.
\end{proof}

\subsection{Proof of \cref{thm:a-implies-b-reduction}}
We now lift this construction to $\FF_p$ and prove \cref{thm:a-implies-b-reduction}. In the following we identify the elements in $\FF_p$ with $\{0,1,\dots,p-1\}$. Let $\varphi:\ZZ_m\to\FF_p$ be the map sending each residue class in $\ZZ_m$ to its representative in $\{-\frac{m-1}{2},\dots,\frac{m-1}{2}\}\subseteq\FF_p$,
and let $E = \varphi(E_0)$.
For $p\gg m$, this preserves (i) solution-freeness and (ii) the induced Cayley subgraph
on $\{-\frac{m-1}{2},\dots,\frac{m-1}{2}\}$, hence preserves the chromatic lower bound.
We then add a large extension set $F$ to form $A=E\sqcup F$ while keeping $\cL$-solution-freeness.
Let us first verify that $E$ satisfies all three conditions in Lemma \ref{lem:small-generate-large-cayley} with $\ZZ_m$ replaced by $\FF_p$ when $p \gg m$.
    
    \noindent\textbf{$\cL$-solution-free.}
    If $E$ has a distinct solution $(x_1,\dots,x_k)$ to $\cL$ in $\FF_p$, we view each $x_i$ as its representative in $\{-\frac{m-1}{2},\dots,\frac{m-1}{2}\}\subseteq\ZZ$. Then
\[
\Bigl|\sum_{i=1}^k c_i x_i\Bigr|\le \sum_{i=1}^k |c_i|\,|x_i|\le D(m-1) < p.
\]
Hence $\sum_{i=1}^k c_i x_i\equiv 0\pmod p$ forces $\sum_{i=1}^k c_i x_i=0$ over $\ZZ$.
Applying $\varphi^{-1}$ gives $c_1\varphi^{-1}(x_1)+\dots+c_k\varphi^{-1}(x_k)=0$ in $\ZZ_m$, contradicting that $E_0$ is $\cL$-solution-free.
Thus $E$ is $\cL$-solution-free.

    \noindent\textbf{Large chromatic number.}
    Let $M\coloneqq\{0,1,\dots,\tfrac{m-1}2\}$.
The subgraphs of $\Cay(\ZZ_m,E_0)$ and $\Cay(\FF_p,E)$ induced on $M$ are isomorphic.
Indeed, for $x_1,x_2\in M$, we have that $x_1$ and $x_2$ are adjacent in $\Cay(\FF_p,E)$ if and only if there exists $a\in E$
such that
\[
x_1-x_2\equiv \pm a \pmod p.
\]
Since $|x_1-x_2|\le \frac{m-1}{2}$ and $a\in E\subseteq\{-\frac{m-1}{2},\dots,\frac{m-1}{2}\}$, we have
\[
|x_1-x_2\mp a|\le |x_1-x_2|+|a|<m<p,
\]
and hence the congruence holds if and only if $x_1-x_2=\pm a$ over $\ZZ$.
Equivalently, $x_1$ and $x_2$ are adjacent in $\Cay(\ZZ_m,E_0)$.

By symmetry, the subgraphs of $\Cay(\ZZ_m,E_0)$ induced on $M$ and $-M$ are isomorphic, and since $M\cup(-M)=\ZZ_m$, one has
\[
\chi(\Cay(\FF_p,E))\geq\chi(\Cay(\ZZ_m,E_0)[M])\geq\frac12\chi(\Cay(\ZZ_m,E_0))>t.
\]
    
   \noindent\textbf{Linear size extension.}
Let
\[
I_p\coloneqq\Bigl[\frac{p}{D +1},\,\frac{p}{D }\Bigr]\cap\FF_p,
\]
and define
\[
F\coloneqq\{x\in I_p:~x \bmod m \in F_0\}.
\]
Then $|F|=\Theta(p)$ since $p\gg m$ and $|F_0|=\Theta(m)$.

We claim that $F$ is an extension of $E$ in $\FF_p$, i.e.
\[
(-c_1F-c_2F)\cap (c_3E+\cdots+c_kE)=\varnothing.
\]
Indeed, if the intersection were non-empty, then there exist $x_1,x_2\in F$ and $x_3,\dots,x_k\in E$ such that
\[
c_1x_1+c_2x_2+c_3x_3+\cdots+c_kx_k\equiv 0 \pmod p.
\]
Now we view $x_1,\dots,x_k\in I_p\cup\{-\frac{m-1}{2},\dots,\frac{m-1}{2}\}$ as integers. Using $c_2=-c_1$ and $x_1,x_2\in I_p$, we have $|x_1-x_2|\le \frac{p}{D (D +1)}$, hence
\[
\Bigl|\sum_{i=1}^k c_i x_i\Bigr|
\le |c_1|\,|x_1-x_2|+\sum_{i=3}^k |c_i|\,|x_i|
< D \cdot \frac{p}{D (D +1)}+D m
< p.
\]
Therefore the above congruence forces $\sum_{i=1}^k c_i x_i=0$ over $\ZZ$, and hence also modulo $m$.
Thus in $\ZZ_m$, $(-c_1F_0-c_2F_0)\cap (c_3E_0+\cdots+c_kE_0)\neq\varnothing$, contradicting that $F_0$ is an extension of $E_0$.

    Now we take
    \[
    A = E \sqcup F.
    \]
Then $A$ has size linear in $p$, and $\Cay(\FF_p, A)$ contains the subgraph $\Cay(\FF_p, E)$, which has chromatic number at least $t$.
It remains to check that $A$ is $\cL$-solution-free.

The idea is that any $\cL$-solution in $A=E\sqcup F$ would induce a nontrivial overlap between
a sumset coming from $F$ and a sumset coming from $E$. The interval choice for $F$ forces the total coefficient
sum over $F$-indices to be $0$, leaving only the degenerate possibilities ruled out by construction.

    Suppose to the contrary that there exist distinct $x_1,\dots,x_k\in A$ which form a solution to $\cL$.
    Let $J\subseteq[k]$ denote the set of indices for which $x_j\in F$, so that $x_j\in E$ for all $j\notin J$. Since $E$ is $\cL$-solution-free, $J\neq \varnothing$.
    Rewriting the equation $\cL$ by separating the contributions from $F$ and $E$, we obtain 
    \begin{equation}\label{eq:if}
        -\sum_{j\in J} c_j x_j\equiv \sum_{j\notin J} c_j x_j\pmod p.
    \end{equation}
    Equivalently, we have
    \[
    -\left(\sum_{j\in J}c_jF\right)\cap \sum_{j\notin J}c_jE\neq\varnothing,
    \]
   where we adopt the convention that $\sum_{j\notin J} c_j E=\{0\}$ when $J=[k]$.
 Since $F\subseteq\bigl[p/(D +1),\,p/D \bigr]$ by definition, we know that
\[
\sum_{j\in J}c_jF\subseteq\left[\sum_{j\in J,c_j>0}\frac{|c_j|p}{D +1}-\sum_{j\in J,c_j<0}\frac{|c_j|p}{D },\sum_{j\in J,c_j>0}\frac{|c_j|p}{D }-\sum_{j\in J,c_j<0}\frac{|c_j|p}{D +1}\right].
\]
This means that the set $\sum_{j\in J}c_jF$ is contained in a \emph{short} interval of length $O(p/D ^2)$: varying the $x_j\in F$ only perturbs the sum by a relatively small amount. 

Set 
$$s\coloneqq\sum_{j\in J}c_j.$$ 
We distinguish two cases depending on whether $s$ equals zero or not.

\noindent\textbf{Case 1. $s\ne 0$.} Since $x_j\in F\subseteq\left[\frac{p}{D +1},\frac{p}{D }\right]$ for $j\in J$, writing
$b\coloneqq\frac{p}{D }$ and $x_j=b-u_j$ with $0\le u_j\le b-\frac{p}{D +1}$, we have
\[
\sum_{j\in J} c_j x_j = sb-\sum_{j\in J} c_j u_j
\quad\text{and}\quad
\Bigl|\sum_{j\in J} c_j u_j\Bigr|\le \sum_{j\in J}|c_j|\Bigl(b-\frac{p}{D +1}\Bigr)\le \frac{p}{D +1}.
\]
If $s\ge 1$, then $\sum_{j\in J} c_j x_j \ge b-\frac{p}{D +1}=\frac{p}{D (D +1)}$;
if $s\le -1$, then similarly $\sum_{j\in J} c_j x_j \le -\frac{p}{D (D +1)}$.
We always have
\[
\frac{p}{D (D +1)}\le\Bigl|\sum_{j\in J} c_j x_j\Bigr|\le \frac{(D-1)p}{D}.
\]
Thus the contribution from the indices in $J$ cannot be close to $0$ modulo $p$: it is separated from $0$ by a gap of size $\gtrsim p/D ^2$. On the other hand, since $E\subseteq[-\frac{(m-1)}2,\frac{(m-1)}2]$, we have $\bigl|\sum_{j\notin J} c_j x_j\bigr|\le D m$. In other words, the $E$-part is \emph{tiny} compared to the $F$-part once $p\gg D ^2 m$. This contradicts~\eqref{eq:if}.

\noindent\textbf{Case 2. $s=0$.} By the assumption that no subset of size at least $3$ has
coefficient sum zero, this forces $J=\{u,v\}$ for some $u,v\in[k]$. Hence, we know that $\{u,v\}\cap\{1,2\}\neq\varnothing$ by previous discussion. Without loss of generality, suppose $u=1$. Then $c_v=-c_1=c_2$, and from~\eqref{eq:if} we have
\[
-c_1 x_1-c_2 x_v=-\sum_{j\in J} c_j x_j\equiv \sum_{j\notin J} c_j x_j=\sum_{j=3}^kc_jx_j-c_vx_v+c_vx_2\pmod p.
\]
By the definition of $J$, this contradicts that $F$ is an extension of $E$. 

To conclude, $A=E\sqcup F$ is $\cL$-solution-free. This completes the proof of \cref{thm:a-implies-b-reduction}.

\section{Concluding remarks}\label{sec:conclude}

We have classified the homogeneous linear equations with vanishing chromatic threshold: $\delta_\chi(\cL)=0$ exactly when some zero-sum subcollection contains at least three coefficients. The two directions expose complementary mechanisms. A balanced subequation of length at least three yields a Fourier--Bohr coloring, whereas a lone canceling pair permits dense solution-free generators with arbitrarily large chromatic number. The generalized Kneser construction and the resulting Hamming-ball theorem on $\ZZ_p^n$ are also of independent interest. Their application to separating topological and measurable recurrence in all countably infinite abelian groups requires additional dynamical machinery and is developed in an upcoming work~\cite{separatingnote}.

A natural next step is to replace the prime cyclic groups by broader families of finite abelian groups. The Fourier--Bohr argument suggests an extension when the relevant coefficients act invertibly, but the required supersaturation statement and the dependence on group torsion must be checked carefully. The converse construction is more rigid: it uses a product-group model, an odd-characteristic Kneser obstruction, and an ordered lift to a large prime cyclic group. A group-uniform classification therefore appears to require genuinely new input.

Our theorem determines only whether the threshold vanishes. The first quantitative case already appears difficult.

\begin{problem}
Determine the exact value of $\delta_\chi(x+y=z)$.
\end{problem}

\paragraph{Syndetic sets and VC dimension.}

Two further thresholds measure different forms of global structure. As in the
main theorem, we restrict throughout to prime cyclic groups. For
$A\subseteq\FF_p$, define its \emph{translative covering number} by
\[
\tau(A)\coloneqq
\min\bigl\{|T|:T\subseteq\FF_p,\ A+T=\FF_p\bigr\}.
\]
Thus $A$ is syndetic with covering number at most $C$ precisely when
$\tau(A)\le C$.

Let
$\mathcal N_A\coloneqq\{x+A:x\in\FF_p\}$
be the translate set system associated with $\Cay(\FF_p,A)$, and write
$\textup{VC}(\Cay(\FF_p,A))
\coloneqq
\textup{VC}(\mathcal N_A).$
We define
\begin{align*}
\delta_{\textup{syn}}(\cL)
&\coloneqq
\inf\Bigl\{d>0:\ \exists C=C(d,\cL)\ \text{such that, for every prime $p$
and every $\cL$-solution-free $A\subseteq\FF_p$}\\
&\hspace{33mm}
\text{with $|A|\ge dp$, one has $\tau(A)\le C$}\Bigr\},\\
\delta_{\textup{VC}}(\cL)
&\coloneqq
\inf\Bigl\{d>0:\ \exists C=C(d,\cL)\ \text{such that, for every prime $p$
and every $\cL$-solution-free $A\subseteq\FF_p$}\\
&\hspace{33mm}
\text{with $|A|\ge dp$, one has
$\textup{VC}(\Cay(\FF_p,A))\le C$}\Bigr\}.
\end{align*}

To compare these parameters with the ordinary density problem, define
\[
\pi_{\mathrm{Roth}}(\cL)
\coloneqq
\limsup_{\substack{p\to\infty\\ p\text{ prime}}}
\frac{1}{p}\max\bigl\{|A|:A\subseteq\FF_p
\text{ is $\cL$-solution-free}\bigr\}.
\]
Although the syndetic and VC-dimension thresholds appear to impose stronger
structural requirements, they coincide exactly with this ordinary extremal
density.

\begin{proposition}\label{prop:syn-vc-roth}
For every homogeneous linear equation $\cL$,
\[
\delta_{\textup{syn}}(\cL)
=
\delta_{\textup{VC}}(\cL)
=
\pi_{\mathrm{Roth}}(\cL).
\]
Consequently,
$\delta_{\textup{syn}}(\cL)=0
\ \Longleftrightarrow \ 
\delta_{\textup{VC}}(\cL)=0 \Longleftrightarrow \ 
\sum_{i=1}^k c_i=0.$
\end{proposition}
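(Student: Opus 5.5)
The plan is to prove $\pi_{\mathrm{Roth}}(\cL)\ge\delta_{\textup{syn}}(\cL)$ and $\pi_{\mathrm{Roth}}(\cL)\ge\delta_{\textup{VC}}(\cL)$ by a vacuity argument, and the reverse inequalities by sparse random deletion from near-extremal solution-free sets. The final equivalence will then follow from \cref{thm:roth}.

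\textbf{Upper bound.} Fix $d>\pi_{\mathrm{Roth}}(\cL)$. By the definition of $\limsup$ there is $p_0$ such that for every prime $p>p_0$, every $\cL$-solution-free $A\subseteq\FF_p$ has $|A|<dp$. So for $p>p_0$ the condition in the definition of $\delta_{\textup{syn}}$ and $\delta_{\textup{VC}}$ is vacuous. For $p\le p_0$ we use the trivial bounds $\tau(A)\le p\le p_0$ and $\textup{VC}(\mathcal N_A)\le\log_2 p\le p_0$. Hence $C=p_0$ works, and both thresholds are at most $\pi_{\mathrm{Roth}}(\cL)$.

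\textbf{Lower bound.} Fix $d<\pi_{\mathrm{Roth}}(\cL)$ and any target $C$. Choose $d<d'<\pi_{\mathrm{Roth}}(\cL)$ and $\eta>0$ with $(1-2\eta)d'>d$. There are infinitely many primes $p$ carrying an $\cL$-solution-free $A\subseteq\FF_p$ with $|A|\ge d'p$. Let $R\subseteq\FF_p$ contain each element independently with probability $\eta$, and set $A'\coloneqq A\setminus R$. Then $A'$ is still $\cL$-solution-free, since solution-freeness is closed under subsets. By Chernoff, $|A'|\ge dp$ with high probability. It remains to show that, for fixed $C$ and $p$ large, with high probability both $\tau(A')>C$ and $\textup{VC}(\mathcal N_{A'})>C$.
\begin{itemize}
\item \emph{Covering number.} Note that $A'+T\neq\FF_p$ iff some $x$ has $x-T\subseteq\FF_p\setminus A'$; it suffices that $x-T\subseteq R$. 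Fix $T$ with $|T|\le C$. Pick greedily $\Omega(p/C^2)$ values of $x$ whose translates $x-T$ are pairwise disjoint. The events $x-T\subseteq R$ are then independent, each of probability $\eta^{C}$. So the probability that none occurs is at most $\exp(-c\,\eta^C p/C^2)$. A union bound over the at most $p^C$ sets $T$ gives $\tau(A')>C$ with high probability.
\item \emph{VC dimension.} Take $s=C+1$. Double counting pairs $(x,X)$ with $x+X\subseteq A$ gives an $s$-set $X$ with at least $\approx d'^s p$ such $x$. Among these, keep a subfamily of size $\Omega(d'^s p/s^2)$ with pairwise disjoint translates $x+X$. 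For each $Y\subseteq X$ and each kept $x$, the event that $x+(X\setminus Y)\subseteq R$ and $(x+Y)\cap R=\varnothing$ has probability at least $\eta^{s}(1-\eta)^{s}$. On this event $(x+X)\cap A'=x+Y$. These events are independent across kept $x$. So each of the $2^s$ patterns is realized with probability $1-\exp(-\Omega_{s,\eta,d'}(p))$. Hence some translate of $X$ is shattered by translates of $A'$, up to replacing translates of $A'$ by translates of $-X$ relative to $A'$, which is symmetric in $\mathcal N_{A'}$. This gives $\textup{VC}>C$.
\end{itemize}
Since $C$ is arbitrary, neither threshold is at most $d$.

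\textbf{Main obstacle and conclusion.} The step needing the most care is the VC step. One must check that the shattering is by the family $\{x+A'\}$ and that the chosen configurations really have disjoint supports, so that independence is legitimate. I will phrase shattering as: for each $Y$ there is $x$ with $(x+X)\cap A'=x+Y$, i.e.\ the set $X$ is shattered by $\{A'-x\}$. Finally, by \cref{thm:roth}, $\pi_{\mathrm{Roth}}(\cL)=0$ iff every solution-free set has size $o(p)$, iff $\sum_i c_i=0$. This yields the stated equivalences.
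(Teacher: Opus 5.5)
Your proof is correct. The covering-number half matches the paper's argument: greedy disjoint translates $x-T$, independence, and a union bound over $T$, with your deletion probability $\eta$ playing the role of $1-\theta$. The difference is in how you handle the VC threshold.

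The paper closes a single cycle $\delta_{\textup{syn}}\le\delta_{\textup{VC}}\le\pi_{\mathrm{Roth}}\le\delta_{\textup{syn}}$. It uses vacuity only for $\delta_{\textup{VC}}$. It gets $\delta_{\textup{syn}}\le\delta_{\textup{VC}}$ from the $\varepsilon$-net theorem applied to the translates $x-A$, all of size at least $dp$, which gives a bounded hitting set $T$ with $A+T=\FF_p$. So the paper needs only one random construction, the one defeating bounded covering number, and the lower bound for $\delta_{\textup{VC}}$ is inherited.

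You instead prove both upper bounds by vacuity and both lower bounds with the same random deletion. For the VC part you add a direct shattering argument: find an $s$-set $X$ with $\Omega(p)$ translates inside $A$, pass to a disjoint subfamily, and use independence to realize every trace $Y\subseteq X$. This avoids the $\varepsilon$-net theorem entirely, keeps the proof self-contained, and shows that one random subset defeats both parameters at once. The paper's route instead yields an equation-independent structural fact: a set of density $d$ whose translate system has VC dimension at most $D$ has covering number $O_{D,d}(1)$. The price is citing the $\varepsilon$-net theorem.

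One small cleanup: your hedge ``up to replacing translates of $A'$ by translates of $-X$'' is unnecessary. The condition $(x+X)\cap A'=x+Y$ is equivalent to $X\cap(A'-x)=Y$, and $A'-x=A'+(-x)\in\mathcal N_{A'}$, so $X$ itself is shattered by $\mathcal N_{A'}$.
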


\begin{proof}
We first show that
$\delta_{\textup{syn}}(\cL)
\le
\delta_{\textup{VC}}(\cL).$
Fix $d>\delta_{\textup{VC}}(\cL)$. There exists
$D=D(d,\cL)$ such that every $\cL$-solution-free set
$A\subseteq\FF_p$ with $|A|\ge dp$ satisfies
$\textup{VC}(\mathcal N_A)\le D.$
The set systems
$\{x+A:x\in\FF_p\}$
and 
$\{x-A:x\in\FF_p\}$
have the same VC dimension, since the negation map is a bijection of their
ground sets. Every member of the latter family has size at least $dp$.
The standard $\varepsilon$-net theorem, applied with $\varepsilon=d$, therefore
gives a set $T\subseteq\FF_p$ of size $O_{D,d}(1)$ meeting every translate
$x-A$. For each $x\in\FF_p$, choose
$t\in T\cap(x-A).$
Then $t=x-a$ for some $a\in A$, and hence $x=a+t$. It follows that
\[
A+T=\FF_p.
\]
Thus $\tau(A)=O_{D,d}(1)$, proving the claimed inequality.

We next prove that
$\delta_{\textup{VC}}(\cL)
\le
\pi_{\mathrm{Roth}}(\cL).$
Fix $d>\pi_{\mathrm{Roth}}(\cL)$. By the definition of the limsup, for all
sufficiently large primes $p$ there is no $\cL$-solution-free set
$A\subseteq\FF_p$ with $|A|\ge dp$. For the finitely many remaining primes,
every set system on $\FF_p$ has VC dimension at most $p$. Hence there exists
a constant depending only on $d$ and $\cL$ that bounds the VC dimension of
every $\cL$-solution-free set of density at least $d$. Therefore
$\delta_{\textup{VC}}(\cL)
\le
\pi_{\mathrm{Roth}}(\cL).$

It remains to prove
$\pi_{\mathrm{Roth}}(\cL)
\le
\delta_{\textup{syn}}(\cL).$
Fix $d<\pi_{\mathrm{Roth}}(\cL)$, and choose constants
$d<\beta<\pi_{\mathrm{Roth}}(\cL)$
and 
$\theta\in(0,1)$
such that
$\theta\beta>d.$
There are infinitely many primes $p$ for which one can find an
$\cL$-solution-free set $B\subseteq\FF_p$ satisfying
$|B|\ge\beta p.$
Fix an arbitrary integer $K\ge1$, and choose such a prime $p$ sufficiently
large. Form a random subset $A\subseteq B$ by retaining each element of $B$
independently with probability $\theta$. Since
$\mathbb E|A|=\theta|B|\ge\theta\beta p>dp,$
a Chernoff bound gives
$\mathbb P(|A|\ge dp)=1-o(1).$
Moreover, $A$ is automatically $\cL$-solution-free because
$A\subseteq B$.

We claim that
\[
\mathbb P(\tau(A)\le K)=o(1).
\]
Fix $T\subseteq\FF_p$ with
$1\le r\coloneqq|T|\le K$.
We may greedily choose a set $X\subseteq\FF_p$ with
$|X|\ge \frac{p}{r^2}$
such that the translates $\{x-T:x\in X\}$ are pairwise disjoint. Indeed,
a fixed translate $x-T$ intersects at most
$|T-T|-1<r^2$ other translates of this form. If $A+T=\FF_p$, then
\[
A\cap(x-T)\ne\varnothing
\qquad\text{for every }x\in X.
\]
Since the sets $x-T$, $x\in X$, are pairwise disjoint and $A$ is obtained
by independently retaining elements of $B$, these events are independent.
Moreover, for every $x\in X$,
\[
\mathbb P\bigl(A\cap(x-T)\ne\varnothing\bigr)
=
1-(1-\theta)^{|B\cap(x-T)|}
\le
1-(1-\theta)^r.
\]
Consequently,
\[
\mathbb P(A+T=\FF_p)
\le
\bigl(1-(1-\theta)^r\bigr)^{p/r^2}
\le
\exp\left(-\frac{(1-\theta)^r}{r^2}p\right)
\le
\exp\left(-\frac{(1-\theta)^K}{K^2}p\right).
\]
There are at most $Kp^K$ choices for a subset $T\subseteq\FF_p$ of size at
most $K$. Hence, by the union bound,
\[
\mathbb P(\tau(A)\le K)
\le
Kp^K
\exp\left(-\frac{(1-\theta)^K}{K^2}p\right)
=o(1).
\]
Thus, for all sufficiently large primes in the chosen sequence, there exists
an $\cL$-solution-free set $A\subseteq\FF_p$ such that
$|A|\ge dp$
and 
$\tau(A)>K.$
Since $K$ was arbitrary, no uniform covering-number bound is possible at
density $d$. Hence $d$ is not admissible in the definition of
$\delta_{\textup{syn}}(\cL)$, and therefore
$\delta_{\textup{syn}}(\cL)
\ge
\pi_{\mathrm{Roth}}(\cL).$

Combining the three inequalities proves
$\delta_{\textup{syn}}(\cL)
=
\delta_{\textup{VC}}(\cL)
=
\pi_{\mathrm{Roth}}(\cL).$
The final equivalence follows from \cref{thm:roth}.
\end{proof}

Thus, unlike the chromatic threshold, the syndetic and VC-dimension
thresholds do not produce new vanishing classes: both collapse to Roth's
ordinary extremal density.

\bibliographystyle{abbrv}
\bibliography{1.bib}

\end{document}